\newtheorem{theo}{Theorem}[section]
\newtheorem{lemma}[theo]{Lemma}
\newtheorem{assume}[theo]{Assumption}
\newtheorem{propo}[theo]{Proposition}
\newtheorem{coro}[theo]{Corollary}
\newtheorem*{theo:main}{Theorem~\ref{char}}
\theoremstyle{definition}
\newtheorem{defi}[theo]{Definition}
\newtheorem{nota}[theo]{Notation}
\newtheorem{rem}[theo]{Remark}
\newtheorem{exam}[theo]{Example}
\newcommand\Mod{\operatorname{\bf Mod}}
\newcommand\Set{\operatorname{\bf Set}}
\newcommand\Met{\operatorname{\bf Met}}
\newcommand\Cat{\operatorname{\bf Cat}}
\newcommand\Str{\operatorname{\bf Str}}
\newcommand\Ab{\operatorname{\bf Ab}}
\newcommand\Ban{\operatorname{\bf Ban}}
\newcommand\Pos{\operatorname{\bf Pos}}
\newcommand\colim{\operatorname{colim}}
\newcommand\eps{\varepsilon}
\newcommand\ca{\mathcal {A}}
\newcommand\cg{\mathcal {G}}
\newcommand\ch{\mathcal {H}}
\newcommand\ce{\mathcal {E}}
\newcommand\ck{\mathcal {K}}
\newcommand\cl{\mathcal {L}}
\newcommand\cm{\mathcal {M}}
\newcommand\cp{\mathcal {P}}
\newcommand\cv{\mathcal {V}}
\newcommand{\RR}{{\mathbb R}}
\newcommand{\LL}{{\mathbb L}}
\newcommand{\TT}{{\mathbb T}}
\newcommand{\tx}{\textnormal}
\newcommand{\bo}{\mathbf}
\date{September 24, 2025}
\begin{document}
\title[Enriched positive logic]
{Enriched positive logic}
\author[J. Rosick\'{y} and G. Tendas]
{J. Rosick\'{y} and G. Tendas}
\thanks{The first author acknowledges the support of the Grant Agency of the Czech Republic under the grant 22-02964S. The second author acknowledges the support of the EPSRC postdoctoral fellowship EP/X027139/1.} 
\address{
\newline J. Rosick\'{y}\newline
Department of Mathematics and Statistics\newline
Masaryk University, Faculty of Sciences\newline
Kotl\'{a}\v{r}sk\'{a} 2, 611 37 Brno, Czech Republic\newline
\textnormal{rosicky@math.muni.cz}\newline
\newline G. Tendas\newline
Department of Mathematics\newline
University of Manchester, Faculty of Science and Engineering\newline
Alan Turing Building, M13 9PL Manchester, UK\newline
\textnormal{giacomo.tendas@manchester.ac.uk}
}
 
\begin{abstract}
Building on our previous work on enriched regular logic, we introduce an
enriched version of positive logic and relate it to enriched cone-injectivity classes and enriched accessible categories. To do this, we need a factorization system on the base of enrichment in order to interpret existential quantification and disjunctions. We will also show how to treat unique existence in enriched logic, and how to relate it to local presentability.
\end{abstract} 
\keywords{}
\subjclass{}

\maketitle

\tableofcontents

\section{Introduction}
Since the early developments of the theory, the relationship between accessible categories and logic was well understood~\cite{MP,AR}. Indeed, it was soon established that categories of models of specific fragments of logic, correspond to certain kinds of accessible categories. For instance, the categories of models of {\em cartesian} (or limit) theories are the same as complete accessible (that is, locally presentable) categories, while categories of models of (infinitary) {\em regular} theories are the same as accessible categories with products. In full generality, categories of models of {\em basic} theories are the same as accessible categories, with no limits required.

Alternatively, the same classes of categories can be described through injectivity and orthogonality conditions. Locally presentable categories are orthogonality classes in presheaf categories, accessible categories with products arise as injectivity classes, and general accessible categories as cone-injectivity classes. Such descriptions, although not being intrinsic to the category taken into account (since they are defined within a larger category), are quite useful to produce examples. 

When moving to the enriched context things become more technical, and only some of the correspondences above, involving orthogonality and injectivity, have been carried out. Indeed, locally presentable enriched categories are characterized as orthogonality classes in~\cite{K,Kel82}, and the relationship between enriched accessibility and injectivity has been first studied in~\cite{LR12}. 

From the point of view of logic no comparison was made for a long time, since a theory of {\em enriched logic} was missing from the literature. To fill this void, in~\cite{RTe} we introduced enriched equational theories with recursively generated terms, and then in~\cite{RTe2} we added relation symbols, conjunctions, and existential quantification to produce the regular fragment of enriched logic. In this last work, we characterized enriched injectivity classes as categories of models of enriched regular theories, generalizing the ordinary results. 

Here we introduce {\em positive logic} and {\em basic theories}, in the enriched framework, and characterize their categories of models in terms of enriched cone-injectivity classes (that we shall introduce) and of enriched accessible categories.

More in detail, in \cite{RTe2}, enriched regular logic is built as the logic
of positive-primitive formulas; that is, formulas built by conjunctions
and existential quantifications (both allowed to be infinitary) from atomic formulas. In order
to interpret the existential quantification, we needed an enriched factorization system $(\ce,\cm)$ on the base $\cv$ of enrichment. 
In the ordinary ($\Set$ ba\-sed) logic, $\ce$ consists of surjections and $\cm$ of injections. 

It has turned out that some
well-known properties of existential quantification depend on $\ce$.
For instance, to pass $\exists$ along $\wedge$ we need that $\ce$ is stable under pullbacks~\cite{RTe2}, which is too restrictive: it
is not valid in the important case where $\cv=\Met$ is the category of metric spaces and $\ce$ consists of dense maps. The main result of \cite{RTe2}
is that the resulting enriched regular theories are precisely theories
describing enriched $\ce$-injectivity classes in the $\cv$-category of structures.

In this paper, we add (infinitary) disjunctions to our logic. This leads us
to the logic of positive-existential formulas, which we call {\em enriched positive
logic}. In the finitary case, the resulting theories are called h-inductive (see \cite{PY} for an overview). In \cite{MP}, infinitary formulas are allowed and the resulting theories are called basic.
Again, to interpret disjunctions we make use of the fixed enriched factorization system $(\ce,\cm)$ on the base $\cv$ of enrichment. Over the category
$\Ban$ of Banach spaces and linear maps of norm $\leq 1$, our finitary
enriched positive logic captures to the logic of positive bounded formulas of \cite{I}.

In the same way as basic theories extend regular ones, $\ce$-cone-injectivity classes extend $\ce$-injectivity ones. Under some assumptions on $\ce$, basic theories precisely correspond to $\ce$-cone-injectivity classes. This generalizes the well known $\Set$-based correspondence where, moreover, we precisely capture
accessible categories. This extends to the enriched framework as well,
again under some assumptions about $\ce$:

\begin{theo:main}
	Under the assumptions of~\ref{char}, the following are equivalent for a full subcategory $\ca$ of $\Str(\mathbb L)$:
	\begin{enumerate}
		\item $\ca$ is isomorphic to $\Mod(\mathbb T)$ of models of a basic $\mathbb L$-theory $\mathbb T$;
		\item $\ca$ is closed under $\lambda$-filtered colimits, powers by $\ce$-stable $\cv$-connected objects, and $(\lambda,\ce)$-pure subobjects, for some $\lambda$;
		\item $\ca$ is accessible, accessibly embedded, and closed under powers by $\ce$-stable $\cv$-connected objects;
		\item $\ca$ is a $(\lambda,\ce)$-cone-injectivity class of $\Str(\mathbb L)$, for some $\lambda$.
	\end{enumerate}
\end{theo:main}
 
Our main examples of bases of enrichment were listed in \cite[2.3]{RTe2}
and include $\Met$ (metric spaces with distances $\infty$ allowed and
non-expanding maps as morphisms), $\Ban$ (Banach spaces  and linear maps
of norm $\leq 1$), $\Pos$ (posets and monotone maps), $\Cat$ (small
categories) and $\Ab$ (abelian groups).

In the ordinary case, the fragment of regular logic that only uses unique existential quantification (that is, limit theories) is used to characterize orthogonality classes, and locally presentable categories~\cite[5.29]{AR}. We treat an enriched analogue of this in Section~\ref{limit} and prove similar results. 

While ordinarily unique existence can be described by a regular theory; this does not extend to the enriched framework in full generality. The reason is that while ``being a monomorphism'' (which is needed to have uniqueness) is expressible in ordinary regular logic, ``being a map in $\cm$'' may not be expressible in enriched regular logic. Nonetheless, we give hypotheses on $(\ce,\cm)$ so that limit theories can be captured under the regular ones.

\section{Preliminaries}
Like in \cite{RTe2}, as our base of enrichment we fix  a symmetric monoidal closed category $\cv=(\cv_0,\otimes,I)$ which is locally $\lambda$-presentable as a closed category for some $\lambda$ (see \cite{Kel82}). This means that $\cv_0$ is locally $\lambda$-presentable and the full subcategory $\cv_\lambda$ spanned by the $\lambda$-presentable objects contains the unit and is closed under the monoidal structure of $\cv_0$. We denote by $$A^X:=[X,A]$$
the internal hom in $\cv$. For all other enriched notions we follow Kelly's book~\cite{K}.

We further assume that $\cv$ is equipped with an enriched factorization system $(\ce,\cm)$ in the sense of \cite{LW}; that is, $(\ce,\cm)$ is an orthogonal factorization system on $\cv_0$, and $\ce$ is closed under tensoring with objects of $\cv$ (equivalently, $\cm$ is closed under powers). A morphism
$f\colon A\to B$ in $\cv$ is called a \textit{surjection} if $\cv_0(I,f)$ is surjective.

An object $X\in\cv$ will be called {\em $\ce$-projective} if $\cv_0(X,-)\colon\cv_0\to\Set$ sends maps in $\ce$ to epimorphisms and it is called {\em $\ce$-stable} if $$e^X\colon A^X\to B^X$$ is in $\ce$ whenever $e\colon A\to B$ is. When the unit $I$ is $\ce$-projective, every $\ce$-stable object is $\ce$-projective.

We say that a set of objects $\cg$ of $\cv_\lambda$ is an 
$\ce$-generator for $\cv_\lambda$ if for any $X\in\cv_\lambda$ there exists $Y\in\cg$ and a map $e\colon Y\to X$ in $\ce$. 

In \cite{RTe1}, we introduced $(\lambda,\ce)$-pure morphisms in a $\cv$-category $\ck$. Although $(\lambda,\ce)$-pure morphisms do not need to be monomorphisms in general, we said that a subcategory $\cl$ of $\ck$
is closed under $(\lambda,\ce)$-pure subobjects if for any $(\lambda,\ce)$-pure $f:K\to L$ with $L\in\cl$, then also $K\in\cl$.

\section{Basic theories}
In \cite{RTe2}, we have developed enriched regular logic where only positive-primitive formulas are needed. Now, we introduce the enriched concept of disjunctions. For that, we fix an enriched factorization system $(\ce,\cm)$ on $\cv$ whose right class is made from monomorphisms and is closed in $\cv^\to$ under $\lambda$-directed colimits.

Let us begin by recalling the notion of a language $\LL$:

\begin{defi}[\cite{RTe2}]
	A (single-sorted) {\em language} $\mathbb L$ (over $\cv$) is the data of:\begin{enumerate}
		\item a set of function symbols $f\colon(X,Y)$ whose arities $X$ and $Y$ are objects of $\cv$;
		\item a set of relation symbols $R:X$, with arity an object $X$ of $\cv$.
	\end{enumerate}
	The language $\mathbb L$ is called {\em $\lambda$-ary} if all the arities appearing in $\mathbb L$ lie in $\cv_\lambda$.
\end{defi}

And of an $\LL$-structure:

\begin{defi}[\cite{RTe2}]
	Given a language $\mathbb L$, an {\em $\mathbb L$-structure} is the data of an object $A\in\cv$ together with:\begin{enumerate}
		\item a morphism $f_A\colon A^X\to A^Y$ in $\cv$ for any function symbol $f\colon(X,Y)$ in $\mathbb L$;
		\item an $\cm$-subobject $r_A\colon R_A\rightarrowtail A^X$ for any relation symbol $R\colon X$ in $\mathbb L$.
	\end{enumerate}
	A {\em morphism of $\mathbb L$-structures} $h\colon A\to B$ is determined by a map $h\colon A\to B$ in $\cv$ making the following square commute
	\begin{center}
		\begin{tikzpicture}[baseline=(current  bounding  box.south), scale=2]
			
			\node (a0) at (0,0.8) {$A^X$};
			\node (b0) at (1,0.8) {$B^X$};
			\node (c0) at (0,0) {$A^Y$};
			\node (d0) at (1,0) {$B^Y$};
			
			\path[font=\scriptsize]
			
			(a0) edge [->] node [above] {$h^X$} (b0)
			(a0) edge [->] node [left] {$f_A$} (c0)
			(b0) edge [->] node [right] {$f_B$} (d0)
			(c0) edge [->] node [below] {$h^Y$} (d0);
		\end{tikzpicture}	
	\end{center} 
	for any $f\colon(X,Y)$ in $\mathbb L$, and such that there exist a (necessarily unique) morphism $h_R\colon R_A\to R_B$ completing the diagram below
	\begin{center}
		\begin{tikzpicture}[baseline=(current  bounding  box.south), scale=2]
			
			\node (a0) at (0,0.8) {$R_A$};
			\node (b0) at (1,0.8) {$R_B$};
			\node (c0) at (0,0) {$A^X$};
			\node (d0) at (1,0) {$B^X$};
			
			\path[font=\scriptsize]
			
			(a0) edge [dashed, ->] node [above] {$h_R$} (b0)
			(a0) edge [>->] node [left] {$r_A$} (c0)
			(b0) edge [>->] node [right] {$r_B$} (d0)
			(c0) edge [->] node [below] {$h^X$} (d0);
		\end{tikzpicture}	
	\end{center} 
	for any relation symbol $R:X$ in $\mathbb L$.
\end{defi}

The $\cv$-category of $\LL$-structures is defined in~\cite[Definition~3.8]{RTe2} and its underlying category has $\LL$-structures as objects and morphisms of such as arrows.

\begin{nota}
	As in~\cite{RTe2}, variables have arities which are objects of $\cv$ and we denote them as $x:X$, for $X\in\cv$. Terms, or extended terms, are those defined in Section~4 of \cite{RTe}; these have input and output arities in $\cv$.
\end{nota}

As mentioned before, in this section we extend regular logic to the positive-existential fragment, using disjunctions. The regular part of the following definition is taken from \cite[Definition~4.2]{RTe2}.

\begin{defi}
	Given a language $\LL$, the atomic formulas of $\LL$ are defined as follows:
	\begin{enumerate}
		\item if $s,t$ are $(X,Y)$-ary terms, then 
		$$\varphi(x):=(s(x)=t(x))$$ is an $X$-ary atomic formula;
		\item if $R$ is a $X$-ary relation symbol, $Y$ is an arity, and $t$ an $(Z,X\otimes Y)$-ary term, then 
		$$\varphi(z):= R^Y(t(z))$$ is a $Z$-ary atomic formula.
	\end{enumerate}
	General formulas are built recursively from atomic formulas by taking:
	\begin{enumerate}
		\item[(3)] {\em conjunctions}: if $\varphi_j(x)$, for $j\in J$, are $X$-ary formulas, then  
		$$\varphi(x):=\bigwedge\limits_{j\in J}\varphi_j$$ is an $X$-ary formula;
		\item[(4)] {\em existential quantification}: if $\psi(x,y)$ is an $X+Y$-ary formula, then
		$$ \varphi(x):= (\exists y)\psi(x,y) $$
		is an $X$-ary formula;
		\item[(5)] {\em disjunctions}: if $\varphi_j$, for $j\in J$, are $X$-ary formulas, then
		$$
		\varphi(x):=\bigvee\limits_{j\in J}\varphi_j
		$$
		is an $X$-ary formula.
	\end{enumerate}
	If $\LL$ is $\lambda$-ary, the arities involved are $\lambda$-presentable, and $|J|<\kappa$, we get $\mathbb L_{\kappa\lambda}$-formulas. 
\end{defi}

Next we define interpretation of formulas in an $\LL$-structure. Again, the regular part of the definition is taken from~\cite[Definition~4.5]{RTe2}.

\begin{defi}
	For any $\LL$-structure $A$ and any $X$-ary formula $\varphi(x)$ we define its interpretation in $A$ as an $\cm$-subobject
	$$ \varphi_A\rightarrowtail A^X.$$
	We argue recursively as follows:
	{\setlength{\leftmargini}{1.6em}
		\begin{enumerate}
			\item If $\varphi(x):=(s(x)=t(x))$, then the interpretation $\varphi_A\rightarrowtail A^X$ is defined as the $(\ce,\cm)$-factorization of the 	the equalizer of $s_A$ and $t_A$:
			\begin{center}
				\begin{tikzpicture}[baseline=(current  bounding  box.south), scale=2]
					
					\node (a0) at (0,0) {$A_{s,t}$};
					\node (c0) at (1.8,0) {$ A^X $};
					\node (d0) at (0.9,0.5) {$\varphi_A$};
					
					\path[font=\scriptsize]
					
					(a0) edge [->] node [below] {$\tx{eq}(s_A,t_A)$} (c0)
					(a0) edge [->>] node [above] {$\ce\ \ \ $} (d0)
					(d0) edge [>->] node [above] {$\ \ \ \cm$} (c0);
				\end{tikzpicture}	
			\end{center}
			(If $\cm$ contains the regular monomorphisms, then $\varphi_A=A_{s,t}$.)
			\item If $\varphi(z):= R^Y(t(z))$, then the interpretation $\varphi_A\rightarrowtail A^Z$ is defined by the pullback
			\begin{center}
				\begin{tikzpicture}[baseline=(current  bounding  box.south), scale=2]
					
					\node (a0) at (0,0.8) {$\varphi_A$};
					\node (a0') at (0.2,0.6) {$\lrcorner$};
					\node (b0) at (1,0.8) {$A^Z$};
					\node (c0) at (0,0) {$R_A^Y$};
					\node (d0) at (1,0) {$A^{X\otimes Y}$};
					
					\path[font=\scriptsize]
					
					(a0) edge [>->] node [above] {} (b0)
					(a0) edge [->] node [left] {} (c0)
					(b0) edge [->] node [right] {$t_A$} (d0)
					(c0) edge [>->] node [below] {$r_A^Y$} (d0);
				\end{tikzpicture}	
			\end{center}
			where we composed $r_A^Y$ with the isomorphism $(A^X)^Y\cong A^{X\otimes Y}$  
			\item If $ \varphi(x):=\bigwedge_{j\in J}\varphi_j$, then $\varphi_A\rightarrowtail A^X$ is the intersection (wide pullback) of all the $(\varphi_j)_A\rightarrowtail A^X$ for $j\in J$. 
			\item If $\varphi(x):=((\exists y)\psi(x,y))$, then $\varphi_A\rightarrowtail A^X$ is given by the $(\ce,\cm)$-factorization below
			\begin{center}
				\begin{tikzpicture}[baseline=(current  bounding  box.south), scale=2]
					
					\node (a0) at (-0.1,0.8) {$\psi_A$};
					\node (c0) at (1,0.8) {$ A^{X+Y} $};
					\node (c1) at (2.1,0.8) {$ A^X$};
					\node (d0) at (1,1.3) {$\varphi_A$};
					
					\path[font=\scriptsize]
					
					(a0) edge [>->] node [below] {} (c0)
					(c0) edge [->] node [below] {$p_1$} (c1)
					(a0) edge [->>] node [above] {$\ce\ \ \ $} (d0)
					(d0) edge [>->] node [above] {$\ \ \ \cm$} (c1);
				\end{tikzpicture}	
			\end{center}
			where the bottom composite is given by the $\cm$-morphism defining $\psi_A$ and the projection on the first factor.
			\item if $\varphi(x):= \bigvee\limits_{j\in J}\varphi_j(x)$, then $\varphi_A\rightarrowtail A^X$ is given by the $(\ce,\cm)$-factorization 
			\begin{center}
				\begin{tikzpicture}[baseline=(current  bounding  box.south), scale=2]
					
					\node (a0) at (0,0) {$\textstyle\sum_{j\in J}(\varphi_j)_A$};
					\node (c0) at (1.8,0) {$ A^X $};
					\node (d0) at (0.9,0.5) {$\varphi_A$};
					
					\path[font=\scriptsize]
					
					(a0) edge [->] node [below] {} (c0)
					(a0) edge [->>] node [above] {$\ce\ \ \ $} (d0)
					(d0) edge [>->] node [above] {$\ \ \ \cm$} (c0);
				\end{tikzpicture}	
			\end{center}
			of the morphism induced by the interpretations $(\varphi_j	)_A\rightarrowtail A^X$ of $\varphi_j$, $j\in J$, in $A$.
	\end{enumerate}}
\end{defi}	

\begin{nota}
	For a fixed arity $X$, the empty conjunction is denoted by $\top$, and means \textit{true}, while the empty disjunction is denoted as $\bot$, and means \textit{false}. For an $\LL$-structure $A$ we always have $\top_A=A^X\xrightarrow{1}A^X$ and $\bot_A=\hat 0\rightarrowtail A^X$, given by the $(\ce,\cm)$-factorization of $0\to A^X$ (in most cases we actually have $\bot_A=0$).
\end{nota}

Recall that a formula is \textit{positive-primitive} if it has the form $(\exists x)\varphi$ where $\varphi$ is a conjunction of atomic formulas (see \cite[4.3]{RTe2}). 

\begin{defi}
	A \textit{positive-existential} formula is one of the form
	$$ \varphi(x):= \bigvee\limits_{j\in J}\varphi_j(x) $$
	where each $\varphi_j$ is positive-primitive.
\end{defi}

Recall that an object $X\in\cv$ is called $\cv$-connected if $(-)^X=[X,-]$ preserves coproducts.

\begin{lemma}\label{stable} 
The following properties hold:
{\setlength{\leftmargini}{1.6em}
	\begin{enumerate}
		 
		\item If $X\in\cv$ is an $\ce$-stable $\cv$-connected object, then for every positive-existential formula $\varphi$ 
		$$
		\varphi_{A^X}\cong(\varphi_{A})^X.
		$$
		\item If $A\cong\colim A_i$ is a $\lambda$-directed colimit of $\LL$-structures, then
		$$
		\varphi_A\cong\colim\varphi_{A_i}
		$$
		for every $\lambda$-ary positive-existential $\varphi$.
	\end{enumerate}}
\end{lemma}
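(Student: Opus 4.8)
The plan is to prove both statements by structural induction on the positive-existential formula $\varphi$, since the interpretation $\varphi_A \rightarrowtail A^X$ is defined recursively. By the definition of positive-existential formulas, $\varphi$ is a disjunction $\bigvee_{j\in J}\varphi_j$ of positive-primitive formulas; each $\varphi_j$ is itself of the form $(\exists y)\psi_j$ with $\psi_j$ a conjunction of atomic formulas. So the induction really has a fixed, shallow shape — atomic, then conjunction, then existential quantification, then disjunction — and I will verify the isomorphism is preserved at each of these four layers.

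For part (1), the base cases are the two kinds of atomic formula. The key inputs are that $X$ is $\cv$-connected (so $(-)^X$ preserves coproducts, hence all colimits that are built from coproducts and... more importantly preserves the relevant limits automatically as a right adjoint) and $\ce$-stable (so $(-)^X$ sends $\ce$ to $\ce$, whence it preserves $(\ce,\cm)$-factorizations, since it preserves $\cm$ automatically as $\cm$ is closed under powers). For the equality atomic formula $s(x)=t(x)$: $(-)^X$ preserves the equalizer of $s_A,t_A$ (right adjoint), and preserves the subsequent $(\ce,\cm)$-factorization by $\ce$-stability, and one needs that $(s_A)^X$, $(t_A)^X$ are the interpretations of $s,t$ in $A^X$ — this should follow from how terms are interpreted, or be cited from \cite{RTe,RTe2}. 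For the relational atomic formula $R^Y(t(z))$: the interpretation is a pullback along $r_A^Y$ and $t_A$; applying $(-)^X$ preserves this pullback, and one identifies $(R_A)^Y{}^X$ with $(R_{A^X})^Y$ and $(t_A)^X$ with $t_{A^X}$. For the inductive steps: conjunction is a wide pullback, preserved by the right adjoint $(-)^X$; existential quantification is a pushforward along a projection $p_1 : A^{X+Y}\to A^X$ followed by $(\ce,\cm)$-factorization — here I use that $(A^{X+Y})^{X'}\cong (A^{X'})^{X+Y}$ compatibly with the projections, that $(-)^{X'}$ preserves the relevant $\cm$-subobject, and then $\ce$-stability to preserve the factorization; disjunction is a coproduct $\sum_j(\varphi_j)_A$ followed by $(\ce,\cm)$-factorization, and here the $\cv$-connectedness of $X$ is exactly what's needed to pass $(-)^X$ through the coproduct, after which $\ce$-stability handles the factorization.

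For part (2), the structure is identical but now the ambient functor is $\colim_{i}(-)$ over a $\lambda$-directed category, and the hypotheses needed are: $\lambda$-directed colimits in $\cv$ commute with finite limits (true since $\cv_0$ is locally $\lambda$-presentable), commute with the $\lambda$-small limits/colimits appearing in $\lambda$-ary formulas (the arities are $\lambda$-presentable and $|J|<\kappa$ — actually here one wants the conjunctions and disjunctions to also be suitably small, which is part of being $\lambda$-ary, so the wide pullbacks and coproducts are $\lambda$-small and commute with $\lambda$-directed colimits), and crucially that the class $\cm$ is closed in $\cv^{\to}$ under $\lambda$-directed colimits — this was assumed at the start of Section~3 precisely so that a $\lambda$-directed colimit of $(\ce,\cm)$-factorizations is again an $(\ce,\cm)$-factorization (the $\cm$-part stays in $\cm$ by the stated closure, and the $\ce$-part stays in $\ce$ since $\ce$ is always closed under directed colimits in the arrow category as the left class of a factorization system). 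One also needs that the colimit $A\cong\colim A_i$ is computed at the level of underlying $\cv$-objects and that powers/homs interact correctly — but since $X$ and the arities are $\lambda$-presentable, $(-)^X$ and $(-)^{X\otimes Y}$ preserve $\lambda$-directed colimits, which is what makes $(t_A)$ etc. behave well under the colimit.

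The main obstacle I expect is the existential-quantification step in part (1): one must check that applying $(-)^X$ to the composite $\psi_A \rightarrowtail A^{X'+Y} \xrightarrow{p_1} A^{X'}$ and then $(\ce,\cm)$-factorizing yields the same $\cm$-subobject of $A^{X'}{}^X \cong (A^X)^{X'}$ as first forming $\psi_{A^X}\rightarrowtail (A^X)^{X'+Y}$ (using the already-established inductive hypothesis $\psi_{A^X}\cong(\psi_A)^X$) and then pushing forward and factorizing. This requires compatibility of the coherence isomorphism $(A^{X'+Y})^X\cong (A^X)^{X'+Y}$ with the two projections, plus the fact that $(-)^X$ preserves the factorization — a diagram chase that is routine but where one must be careful that $\ce$-stability is genuinely being used and that the $\cv$-connectedness of $X$ is not secretly needed here (it is needed only for disjunctions and — in the relational atomic case — potentially for handling the extra arity $Y$, though there the identification $(A^{X\otimes Y})^{X'}\cong (A^{X'})^{X\otimes Y}$ suffices). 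For part (2) the analogous subtlety is making sure every limit-type construction appearing (equalizer, pullback for relations, wide pullback for conjunction) is $\lambda$-small and that the interchange of $\lambda$-directed colimits with these and with the $(\ce,\cm)$-factorizations is uniform enough to glue into a single coherent isomorphism $\varphi_A\cong\colim\varphi_{A_i}$ — this is bookkeeping, made manageable by the standing assumptions on $\cv$ and on $\cm$.
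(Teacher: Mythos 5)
Your proposal is correct and follows essentially the same route as the paper: the paper simply cites \cite[4.8]{RTe2} for the positive-primitive layers (atomic, conjunction, existential quantification) and then observes that the new disjunction step goes through because the coproduct and the $(\ce,\cm)$-factorization are preserved by $(-)^X$ (via $\cv$-connectedness and $\ce$-stability) and by $\lambda$-directed colimits (via the standing closure of $\cm$ under such colimits in $\cv^\to$), which is exactly the content of your disjunction case. The extra detail you supply for the positive-primitive cases is a re-derivation of the cited result rather than a different argument.
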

\begin{proof}
Following \cite[4.8]{RTe2}, it holds for positive-primitive formulas.
It extends to positive-existential ones because 
$(\ce,\cm)$-factorizations are stable under the limits and colimits considered in the statement (given the additional hypotheses).
\end{proof}

As for the regular fragment~\cite[Section~4]{RTe2}, we can talk about satisfaction and pointwise satisfaction of positive existential formulas, and sequents of such:

\begin{defi}
	Given an $\mathbb L$-structure $A$, an $X$-ary positive-existential formula $\varphi(x)$, and an arrow $a\colon X\to A$ in $\cv$ (a generalized element of $A$), we say that {\em $A$ satisfies $\varphi[a]$} and write
	$$ A\models\varphi[a]  $$
	if the transpose $\hat a\colon I\to A^X$ of $a$ factors through $\varphi_A\rightarrowtail A^X$.
\end{defi}

Then we have two possible approaches to satisfaction:
\begin{defi}
		Given an $\mathbb L$-structure $A$ and $X$-ary positive-existential formula $\varphi(x)$, we say that $A\in\Str(\LL)$ {\em satisfies $\varphi(x)$}, and write
	$$A\models \varphi,$$ 
	if $\varphi_A\rightarrowtail A^X$ is an isomorphism. We say that {\em $A$ satisfies $\varphi(x)$ pointwise} if $ A\models\varphi[a] $ for any $a\colon X\to A$.
\end{defi}
	 
Clearly, satisfaction implies pointwise satisfaction. The converse holds if $\ce$ contains the class of surjections: if $A$ satisfies $\varphi(x)$ pointwise, then the $\cm$-subobject $\varphi_A\rightarrowtail A^X$ is (a surjection and thus) in $\ce$, so that $\varphi_A\cong A^X$. 

\begin{defi}
	Given $X$-ary positive-existential formulas $\varphi$ and $\psi$, we say that $A\in\Str(\LL)$ {\em satisfies the sequent} $
	(\forall x)(  \varphi(x) \vdash \psi(x)),
	$ and write
	$$ A\models( \varphi\vdash \psi), $$
	if $\varphi_A\subseteq \psi_A$ as $\cm$-subobjects of $A^X$. We say that $A$ {\em satisfies the sequent pointwise} if for any $a$ such that $A\models\varphi[a]$, then 
	$A\models\psi[a]$.
\end{defi}

Again, satisfaction implies pointwise satisfaction. If $\ce$ contains the surjections, the two notions are equivalent (one argues as in~\cite{RTe2} after Remark~4.13).
	 
\begin{defi}
	Given a set $\TT$ of sequents of positive-existential formulas, we denote by $\Mod(\TT)$ the full subcategory of $\Str(\LL)$ spanned by those $\LL$-structures that satisfy the sequents in $\TT$. Such sets of sequents are called \textit{basic} theories.
\end{defi}

Every regular theory from \cite{RTe2} is basic. Note moreover that for any $A\in\Str(\LL)$ and a set of positive-primitive formulas $(\psi_j(x))_{j\in J}$ we have 
$$ A\models \psi_{j_0}\vdash \bigvee_{j\in J}\psi_j $$
 for any $j_0\in J$ (this follows directly from how disjunction is interpreted).

\begin{propo}
	The following are equivalent for an $\LL$-structure $A$, a positive-existential formula $\varphi(x)$, and a set of positive-primitive formulas $(\psi_j(x))_{j\in J}$:\begin{enumerate}
		\item $A\models (\bigvee_{j\in J} \psi_j)\vdash \varphi$;
		\item $A\models \psi_j\vdash\varphi$, for any $j\in J$.
	\end{enumerate}
\end{propo}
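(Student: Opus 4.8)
The plan is to unwind both statements into inclusions of $\cm$-subobjects of $A^X$ and then exploit the diagonal fill-in property of the factorization system $(\ce,\cm)$; nothing beyond the definition of the interpretation of a disjunction is needed, so the positive-primitive / positive-existential hypotheses play no role other than guaranteeing that all interpretations involved are honest $\cm$-subobjects. First I would record the basic comparison $(\psi_{j_0})_A\subseteq\bigl(\bigvee_{j\in J}\psi_j\bigr)_A$, valid for every $j_0\in J$: the coproduct inclusion $(\psi_{j_0})_A\to\sum_{j\in J}(\psi_j)_A$ followed by the $\ce$-part of the factorization defining $\bigl(\bigvee_{j\in J}\psi_j\bigr)_A$ is a morphism over $A^X$, which by definition of the ordering on $\cm$-subobjects witnesses the claim (this is exactly the remark preceding the proposition, that $A\models\psi_{j_0}\vdash\bigvee_{j\in J}\psi_j$). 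From this the implication (1)$\Rightarrow$(2) is immediate: if $\bigl(\bigvee_{j\in J}\psi_j\bigr)_A\subseteq\varphi_A$, then composing with the comparison gives $(\psi_j)_A\subseteq\varphi_A$ for every $j$, i.e. $A\models\psi_j\vdash\varphi$.

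For the converse (2)$\Rightarrow$(1), assume $(\psi_j)_A\subseteq\varphi_A$ for all $j\in J$, so each $(\psi_j)_A\rightarrowtail A^X$ factors through $\varphi_A\rightarrowtail A^X$. These factorizations assemble into a single morphism $g\colon\sum_{j\in J}(\psi_j)_A\to\varphi_A$ such that $(\varphi_A\rightarrowtail A^X)\circ g$ is the canonical map $\sum_{j\in J}(\psi_j)_A\to A^X$. On the other hand, by definition this canonical map factors as $\sum_{j\in J}(\psi_j)_A\twoheadrightarrow\bigl(\bigvee_{j\in J}\psi_j\bigr)_A\rightarrowtail A^X$ with the first arrow in $\ce$ and the second in $\cm$. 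Since $\varphi_A\rightarrowtail A^X$ is in $\cm$, the orthogonality of $(\ce,\cm)$ applied to the resulting commutative square yields a (unique) diagonal $d\colon\bigl(\bigvee_{j\in J}\psi_j\bigr)_A\to\varphi_A$ with $(\varphi_A\rightarrowtail A^X)\circ d=\bigl(\bigvee_{j\in J}\psi_j\bigr)_A\rightarrowtail A^X$; this equation is precisely the inclusion $\bigl(\bigvee_{j\in J}\psi_j\bigr)_A\subseteq\varphi_A$ of $\cm$-subobjects of $A^X$, that is, $A\models\bigl(\bigvee_{j\in J}\psi_j\bigr)\vdash\varphi$.

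There is essentially no obstacle: the only points requiring a moment of care are that the coproduct $\sum_{j\in J}(\psi_j)_A$ exists (it does, as $\cv_0$ is cocomplete) and that the square to which fill-in is applied genuinely commutes, which is just the statement that both composites $\sum_{j\in J}(\psi_j)_A\to A^X$ agree with the canonical comparison map, true by the construction of $g$ and of the disjunction interpretation. Equivalently, one may summarise the whole argument as the observation that in the poset of $\cm$-subobjects of $A^X$ the subobject $\bigl(\bigvee_{j\in J}\psi_j\bigr)_A$ is the join of the $(\psi_j)_A$, a fact that the $(\ce,\cm)$-factorization makes manifest, and the proposition is then the universal property of that join tested against $\varphi_A$.
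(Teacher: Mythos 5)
Your proof is correct and follows essentially the same route as the paper's: both directions hinge on the coproduct map $\sum_{j\in J}(\psi_j)_A\to\varphi_A$ and the orthogonality (diagonal fill-in) of $(\ce,\cm)$ against the factorization defining the disjunction. The only cosmetic difference is that you phrase (1)$\Rightarrow$(2) via the remark $A\models\psi_{j_0}\vdash\bigvee_j\psi_j$ rather than by restricting the composite along the coproduct injections, which amounts to the same thing.
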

\begin{proof}
	Consider the solid part of the following diagram.
	\begin{center}
		\begin{tikzpicture}[baseline=(current  bounding  box.south), scale=2]
			
			\node (a0) at (0,1.6) {$\sum_{j\in J}(\psi_j)_A$};
			\node (a0') at (1.1,0.8) {$(\bigvee_{j\in J}\psi_j)_A$};
			\node (b0) at (0,0.8) {$\varphi_A$};
			\node (c0) at (-1,0.8) {$(\psi_j)_A$};
			\node (d0) at (0,0) {$A^X$};
			
			\path[font=\scriptsize]
			
			(a0) edge [dashed, ->] node [right] {$m$} (b0)
			(c0) edge [->] node [left] {$\iota_j$} (a0)
			(b0) edge [>->] node [right] {} (d0)
			(c0) edge [>->] node [below] {} (d0)
			(a0') edge [>->] node [below] {} (d0)
			(c0) edge [dashed, >->] node [below] {$m_j$} (b0)
			(a0) edge [->>] node [right] {$\ \ e\in\ce$} (a0');
		\end{tikzpicture}	
	\end{center}
	If (1) holds, then the map $m$ above exists (being the composite of $e$ with the inclusion $(\bigvee_{j\in J}\psi_j)_A\subseteq \varphi_A$), and thus also the maps $m_j$ exist, showing that (2) holds. Conversely, if (2) holds, so that the maps $m_j$ above exist, then also $m$ exist by the universal property of coproducts. Then, by orthogonality of the factorization system, (1) holds.
\end{proof}

This implies that basic theories can be equivalent presented by sequents
$$(\forall x)(  \varphi(x) \vdash \psi(x))$$ where $\varphi$ is positive-primitive and $\psi$ positive-existential.

\begin{rem}
	Assume that the terminal object $1$ is connected and $\ce$-projective, and let $\{\varphi_j\}_{j\in J}$ be a set of statements (that is, formulas of arity $0$). Then, for any $\LL$-structure $A$, if $A\models \bigvee_{j\in J}\varphi_j$, then there exists $j_0\in J$ for which $A\models \varphi_{j_0}$. Indeed, if $A$ proves the disjunction of all the $\varphi_j$, then we can consider the following composite
	$$ \sum_{j\in J}(\varphi_j)_A\xrightarrow{\ \ce\ }(\bigvee_{j\in J}\varphi_j)_A\xrightarrow{\ \cong\ } A^0\cong 1 $$
	in $\cv$, where the last map is an isomorphism. By $\ce$-projectivity and connectedness of $1$, the identity map into the terminal object factors through one of the inclusions $\varphi_{j_0}\rightarrowtail 1$, which is then a split epimorphism and hence an isomorphism. Thus  $A\models \varphi_{j_0}$.
\end{rem}

\begin{exam}\label{metric}
	Consider $\cv=\Met$ with the (surjective, isometry) factorization system (but also the (dense, closed isometry) works); see \cite[3.16]{AR1} for these factorization systems. Then $X\in\Met$ is a ``standard'' metric space (that is, distance has finite real values) if and only if it satisfies the $(1+1)$-ary formula $\varphi$ given below
	$$ \varphi(x,y):= \bigvee_{n\in\mathbb N} \exists (z:2_n)\ (\pi_1(z)=x)\wedge (\pi_2(z)=y) . $$
	Here, $2_n$ denote the metric space having two points whose distance is $n$.	
\end{exam}

\begin{exam}
	Let $\cv=\Ban$ with the (strong epimorphism, monomorphism) factorization system (but also the (dense, embedding) works). 
	Notice that strong epimorphisms coincide with surjections while dense maps coincide with epimorphisms. We will now define a theory for Hilbert spaces; this will be a two-sorted theory (it is an easy generalization to modify the definition above to include more than one sort).
	
	To begin with, it is easy to construct a two-sorted language $\LL$, with sorts $S_1,S_2$, and a regular theory $\TT$ whose models are pairs of Banach spaces $(\mathbb B,\mathbb B')$ where the underlying vector spaces satisfy $\mathbb B'=\mathbb B \oplus\mathbb B$: the language $\LL$ needs to contain two product projections $p_i,p_2$, and $\TT$ specifies that the underlying set of $\mathbb B'$ is the product of two copies of $\mathbb B$). 
	
	Now we enlarge $\mathbb T$ to impose that
	\begin{equation}\label{hilbert}
		\| (x,y)\|_{\mathbb B'}= \sqrt{2\| x\|_\mathbb B +2\| y\|_\mathbb B} 
	\end{equation}
	for any $(x,y)\in\mathbb B'$. For any $p,q\in\mathbb Q$ let $\overline{(p,q)}=\sqrt{2p^2+2q^2}$ and consider the sentence
	$$ \varphi_{p,q}:= (\forall x:\mathbb C_p) (\forall y:\mathbb C_q)(\exists z:\mathbb C_{\overline{(p,q)}})\ \  p_1z=x \wedge p_2z=y $$
	where $x,y$ are of sort $S_1$ and $z$ of sort $S_2$. Here,
	$\mathbb C_p$ are complex numbers with the norm $\|1\|_p=p$. It follows that $(\mathbb B,\mathbb B')$ satisfies $\varphi_{p,q}$ if and only if the inequality $\leq$ holds in \ref{hilbert}.
	
	Next, for any $p,q$ as above and $0<\epsilon<\overline{(p,q)}$, add to $\mathbb T$ the basic sentence:
	$$\psi_{p,q,\epsilon}:=  (\forall x:\mathbb C_p) (\forall y:\mathbb C_q)\ \left( (\exists z:\mathbb C_{\overline{(p,q)}-\epsilon}) (p_1z=x \wedge p_2z=y) \vdash \bot \right). $$
	This says that the norm of $(x,y)$ cannot be smaller than the right-hand-side of \ref{hilbert}. 
	
	Since $\mathbb B'$ with the norm of \ref{hilbert} is exactly the coproduct of $\mathbb B$ with itself in $\Ban$, it follows that a model of $\TT$ is the same as a pair $(\mathbb B,\mathbb B+\mathbb B)$, where $\mathbb B\in\Ban$ (in particular $\Mod(\TT)\simeq \Ban$ at this stage).
	
	Finally, we enlarge $\TT$ with an equations specifying that the map (definable in our language)
	$$ f(x,y)= ((x+y)/\sqrt 2, (x-y)/\sqrt 2)$$
	is an involution: $f\circ f=1$. It is easy so see that a Banach space satisfies this property if and only if it satisfies the parallelogram law, if and only if it is an Hilbert space. 
\end{exam}

\begin{rem}
	(1) In \cite[4.16]{RTe2}, we have compared enriched regular logic over $\Ban$ with the logic of positive bounded formulas of \cite{I}. The conclusion was that the satisfaction from \cite{I} is our satisfaction with respect to the factorization system (strong epimorphism, monomorphism) while he approximate satisfaction from \cite{I} coincide with our satisfaction with respect to the factorization system (epimorphisms, strong monomorphisms). The presence of disjunctions does
	not change this picture because the interpretation of {\em finite} disjunctions
	is the same for both factorization systems. 
	
	(2) Similarly, the interpretation of finite disjunctions in $\Met$
	is the same for both factorization systems from \ref{metric}.
\end{rem}

Let us now recall the notion of $\lambda$-elementary morphism from~\cite[Definition~6.3]{RTe2}: a morphism $f\colon K\to L$ in $\Str(\mathbb L)$ is $\lambda$-elementary if for every positive-primitive formula $\psi(x)$ in 
$\mathbb L_{\lambda\lambda}$ the diagram 
\begin{center}
	\begin{tikzpicture}[baseline=(current  bounding  box.south), scale=2]
		
		\node (a0) at (0,0.8) {$\psi_K$};
		\node (a0') at (0.2,0.6) {$\lrcorner$};
		\node (b0) at (1,0.8) {$\psi_L$};
		\node (c0) at (0,0) {$K^X$};
		\node (d0) at (1,0) {$L^X$};
		
		\path[font=\scriptsize]
		
		(a0) edge [->] node [above] {} (b0)
		(a0) edge [>->] node [left] {} (c0)
		(b0) edge [>->] node [right] {} (d0)
		(c0) edge [->] node [below] {$f^X$} (d0);
	\end{tikzpicture}	
\end{center}
is a pullback.

\begin{defi}\label{p.e.stable}
	Given a language $\LL$, we say that $\lambda$-elementary morphisms are {\em $\lambda$-positive-existentially stable} (in short, {\em $\lambda$-p.e.\ stable}) if for any $\lambda$-elementary morphism $f$ in $\Str(\mathbb L)$ and for every positive-existential formula $\psi(x)$ in $\mathbb L_{\lambda\lambda}$ the induced diagram 
	\begin{center}
		\begin{tikzpicture}[baseline=(current  bounding  box.south), scale=2]
			
			\node (a0) at (0,0.8) {$\psi_K$};
			\node (a0') at (0.2,0.6) {$\lrcorner$};
			\node (b0) at (1,0.8) {$\psi_L$};
			\node (c0) at (0,0) {$K^X$};
			\node (d0) at (1,0) {$L^X$};
			
			\path[font=\scriptsize]
			
			(a0) edge [->] node [above] {} (b0)
			(a0) edge [>->] node [left] {} (c0)
			(b0) edge [>->] node [right] {} (d0)
			(c0) edge [->] node [below] {$f^X$} (d0);
		\end{tikzpicture}	
	\end{center}
	is a pullback.  
\end{defi}

\begin{propo}\label{pos-ex-sequent}
	Assume that $\lambda$-elementary morphisms are $\lambda$-p.e.\ stable over $\LL$. If $f\colon K\to L$ is $\lambda$-elementary, $\varphi$ and $\psi$ are positive-existential formulas, and $L$ satisfies the sequent  
	$$(\forall x)(  \varphi(x) \vdash \psi(x))$$
	in $\mathbb L_{\lambda\lambda}$ then $K$ also satisfies the same sequent.
\end{propo}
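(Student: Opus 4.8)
The plan is to reduce satisfaction of the sequent to a statement about pullbacks of $\cm$-subobjects along $f^X\colon K^X\to L^X$, and then to invoke the elementary fact that pulling back is monotone on subobject posets. Recall that $L\models(\varphi\vdash\psi)$ means exactly that the $\cm$-mono $\varphi_L\rightarrowtail L^X$ factors through $\psi_L\rightarrowtail L^X$, say via a comparison $c\colon\varphi_L\to\psi_L$; what we must produce is the analogous comparison $\varphi_K\to\psi_K$ over $K^X$, which is precisely the assertion $K\models(\varphi\vdash\psi)$.

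First I would apply the hypothesis. Since $f$ is $\lambda$-elementary and both $\varphi$ and $\psi$ are positive-existential formulas of $\mathbb L_{\lambda\lambda}$, the $\lambda$-p.e.\ stability of $\lambda$-elementary morphisms (Definition~\ref{p.e.stable}) tells us that the squares relating $\varphi_K,\varphi_L,K^X,L^X$ and $\psi_K,\psi_L,K^X,L^X$ are both pullbacks; equivalently, $\varphi_K\cong\varphi_L\times_{L^X}K^X$ and $\psi_K\cong\psi_L\times_{L^X}K^X$ as $\cm$-subobjects of $K^X$. These pullbacks exist because $\cv$, and hence $\Str(\LL)$, is complete, and $\cm$ is stable under pullback, being the right class of an orthogonal factorization system; so the pulled-back monos are again in $\cm$.

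Then I would chase the diagram. The composite $\varphi_K\to\varphi_L\xrightarrow{c}\psi_L\rightarrowtail L^X$ coincides with $\varphi_K\rightarrowtail K^X\xrightarrow{f^X}L^X$, by the pullback square for $\varphi$ together with the defining property of $c$. Hence the universal property of the pullback $\psi_K\cong\psi_L\times_{L^X}K^X$ yields a unique $d\colon\varphi_K\to\psi_K$ with $(\psi_K\rightarrowtail K^X)\circ d$ equal to $\varphi_K\rightarrowtail K^X$; since the latter is a monomorphism, $d$ is too, so $\varphi_K\subseteq\psi_K$ as $\cm$-subobjects of $K^X$, i.e.\ $K\models(\varphi\vdash\psi)$. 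I do not expect a genuine obstacle here: the entire content sits in the p.e.\ stability hypothesis, and once both interpretations in $K$ have been identified with pullbacks along $f^X$ of the corresponding interpretations in $L$, the conclusion is formal. This is the positive-existential counterpart of the regular statement, where bare $\lambda$-elementarity (the pullback square for positive-primitive formulas) already does the job; Definition~\ref{p.e.stable} is exactly what is needed to upgrade the argument from positive-primitive to positive-existential formulas.
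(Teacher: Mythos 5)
Your proof is correct and follows essentially the same route as the paper: both arguments use $\lambda$-p.e.\ stability to recognize $\varphi_K$ and $\psi_K$ as pullbacks of $\varphi_L$ and $\psi_L$ along $f^X$, and then induce the comparison $\varphi_K\to\psi_K$ from $\varphi_L\to\psi_L$ via the universal property of the pullback defining $\psi_K$. The only cosmetic difference is that you additionally justify that the pulled-back monos lie in $\cm$, which is not needed since Definition~\ref{p.e.stable} already asserts the pullback property for the given $\cm$-subobject interpretations.
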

\begin{proof} 
	It follows essentially by Definition~\ref{p.e.stable} above. Consider the solid part of the diagram below
	\begin{center}
		\begin{tikzpicture}[baseline=(current  bounding  box.south), scale=2, on top/.style={preaction={draw=white,-,line width=#1}}, on top/.default=6pt]
			
			\node (a0) at (-0.4,0.75) {$\varphi_K$};
			\node (b0) at (0.6,0.75) {$\varphi_L$};
			
			\node (a'0) at (0.4,1.2) {$\psi_K$};
			\node (b'0) at (1.4,1.2) {$\psi_L$};
			
			\node (c0) at (0,0) {$K^X$};
			\node (d0) at (1,0) {$L^X$};
			
			\path[font=\scriptsize]

			(a'0) edge [->] node [above] {} (b'0)
			(a'0) edge [>->] node [left] {} (c0)
			(b'0) edge [>->] node [right] {} (d0)
			
			(a0) edge [->, on top] node [above] {} (b0)
			(a0) edge [>->] node [left] {} (c0)
			(b0) edge [>->] node [right] {} (d0)
			
			(b0) edge [>->] node [right] {} (b'0)
			(a0) edge [dashed, >->] node [right] {} (a'0)
			
			(c0) edge [->] node [below] {$f^X$} (d0);
		\end{tikzpicture}	
	\end{center}
	where the two vertical squares are pullbacks since $f$ is elementary with respect to $\varphi$ and $\psi$ by hypothesis, and the arrow $\varphi_L\to\psi_L$ is induced by the fact that $L$ satisfies the sequent. By the universal property of the pullback, the dashed arrow above exists, showing that also $K$ satisfies the sequent.
\end{proof}  

In the two lemmas below we give conditions so that $\lambda$-elementary morphisms are $\lambda$-p.e. stable:

\begin{lemma}\label{pos-ex}
Assume that $\cv$ has an $\ce$-generator consisting of $\ce$-projective objects and that coproducts in $\cv$ are universal. Then $\lambda$-elementary morphisms are $\lambda$-p.e.\ stable in any language $\LL$.
\end{lemma}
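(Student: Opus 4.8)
The plan is to fix a positive-existential $\psi(x)=\bigvee_{j\in J}\psi_j(x)$ in $\LL_{\lambda\lambda}$ (so each $\psi_j$ is positive-primitive and $|J|<\lambda$) and a $\lambda$-elementary $f\colon K\to L$, and to show that the square relating $\psi_K$, $\psi_L$, $K^X$, $L^X$ is a pullback. Let $P$ be the pullback of the $\cm$-subobject $\psi_L\rightarrowtail L^X$ along $f^X\colon K^X\to L^X$; since $\cm$ is stable under pullback, $P\rightarrowtail K^X$ lies in $\cm$, and there is a canonical comparison $\psi_K\to P$ over $K^X$ (using the induced map $\psi_K\to\psi_L$). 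By uniqueness of $(\ce,\cm)$-factorizations it suffices to prove that the induced morphism $g\colon\sum_{j\in J}(\psi_j)_K\to P$ is in $\ce$: then $\sum_{j}(\psi_j)_K\xrightarrow{g}P\rightarrowtail K^X$ is an $(\ce,\cm)$-factorization of the defining map $\sum_j(\psi_j)_K\to K^X$, so $P\cong\psi_K$ and the required pullback square is the one defining $P$.

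The first real step is to exhibit $g$ as a pullback of the $\ce$-map $e\colon\sum_j(\psi_j)_L\to\psi_L$ coming from the factorization that defines $\psi_L$. Since $f$ is $\lambda$-elementary and each $\psi_j$ is positive-primitive, each square $(\psi_j)_K\to(\psi_j)_L$, $K^X\to L^X$ is a pullback; as coproducts in $\cv$ are universal, pulling back $\sum_j(\psi_j)_L\rightarrowtail L^X$ along $f^X$ yields $\sum_j(\psi_j)_K\rightarrowtail K^X$. Pasting this with the pullback square defining $P$ shows that $\sum_j(\psi_j)_K$ is the pullback of $e$ along the structure map $P\to\psi_L$, with $g$ the pullback projection. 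Thus $g$ is a pullback of a map in $\ce$.

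Since $\ce$ need not be pullback stable, the substantive point is to deduce $g\in\ce$ from this together with the hypotheses on the $\ce$-generator. The mechanism is that $\ce$-projective objects lift along $g$: given an $\ce$-projective $G$ and $h\colon G\to P$, the composite $G\to P\to\psi_L$ lifts along $e$ (as $G$ is $\ce$-projective and $e\in\ce$), and the pullback property of $P$ then lifts $h$ through $g$. Now take the $(\ce,\cm)$-factorization $\sum_j(\psi_j)_K\xrightarrow{e'}D\xrightarrow{m}P$; by the previous sentence every $\ce$-projective mapping to $P$ factors through $m$. If we can choose a morphism $G_0\to P$ in $\ce$ with $G_0$ an $\ce$-projective, then this map factors through $m\in\cm$, so orthogonality of $(\ce,\cm)$ makes $m$ a split mono, hence invertible; therefore $g\in\ce$, as wanted.

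The step I expect to be the main obstacle is producing such an $\ce$-projective $\ce$-cover $G_0\to P$: the hypothesis only gives an $\ce$-generator for $\cv_\lambda$, whereas $P$, being a pullback of subobjects of $K^X$ and $L^X$ for arbitrary $K,L$, need not be $\lambda$-presentable. I would resolve this by taking, for each $\lambda$-presentable $P'$ with a chosen map $P'\to P$, an $\ce$-map $G_{P'}\to P'$ with $G_{P'}$ in the generator, and setting $G_0:=\coprod_{P'}G_{P'}$ (a set-indexed coproduct, $\cv_0$ being locally small). Then $G_0$ is $\ce$-projective because a coproduct of $\ce$-projectives is, and since $P$ is the $\lambda$-filtered colimit of the $P'$ one checks, using orthogonality, that the evident map $G_0\to P$ is in $\ce$: the $\cm$-part of its factorization receives each $P'\to P$ (by lifting against $G_{P'}\to P'\in\ce$), these lifts cohere because the $\cm$-part is monic, and the resulting cocone induces a section of the $\cm$-part. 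Everything else — stability of $\cm$ under pullback, uniqueness of factorizations, and the bookkeeping keeping us inside $\LL_{\lambda\lambda}$ — is routine.
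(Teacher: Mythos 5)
Your proof is correct and follows essentially the same route as the paper's: $\lambda$-elementarity plus universality of coproducts gives the pullback at the level of $\sum_{j}(\psi_j)$, and the $\ce$-generator of $\ce$-projectives together with $(\ce,\cm)$-orthogonality transfers this to $\bigvee_{j}\psi_j$ --- the paper just executes this last step as a direct diagram chase with a generalized element covered by an $\ce$-projective, rather than by showing your comparison map $g$ into the pullback $P$ lies in $\ce$ (and your extra care in building an $\ce$-projective $\ce$-cover of the possibly non-$\lambda$-presentable $P$ addresses a point the paper glosses over). One cosmetic slip: the diagonal obtained from $G_0\to P\in\ce$ against $m\in\cm$ exhibits $m$ as a split \emph{epi}morphism (which, being already a monomorphism since $\cm$ consists of monos, is then invertible), not a split mono.
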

\begin{proof}
Let $f\colon K\to L$ be $\lambda$-elementary and 
$$\psi(x)=\bigvee_{j\in J}\psi_j$$ where $\psi_j$, $j\in J$ are positive-primitive. Following \cite[6.3]{RTe2},
	\begin{center}
		\begin{tikzpicture}[baseline=(current  bounding  box.south), scale=2]
			
			\node (a0) at (0,0.8) {$(\psi_j)_K$};
			\node (a0') at (0.2,0.6) {$\lrcorner$};
			\node (b0) at (1,0.8) {$(\psi_j)_L$};
			\node (c0) at (0,0) {$K^X$};
			\node (d0) at (1,0) {$L^X$};
			
			\path[font=\scriptsize]
			
			(a0) edge [->] node [above] {} (b0)
			(a0) edge [>->] node [left] {} (c0)
			(b0) edge [>->] node [right] {} (d0)
			(c0) edge [->] node [below] {$f^X$} (d0);
		\end{tikzpicture}	
	\end{center}
are pullbacks. Since coproducts are universal, the following
	\begin{center}
		\begin{tikzpicture}[baseline=(current  bounding  box.south), scale=2]
			
			\node (a0) at (0,0.8) {$\sum_{j\in J}(\psi_j)_K$};
			\node (a0') at (0.2,0.6) {$\lrcorner$};
			\node (b0) at (1.7,0.8) {$\sum_{j\in J}(\psi_j)L$};
			\node (c0) at (0,0) {$K^X$};
			\node (d0) at (1.7,0) {$L^X$};
			
			\path[font=\scriptsize]
			
			(a0) edge [->] node [above] {} (b0)
			(a0) edge [>->] node [left] {} (c0)
			(b0) edge [>->] node [right] {} (d0)
			(c0) edge [->] node [below] {$f^X$} (d0);
		\end{tikzpicture}	
	\end{center}
is a pullback. To conclude we have to prove that also
	\begin{center}
		\begin{tikzpicture}[baseline=(current  bounding  box.south), scale=2]
			
			\node (a0) at (0,0.8) {$\bigvee_{j\in J}(\psi_j)_K$};
			\node (b0) at (1.7,0.8) {$\bigvee_{j\in J}(\psi_j)_L$};
			\node (c0) at (0,0) {$K^X$};
			\node (d0) at (1.7,0) {$L^X$};
			
			\path[font=\scriptsize]
			
			(a0) edge [->] node [above] {} (b0)
			(a0) edge [>->] node [left] {} (c0)
			(b0) edge [>->] node [right] {} (d0)
			(c0) edge [->] node [below] {$f^X$} (d0);
		\end{tikzpicture}	
	\end{center}
is a pullback. Consider a commutative diagram
	\begin{center}
		\begin{tikzpicture}[baseline=(current  bounding  box.south), scale=2]
			
			\node (a0) at (0,0.8) {$X$};
			\node (b0) at (1.7,0.8) {$\bigvee_{j\in J}(\psi_j)_L$};
			\node (c0) at (0,0) {$K^X$};
			\node (d0) at (1.7,0) {$L^X$};
			
			\path[font=\scriptsize]
			
			(a0) edge [->] node [above] {} (b0)
			(a0) edge [>->] node [left] {} (c0)
			(b0) edge [>->] node [right] {} (d0)
			(c0) edge [->] node [below] {$f^X$} (d0);
		\end{tikzpicture}	
	\end{center}
Since $\cv$ has an $\ce$-generator of $\ce$-projective objects, there
exists $e:Y\to X$ in $\ce$ where $Y$ is $\ce$-projective. Hence the composite $Y\to\bigvee_{j\in J}(\psi_j)_L$ lifts to 
$\sum_{j\in J}(\psi_j)_L$. Let $Y\to \sum_{j\in J}(\psi_j)_K$ be the morphism induced by the universal property of the first pullback, and $t\colon Y\to (\bigvee_{j\in J}\psi_j)_L$ its composite with $\sum_{j\in J}(\psi_j)_L\twoheadrightarrow (\bigvee_{j\in J}(\psi_j)_L$.
We then obtain the commutative square
\begin{center}
		\begin{tikzpicture}[baseline=(current  bounding  box.south), scale=2]
			
			\node (a0) at (0,0.8) {$Y$};
			\node (b0) at (1,0.8) {$(\bigvee_{j\in J}\psi_j)_L$};
			\node (c0) at (0,0) {$X$};
			\node (d0) at (1,0) {$K^X$};
			
			\path[font=\scriptsize]
			
			(a0) edge [->] node [above] {$t$} (b0)
			(a0) edge [->>] node [left] {$e$} (c0)
			(b0) edge [>->] node [right] {} (d0)
			(c0) edge [->] node [below] {} (d0);
		\end{tikzpicture}	
	\end{center}
Then the diagonal $X\to\bigvee_{j\in J}(\psi_j)_L$, induced by orthogonality of the factorization system, is the map needed to make our diagram a pullback.
\end{proof}

\begin{lemma}\label{pos-ex-additive}
	Let $\cv$ be locally finitely presentable, have finite direct sums (that is, $\cv$ is $\bo{CMon}$-enriched), and such that $\ce$ is pullback stable. Then $\lambda$-elementary morphisms are $\lambda$-p.e.\ stable in any language $\LL$.
\end{lemma}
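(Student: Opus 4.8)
The plan is to show that in this setting every positive-existential formula of $\mathbb L_{\lambda\lambda}$ has, in every $\LL$-structure, the same interpretation as a formula for which $\lambda$-elementary morphisms are already known to induce pullback squares: a \emph{finite} disjunction of positive-primitive formulas will be replaced by a single positive-primitive one, and the general case will then follow by a filtered colimit argument. I first record two closure facts. Since $\ce$ is pullback stable, existential quantification passes along conjunctions~\cite{RTe2}, so a finite conjunction of positive-primitive formulas is again equivalent to a positive-primitive one (pull all the existential quantifiers outward, renaming so they do not occur in the other conjuncts). And for a positive-primitive $\theta(y)$ with $y\colon Y$ and any term $t$ with $t_A\colon A^Y\to A^X$, the $(\ce,\cm)$-image of $\theta_A\xrightarrow{t_A}A^X$ is exactly the interpretation of $(\exists y)\big((x=t(y))\wedge\theta(y)\big)$, which by the first fact is positive-primitive.

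The key step is then that for positive-primitive $X$-ary $\psi_1,\dots,\psi_n$, the disjunction $\psi_1\vee\dots\vee\psi_n$ is equivalent to a positive-primitive formula. Its interpretation in $A$ is the $(\ce,\cm)$-image of the induced morphism $\coprod_i(\psi_i)_A\to A^X$. Since $\cv$ is semiadditive, $\coprod_i(\psi_i)_A=\bigoplus_i(\psi_i)_A$, and as $[-,A]$ carries coproducts to products, $\bigoplus_i A^X\cong[\,\bigoplus_i X,A\,]=A^{\bigoplus_i X}$, with $\phi$ corresponding to the tuple $(\phi\circ\iota_i)_i$ of its restrictions along the coproduct inclusions $\iota_i\colon X\to\bigoplus_i X$. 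Under this identification $\bigoplus_i(\psi_i)_A\rightarrowtail A^{\bigoplus_i X}$ is the interpretation of $\theta(y):=\bigwedge_i\psi_i(\iota_i y)$, each conjunct being the pullback of $(\psi_i)_A$ along the term $\iota_i^{*}$ and hence positive-primitive, so $\theta$ is positive-primitive by the first fact; while the induced morphism is the composite of this inclusion with the codiagonal $\bigoplus_i A^X\to A^X$, which under the same identification is precomposition with the diagonal $\Delta\colon X\to\bigoplus_i X=\prod_i X$ (available precisely because $\bigoplus_i X$ is also a product), hence equals $t_A$ for the term $t$ induced by $\Delta$. Therefore $(\psi_1\vee\dots\vee\psi_n)_A$ is the $(\ce,\cm)$-image of $\theta_A\xrightarrow{t_A}A^X$, which by the second fact is the interpretation of the positive-primitive formula $\chi:=(\exists y\colon\bigoplus_i X)\big((x=t(y))\wedge\theta(y)\big)$. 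As $\lambda$-presentable objects are closed under finite coproducts, $\chi$ lies in $\mathbb L_{\lambda\lambda}$, so a $\lambda$-elementary $f$ induces a pullback square for $\chi$ and hence for $\psi_1\vee\dots\vee\psi_n$.

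For an arbitrary positive-existential $\psi=\bigvee_{j\in J}\psi_j$ of $\mathbb L_{\lambda\lambda}$ I would then write $\coprod_{j\in J}(\psi_j)_A=\colim_{J_0}\coprod_{j\in J_0}(\psi_j)_A$ over the finite subsets $J_0\subseteq J$; the $(\ce,\cm)$-factorization of the induced map to $A^X$ is computed colimitwise (because $\ce$ is closed under colimits in $\cv^{\to}$ and $\cm$ under the relevant directed colimits there, as in the proof of Lemma~\ref{stable}), so $\psi_A=\colim_{J_0}(\bigvee_{j\in J_0}\psi_j)_A$. The square attached to $\psi$ and a $\lambda$-elementary $f$ is then the filtered colimit of the squares attached to the finite sub-disjunctions, which are pullbacks by the previous paragraph; since $\cv$ is locally finitely presentable, a filtered colimit of pullback squares is again a pullback. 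This yields $\lambda$-p.e.\ stability.

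The step I expect to be the main obstacle is the middle paragraph: realizing a finite disjunction of positive-primitive formulas as a single positive-primitive one. Conceptually this is the slogan that, over a semiadditive base, a disjunction $\psi_1\vee\dots\vee\psi_n$ turns into an existential statement — there is a tuple in $\bigoplus_i X$ whose $i$-th coordinate witnesses $\psi_i$ and whose coordinate-sum is the given element — while $\ce$-pullback-stability supplies exactly the passage of $\exists$ past $\wedge$ needed to collapse the resulting conjunction of positive-primitive formulas back into positive-primitive form. The real work is the enriched-term bookkeeping: matching variances so that the inclusions $\iota_i$ and the diagonal $\Delta$ induce the asserted reindexings of powers, and checking that the equation $x=t(y)$ and the substituted formulas $\psi_i(\iota_i y)$ have the claimed interpretations.
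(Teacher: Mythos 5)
Your proposal is correct and follows essentially the same route as the paper: finite disjunctions of positive-primitive formulas are rewritten as a single positive-primitive formula using the biproduct structure (your $(\exists y\colon\bigoplus_i X)\bigl((x=t(y))\wedge\bigwedge_i\psi_i(\iota_i y)\bigr)$ is, up to notation, the paper's $(\exists (y,z)\colon X+X)(\varphi(y)\wedge\psi(z)\wedge(y+z=x))$, since precomposition with the diagonal $\Delta\colon X\to\bigoplus_i X$ is exactly the addition $+\colon A^{X\oplus X}\to A^X$), with pullback-stability of $\ce$ used to collapse the result to positive-primitive form, and the infinite case handled by writing the interpretation as a filtered colimit of finite sub-disjunctions and invoking commutation of filtered colimits with pullbacks in a locally finitely presentable category. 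The extra bookkeeping you flag (substitution along $\iota_i$ and the graph equation $x=t(y)$) is handled at the same level of detail in the paper's own argument, so there is no gap beyond what the paper itself elides.
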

\begin{proof}
	Since $\cv$ has finite direct sums, given any language $\LL$ and arity $X\in\cv$ we have a sum function symbol $+:(X\oplus X,X)$. We shall first prove that for any $X$-ary formulas $\varphi$ and $\psi$ we have
	$$ (\varphi\vee\psi)(x)\equiv (\exists (y,z):X+X)\  (\varphi(y)\wedge\psi(z)\wedge (y+z=x)), $$
	meaning that they have the same interpretation under any $\LL$-structure. This way it follows that finite disjunctions can be captured under the regular fragment.
	
	Given an $\LL$-structure $A$, call $\chi$ the formula on the right hand side; then $\chi$ is interpreted as follows
	\begin{center}
		\begin{tikzpicture}[baseline=(current  bounding  box.south), scale=2]
			
			\node (a0) at (0,0.8) {$(\varphi(y)\wedge\psi(z))_A$};
			\node (a0') at (0.2,0.55) {$\lrcorner$};
			\node (b0) at (1.5,0.8) {$A^X\oplus \psi_A$};
			\node (c0) at (0,0) {$\varphi_A\oplus A^X$};
			\node (d0) at (1.5,0) {$A^X\oplus A^X$};
			\node (e0) at (2.8,0) {$A^X$};
			\node (f0) at (2.4,1) {$\chi_A$};
			
			\path[font=\scriptsize]
			
			(a0) edge [>->] node [above] {} (b0)
			(a0) edge [>->] node [left] {} (c0)
			(b0) edge [>->] node [right] {} (d0)
			(c0) edge [>->] node [below] {} (d0)
			(d0) edge [->] node [above] {$+$} (e0)
			(a0) edge [bend left=15, ->>] node [above] {$\ce$} (f0)
			(f0) edge [bend left=15, >->] node [right] {$\cm$} (e0);
		\end{tikzpicture}	
	\end{center} 
	but $(\varphi(y)\wedge\psi(z))_A\cong \varphi_A\oplus\psi_A$; thus $\chi_A$ coincides by definition with $(\varphi\vee\psi)_A$.
	
	Consider now a $\lambda$-elementary morphism $f\colon K\to L$. It follows that $f$ is elementary with respect to any $\psi$ that is the finite disjunction of positive-primitive formulas. Indeed, any such $\psi$ is equivalent to a positive-primitive formula (by pullback stability of $\ce$); thus the square in Definition~\ref{p.e.stable} is a pullback by definition.\\
	If $\psi=\bigvee_{j\in J}\phi_j$ is an infinite disjunction of positive primitive formulas; then the interpretation of $\psi$ in an $\LL$-structure can be seen as the filtered colimit of the interpretations of $\bigvee_{j\in J'}\phi_j$ where $J'\subseteq J$ is finite. Since filtered colimits are stable under pullbacks in a locally finitely presentable category, it follows that also the interpretation of $\psi$ is preserved under the pullback.
\end{proof}

 Examples of such $\cv$'s are $\Ab$, $\bo{CMon}$, $R$-$\Mod$, $\bo{GAb}$, and $\bo{DGAb}$ with the (regular epi, mono) factorization system.

Below we say that $\cl\subseteq\ck$ is \textit{closed under $\lambda$-elementary subobjects} if for any $\lambda$-elementary $f\colon K\to L$ with $L\in\cl$, then also $K\in\cl$.

\begin{propo}\label{stable1} 
Let $\mathbb T$ be a basic theory in 
$\mathbb L_{\lambda\lambda}$. Then $\Mod(\mathbb T)$ is closed under
\begin{enumerate}
\item $\lambda$-directed colimits;
\item $\lambda$-elementary subobjects, provided $\lambda$-elementary morphisms are $\lambda$-p.e.\ stable;
\item powers by $\ce$-stable $\cv$-connected objects.
\end{enumerate}
\end{propo}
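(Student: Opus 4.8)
The plan is to verify each of the three closure properties by reducing the satisfaction of a sequent $(\forall x)(\varphi(x)\vdash\psi(x))$ to the behaviour of the interpretations $\varphi_A\rightarrowtail A^X$ under the relevant categorical operation, using Lemma~\ref{stable} as the main technical input. Recall that $A\in\Mod(\mathbb T)$ means precisely that $\varphi_A\subseteq\psi_A$ as $\cm$-subobjects of $A^X$ for every sequent in $\mathbb T$, so in each case it suffices to transport this inclusion along the construction at hand.

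\textbf{(1) $\lambda$-directed colimits.} Let $A\cong\colim A_i$ with each $A_i\in\Mod(\mathbb T)$, and fix a sequent $(\varphi\vdash\psi)$ in $\mathbb T$, with $\varphi,\psi$ positive-existential and $\lambda$-ary. By Lemma~\ref{stable}(2) we have $\varphi_A\cong\colim\varphi_{A_i}$ and $\psi_A\cong\colim\psi_{A_i}$, and these colimits are compatible with the inclusions into $A^X\cong\colim A_i^X$ (this last isomorphism because powers by the $\lambda$-presentable arity $X$ commute with $\lambda$-directed colimits, and $\cm$ is closed under $\lambda$-directed colimits in $\cv^\to$ by our standing hypothesis on $(\ce,\cm)$). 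Since each $\varphi_{A_i}\subseteq\psi_{A_i}$, passing to the colimit gives a factorization $\varphi_A\to\psi_A$ over $A^X$; because $\psi_A\rightarrowtail A^X$ is a monomorphism, this factorization exhibits $\varphi_A$ as a subobject of $\psi_A$, i.e.\ $A\models(\varphi\vdash\psi)$.

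\textbf{(2) $\lambda$-elementary subobjects.} This is immediate from Proposition~\ref{pos-ex-sequent}: if $f\colon K\to L$ is $\lambda$-elementary, $\lambda$-elementary morphisms are $\lambda$-p.e.\ stable, and $L\in\Mod(\mathbb T)$, then $L$ satisfies every sequent of $\mathbb T$ (all of which are $\lambda$-ary by hypothesis), hence so does $K$; thus $K\in\Mod(\mathbb T)$.

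\textbf{(3) Powers by $\ce$-stable $\cv$-connected objects.} Let $C\in\cv$ be $\ce$-stable and $\cv$-connected, let $A\in\Mod(\mathbb T)$, and fix a sequent $(\varphi\vdash\psi)$ in $\mathbb T$. We must show $\varphi_{A^C}\subseteq\psi_{A^C}$ inside $(A^C)^X\cong A^{C\otimes X}$. By Lemma~\ref{stable}(1), $\varphi_{A^C}\cong(\varphi_A)^C$ and $\psi_{A^C}\cong(\psi_A)^C$, compatibly with the inclusions into $(A^X)^C$. Applying the functor $(-)^C$ to the monomorphism $\varphi_A\rightarrowtail\psi_A$ yields a monomorphism $(\varphi_A)^C\rightarrowtail(\psi_A)^C$ (powers preserve monomorphisms), which is exactly the required inclusion $\varphi_{A^C}\subseteq\psi_{A^C}$. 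Hence $A^C\models(\varphi\vdash\psi)$.

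The only genuinely delicate point is making sure, in parts (1) and (3), that the isomorphisms supplied by Lemma~\ref{stable} are compatible with the $\cm$-subobject structure over $A^X$ (respectively $(A^C)^X$), so that an inclusion of subobjects really is transported to an inclusion of subobjects rather than merely to an abstract morphism; but this compatibility is built into the construction of the interpretations (they are defined by $(\ce,\cm)$-factorizations, pullbacks, and wide pullbacks over $A^X$, all of which the relevant limit/colimit functor respects under our hypotheses), and Lemma~\ref{stable} already packages exactly this. Part (2) carries no obstacle at all, being a direct citation.
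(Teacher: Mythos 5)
Your proposal is correct and follows exactly the same route as the paper, which proves (1) from Lemma~\ref{stable}(2), (2) from Proposition~\ref{pos-ex-sequent}, and (3) from Lemma~\ref{stable}(1); you have merely spelled out the details of how the isomorphisms of Lemma~\ref{stable} transport the subobject inclusions, which the paper leaves implicit.
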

\begin{proof}
(1) follows from Lemma~\ref{stable}(2).

(2) follows from Proposition~\ref{pos-ex-sequent}.

(3) follows from Lemma~\ref{stable}(1).
\end{proof}


\section{Enriched cone-injectivity classes}

In this section we compare $\cv$-categories of models of basic theories with enriched cone-injectivity classes, that we introduce below.

\begin{defi}
Given a cone $h=(h_j\colon A\to B_j)_{\in J}$  in a $\cv$-category $\ck$, we say that an object $K$ is $h$-injective if and only if the map
$$
\sum\limits_{j\in J} \ck(B_j,K)\to\ck(A,K)
$$
induced by $\ck(h_j,K):\ck(B_j,K )\to\ck(A,K)$, is in $\ce$.
Given a set $\ch$ of cones between $\lambda$-presentable $\mathbb L$-structures, an $\mathbb L$-structure $K$ is $\ch$-injective if it is 
injective to every $h\in\ch$. Such classes of $\mathbb L$-structures are called $\lambda$-cone-injectivity classes, or 
$(\lambda,\ce)$-cone-injectivity classes if we want to stress $\ce$.
\end{defi}

Given a cone $h$ between $\lambda$-presentable $\mathbb L$-structures $A$ and $B_j$, we say that a regular sequent $\iota_h$ of $\mathbb L_{\lambda\lambda}$ is a \textit{cone-injectivity sequent} for $h$ if a $\mathbb L$-structure $K$ is $h$-cone-injective if and only if $K\models\iota_h$. 

In order to fully relate basic theories and enriched cone-injectivity, we will need presentation formulas of $\mathbb L$-structures. For this, in some results we will make use of Assumption \cite[6.1]{RTe2}. For reader's convenience, we reproduce it here.

\begin{assume}\label{assumption} 
	We fix a proper enriched factorization system $(\ce_0,\cm_0)$ on $\cv$ for which $\cm_0$ is closed in $\cv^\to$ under $\lambda$-filtered colimits, and we assume that either of the following two conditions holds:\begin{itemize}
		\item $\LL=\RR$ is a $\lambda$-ary relational language;
		\item $\mathbb L=$ is a $\lambda$-ary language whose function symbols have $\ce_0$-stable input arities and relation symbols have $\ce_0$-stable arities.  Moreover $\cv_\lambda$ has an $\ce_0$-generator of $\ce_0$-projective objects.
	\end{itemize}
We also fix an enriched factorization system $(\ce,\cm)$, such that $\ce_0\subseteq\ce$ and for which the inclusion $ J \colon \Str(\mathbb L)\hookrightarrow\Str(\mathbb L)^0$ preserves $\lambda$-presentable objects. Structures and formulas will be considered with respect to the factorization system $(\ce,\cm)$.\\
\end{assume}

For a guide on how to use this assumption see the beginning of \cite[Section~6]{RTe2}. In most cases the two factorization systems can be taken to be the same; this is not the case for in $\Met$ and $\Ban$ where we take $(\ce_0,\cm_0)$=(Surj,Inj) and $(\ce,\cm)$=(dense, isometry) for languages whose arities are discrete.
 
\begin{rem}\label{coincide}
Under Assumption~\ref{assumption}, $\lambda$-elementary and $(\lambda,\ce)$-pure morphisms coincide (see \cite[6.6]{RTe2}).
\end{rem}

\begin{lemma}\label{lambda-stable-categorically}
	Under Assumption~\ref{assumption}, a $\lambda$-elementary morphisms is $\lambda$-p.e.\ stable in $\Str(\LL)$ if and only if the following property holds: for any $(\lambda,\ce)$-pure morphism $f\colon K\to L$ and any $\lambda$-small cone $(g_j\colon A\to B_j)_{j\in J}$, with $A,B_j\in\Str(\LL)_\lambda$, the bottom square below
	\begin{center}
		\begin{tikzpicture}[baseline=(current  bounding  box.south), scale=2]
			
			\node (a0) at (0,1.6) {$\sum_{j\in J}\Str(\LL)(B_j,K)$};
			\node (b0) at (2.5,1.6) {$\sum_{j\in J}\Str(\LL)(B_j,L)$};
			\node (a1) at (0,0.8) {$\bigvee_{j\in J}\Str(\LL)(B_j,K)$};
			\node (b1) at (2.5,0.8) {$\bigvee_{j\in J}\Str(\LL)(B_j,L)$};
			\node (a0') at (0.2,0.55) {$\lrcorner$};
			\node (a2) at (0,0) {$\Str(\LL)(A,K)$};
			\node (b2) at (2.5,0) {$\Str(\LL)(A,L)$};
			
			\path[font=\scriptsize]
			
			(a0) edge [->] node [above] {} (b0)
			(a1) edge [->] node [above] {} (b1)
			(a2) edge [->] node [below] {$\Str(\LL)(A,f)$} (b2)
			
			(a0) edge [->>] node [left] {$\ce$} (a1)
			(a1) edge [>->] node [left] {$\cm$} (a2)
			(b0) edge [->>] node [right] {$\ce$} (b1)
			(b1) edge [>->] node [right] {$\cm$} (b2);
		\end{tikzpicture}	
	\end{center}
	is a pullback in $\cv$; here the objects in the middle are the $(\ce,\cm)$ factorizations of the maps induced by precomposition with the $g_j$'s.
\end{lemma}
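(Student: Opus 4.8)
The plan is to translate between the syntactic condition of Definition~\ref{p.e.stable} and the categorical one by means of the \emph{presentation formulas} of $\lambda$-presentable $\LL$-structures, which are available under Assumption~\ref{assumption} (see \cite{RTe2}, Section~6). By Remark~\ref{coincide}, under Assumption~\ref{assumption} a morphism is $\lambda$-elementary if and only if it is $(\lambda,\ce)$-pure, so the two may be used interchangeably throughout. The dictionary is as follows: for a free structure $A=F(X)$ on an arity $X\in\cv_\lambda$ one has $\Str(\LL)(A,-)\cong(-)^X$ naturally; for a $\lambda$-presentable $A$ generated by $X_A\in\cv_\lambda$ there is a positive-primitive presentation formula $\pi_A(x)$ of arity $X_A$ with $\Str(\LL)(A,-)\cong(\pi_A)_{(-)}$ naturally, the mono $(\pi_A)_K\rightarrowtail K^{X_A}$ being restriction along the generators $F(X_A)\to A$. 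Moreover a positive-primitive formula $(\exists y\colon Y)\chi(x,y)$ of arity $X$ (with $\chi$ a conjunction of atomics, $Y\in\cv_\lambda$, fewer than $\lambda$ conjuncts) is precisely the presentation formula, relative to the generating object $X+Y$, of the $\lambda$-presentable structure $B$ presented by $\chi$; under this identification the composite $(\chi)_K\rightarrowtail K^{X+Y}\xrightarrow{p_1}K^X$ whose $(\ce,\cm)$-factorization defines $((\exists y)\chi)_K$ is exactly the map $\Str(\LL)(B,K)\to\Str(\LL)(F(X),K)=K^X$ induced by $F(X)\to B$ hitting the $X$-generators.

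For ``categorical condition $\Rightarrow$ $\lambda$-p.e.\ stable'', take a $\lambda$-elementary $f\colon K\to L$ and a positive-existential $\psi=\bigvee_{j\in J}\psi_j$ in $\mathbb L_{\lambda\lambda}$ of arity $X$, each $\psi_j=(\exists y\colon Y_j)\chi_j(x,y)$ positive-primitive. Put $A=F(X)$, let $B_j$ be the $\lambda$-presentable structure presented by $\chi_j$, and let $g_j\colon A\to B_j$ be induced by the $X$-generators. By the dictionary, the $(\ce,\cm)$-factorization of $\Str(\LL)(g_j,K)$ is $\Str(\LL)(B_j,K)\twoheadrightarrow(\psi_j)_K\rightarrowtail K^X$; since $\ce$, being the left class of a factorization system, is closed under coproducts, the image of $\sum_j\Str(\LL)(B_j,K)\to\Str(\LL)(A,K)=K^X$ — that is, the middle object $\bigvee_j\Str(\LL)(B_j,K)$ of the diagram — is exactly $\psi_K$. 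Hence the bottom square of the lemma for this cone is the square of Definition~\ref{p.e.stable} for $\psi$ and $f$; being a pullback by hypothesis, $f$ is p.e.\ stable with respect to $\psi$, and as $\psi$ was arbitrary the forward implication follows (using only cones with free vertex).

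For ``$\lambda$-p.e.\ stable $\Rightarrow$ categorical condition'', let $f\colon K\to L$ be $(\lambda,\ce)$-pure and let $(g_j\colon A\to B_j)_{j\in J}$ be a $\lambda$-small cone of $\lambda$-presentable structures. Choose a $\lambda$-presentable generating object $X_A$ of $A$, and for each $j$ a $\lambda$-presentable generating object $W_j=X_A+Z_j$ of $B_j$ so that $g_j$ corresponds to the coprojection $\iota\colon X_A\hookrightarrow W_j$; let $\theta_j$ be the conjunction of atomics presenting $B_j$ over $W_j$, and set $\rho_j(x):=(\exists z\colon Z_j)\theta_j(x,z)$, of arity $X_A$. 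By the dictionary $\Str(\LL)(B_j,K)\cong(\theta_j)_K$, the map $\Str(\LL)(g_j,K)$ becomes $(\theta_j)_K\rightarrowtail K^{W_j}\xrightarrow{p_1}K^{X_A}$ composed with $\Str(\LL)(A,K)=(\pi_A)_K\rightarrowtail K^{X_A}$, and its image is $(\rho_j)_K\subseteq(\pi_A)_K$; taking coproducts and factorizing, the middle object $\bigvee_j\Str(\LL)(B_j,K)$ is identified with $\psi_K$, where $\psi:=\bigvee_{j\in J}\rho_j$ is positive-existential and in $\mathbb L_{\lambda\lambda}$, and moreover $\psi_K\rightarrowtail(\pi_A)_K\rightarrowtail K^{X_A}$. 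Now $\lambda$-p.e.\ stability applied to $\psi$ says the outer rectangle $\psi_K\to\psi_L$ over $K^{X_A}\to L^{X_A}$ is a pullback, and $\lambda$-elementarity of $f$ applied to the positive-primitive $\pi_A$ says the lower rectangle $(\pi_A)_K\to(\pi_A)_L$ over $K^{X_A}\to L^{X_A}$ is a pullback. Since the outer left vertical $\psi_K\rightarrowtail K^{X_A}$ (and likewise the right one) factors through $(\pi_A)_{(-)}$, the lower rectangle is the bottom part of the outer one, so by pullback pasting the top square $\psi_K\to\psi_L$ over $(\pi_A)_K\to(\pi_A)_L$ is a pullback; under the identifications above, and the naturality of $\Str(\LL)(A,-)\cong(\pi_A)_{(-)}$ (which turns $(\pi_A)_K\to(\pi_A)_L$ into $\Str(\LL)(A,f)$), this is exactly the square in the statement.

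The conceptual content is just the pullback-pasting step together with Remark~\ref{coincide}; I expect the main work — and the only delicate point — to be the careful bookkeeping with presentation formulas, namely checking that $\bigvee_j\Str(\LL)(B_j,-)$, as a subobject of $\Str(\LL)(A,-)$, coincides with the interpretation $(\bigvee_j\rho_j)_{(-)}$ of the disjunction of the associated positive-existential formulas, and that the maps induced by the $g_j$'s match those produced by the interpretations of $\exists$ and $\bigvee$. This is where Assumption~\ref{assumption} is used, since it is precisely what guarantees the existence and naturality of presentation formulas and the compatibility of $\Str(\LL)$ with the powers and $(\ce,\cm)$-factorizations involved.
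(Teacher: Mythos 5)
Your proposal is correct and follows essentially the same route as the paper: both directions are handled by translating cones into positive-existential formulas via presentation formulas (the paper cites \cite[Corollary~5.4]{RTe2} and \cite[Lemma~5.11]{RTe2} for the dictionary you rebuild by hand), identifying $\bigvee_j\Str(\LL)(B_j,-)$ with the interpretation of the associated disjunction, and invoking Remark~\ref{coincide}. Your explicit pullback-pasting step through $(\pi_A)_{(-)}$ is a detail the paper leaves implicit, but it is the same argument.
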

\begin{proof}
	As we observed in Remark~\ref{coincide} above, $\lambda$-elementary and $(\lambda,\ce)$-pure morphisms coincide in $\Str(\LL)$. Moreover, note that the object $\bigvee_{j\in J}\Str(\LL)(B_j,L)$ can be equivalently defined as follows: first take the $(\ce,\cm)$-factorization $\cp(g_j,L)$ of each $\Str(\LL)(g_j,L)$, then $\bigvee_{j\in J}\Str(\LL)(B_j,L)$ coincides with the $(\ce,\cm)$-factorization of the induced map 
	$$  \sum_{j\in J}\cp(g_j,L)\longrightarrow \Str(\LL)(A,L). $$

	Fix now any $\lambda$-elementary morphism $f\colon K\to L$. Assume first that $f$ is $\lambda$-p.e.\ stable; then by~\cite[Lemma~5.11]{RTe2} for any $j\in J$ we can find a positive primitive formula $\psi_j$ for which $(\psi_j)_L$ coincides with the $(\ce,\cm)$-factorization of $\Str(\LL)(g_j,L)$ for any $L\in\Str(\LL)$; that is
	$$ (\psi_j)_L\cong \cp(g_j,L). $$
	By the observation above and how disjunction is interpreted it follows that 
	$$ (\bigvee_{j\in J}\psi_j)_L\cong \bigvee_{j\in J}\Str(\LL)(B_j,L)$$ 
	for any $L\in\Str(\LL)$. Thus, the fact that the required square is a pullback follows by definition of $\lambda$-p.e.\ stability of $f$.
	
	Assume conversely that $f\colon K\to L$ is $\lambda$-elementary and satisfies the condition of the Lemma. Consider a disjunction $\bigvee_{j\in J}\psi_j$ of positive-primitive formulas in $\LL_{\lambda\lambda}$, of arity $X\in\cv_\lambda$. Then for any $j\in J$ we have that
	$$\psi_j(x):= (\exists y)\varphi_j(x,y)$$
	where $\varphi_j$ is a $\lambda$-ary conjunction of atomic formulas. By~\cite[Corollary~5.4]{RTe2} for any $j\in J$ there is a $\lambda$-presentable object $B_j\in\Str(\LL)$ such that $\varphi_j$ is a presentation formula for $B_j$; that is $(\varphi_j)_{(-)}\cong \Str(\LL)(A_j,-)$. Under this identification, the $\cv$-natural transformation sending $L\in\Str(\LL)$ to 
	$$ \Str(\LL)(B_j,-)\cong(\varphi_j)_{L}\rightarrowtail L^{X+Y_j}\xrightarrow{\pi_1} L^X\cong\Str(\LL)(FX,L) $$
	corresponds, thanks to Yoneda, to a morphism $g_j\colon FX\to B_j$. In particular, following the notation above , it follows that
	$$ (\psi_j)_L\cong \cp(g_j,L). $$
	Thus we also have that 
	$$ (\bigvee_{j\in J}\psi_j)_L\cong \bigvee_{j\in J}\Str(\LL)(B_j,L).$$
	Thus the $\lambda$-p.e.\ stability of $f$ can be seen as a consequence of the square in the statement being a pullback.
\end{proof}

\begin{exam}
	Let us consider $\cv=\Met$ and $\ce$ consisting of the dense morphisms of metric spaces. We show that $\lambda$-elementary morphisms are $\lambda$-p.e.\ stable under Assumption~\ref{assumption}.
	Note that neither \ref{pos-ex} nor \ref{pos-ex-additive} apply to this case, but we can still use the lemma above.
	
	We shall prove that the condition of Lemma~\ref{lambda-stable-categorically} holds more generally when $\Str(\LL)$ is replaced by any locally $\lambda$-presentable $\Met$-category. Consider a locally $\lambda$-presentable $\Met$-category $\ck$ and a cone
$(g_j:A\to B_j)_{j\in J}$ of morphisms in $\ck$ with $\lambda$-presentable domain and codomain. 
	
	We need to show that for any $f\colon K\to L$ that is $(\lambda,\ce)$-pure, the square
	\begin{center}
		\begin{tikzpicture}[baseline=(current  bounding  box.south), scale=2]
			
			\node (a1) at (0,0.8) {$\bigvee_{j\in J}\ck(B_j,K)$};
			\node (b1) at (2.5,0.8) {$\bigvee_{j\in J}\ck(B_j,L)$};
			\node (a2) at (0,0) {$\ck(A,K)$};
			\node (b2) at (2.5,0) {$\ck(A,L)$};
			
			\path[font=\scriptsize]
			
			(a1) edge [->] node [above] {} (b1)
			(a2) edge [->] node [below] {$\ck(A,f)$} (b2)
			
			(a1) edge [>->] node [left] {$\cm$} (a2)
			(b1) edge [>->] node [right] {$\cm$} (b2);
		\end{tikzpicture}	
	\end{center}
	is a pullback. An element of the pullback is a morphism $u\colon A\to K$ such that for any $\epsilon>0$ there exists $j\in J$ and $v_j\colon B_j\to L$ for which the square
	\begin{center}
		\begin{tikzpicture}[baseline=(current  bounding  box.south), scale=2]
			
			\node (a0) at (0,0.8) {$A$};
			\node (b0) at (1,0.8) {$B_j$};
			\node (c0) at (0,0) {$K$};
			\node (d0) at (1,0) {$L$};
			
			\path[font=\scriptsize]
			
			(a0) edge [->] node [above] {$g_j$} (b0)
			(a0) edge [->] node [left] {$u$} (c0)
			(b0) edge [->] node [right] {$v_j$} (d0)
			(c0) edge [->] node [below] {$f$} (d0);
		\end{tikzpicture}	
	\end{center}
	commutes up to $\epsilon$. Since $f$ is a weakly $\lambda$-pure morphism by \cite[Proposition~6.9]{RTe1}, it follows that, in the setting above, there exists also $t_j\colon B_j\to K$ such that $ u \sim_{2\epsilon} t_j \circ g_j$. Since this holds for arbitrarily small $\epsilon$, then it follows that $u\in \bigvee_{j\in J}\ck(B_j,K)$, showing that the considered square is a pullback.
\end{exam}

In the following result we characterize $\ce$-cone-injectivity classes in terms of certain basic theories where the formula on the left of the sequent is also repeated on the right-hand-side. This is a similar situation to that of \cite[Theorem~7.6]{RTe2}, where the formula on the left needs to be repeated on the right. This happens because we are not assuming pullback stability of $\ce$.

\begin{propo}\label{mod<->inj}
Consider the following conditions for a given full subcategory $\ca$ of $\Str(\LL)$:\begin{enumerate}
	\item $\ca$ is a $(\lambda,\ce)$-cone-injectivity class in $\Str(\LL)$;
	\item $\ca\cong\Mod(\TT)$ for a $\mathbb L_{\lambda\lambda}$-theory $\TT$ whose axioms are of the form
	$$(\forall x)(  \varphi(x) \vdash \bigvee_{j\in J}(\exists y_j)(\psi_j(x,y_j)\wedge \varphi(x))),$$
	where $\varphi$ and $\psi$ are conjunctions of atomic formulas of
	$\mathbb L_{\lambda\lambda}$.
\end{enumerate}
Then $(2)\Rightarrow(1)$ always holds, and $(1)\Rightarrow (2)$ holds under Assumption~\ref{assumption}.
\end{propo}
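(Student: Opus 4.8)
The plan is to establish the two implications separately, with $(2)\Rightarrow(1)$ being the routine direction and $(1)\Rightarrow(2)$ requiring the machinery of presentation formulas from~\cite{RTe2}.

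For $(2)\Rightarrow(1)$: given an axiom of the displayed form, I would produce a cone-injectivity sequent witness for it. Since $\varphi$ and each $\psi_j$ are conjunctions of atomic formulas in $\mathbb L_{\lambda\lambda}$, by~\cite[Corollary~5.4]{RTe2} there is a $\lambda$-presentable structure $A$ with $\varphi$ a presentation formula for $A$, i.e. $\varphi_{(-)}\cong\Str(\LL)(A,-)$, and for each $j$ a $\lambda$-presentable structure $B_j$ presented by $\psi_j(x,y_j)\wedge\varphi(x)$. The projection $\pi_1$ onto the $\varphi(x)$-part of the arity induces, via Yoneda, a morphism $g_j\colon A\to B_j$ (this is exactly the construction appearing in the proof of Lemma~\ref{lambda-stable-categorically}). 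I would then check that for the cone $h=(g_j\colon A\to B_j)_{j\in J}$, a structure $K$ is $h$-injective precisely when $K\models$ the axiom: the map $\sum_j\Str(\LL)(B_j,K)\to\Str(\LL)(A,K)$ is, after identification, the map $\sum_j(\exists y_j)(\psi_j\wedge\varphi)_K\to\varphi_K$, and its being in $\ce$ is exactly the requirement that the $\cm$-subobject $\bigl(\bigvee_j(\exists y_j)(\psi_j\wedge\varphi)\bigr)_K\rightarrowtail\varphi_K$ be an isomorphism, i.e.\ $K$ satisfies the sequent (using that the left formula is repeated on the right, the relevant subobjects sit inside $\varphi_K$ rather than $K^X$, so orthogonality of $(\ce,\cm)$ gives the equivalence cleanly). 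Running this over all axioms of $\TT$ exhibits $\Mod(\TT)$ as a $(\lambda,\ce)$-cone-injectivity class.

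For $(1)\Rightarrow(2)$ under Assumption~\ref{assumption}: start from a set $\ch$ of cones $h=(h_j\colon A\to B_j)_{j\in J}$ between $\lambda$-presentable $\LL$-structures defining $\ca$. Under the assumption, $\lambda$-presentable objects of $\Str(\LL)$ are finitely presentable in the sense needed, and by~\cite[Corollary~5.4]{RTe2} every $\lambda$-presentable structure admits a presentation formula: pick a conjunction of atomic formulas $\varphi(x)$ of $\mathbb L_{\lambda\lambda}$ with $\varphi_{(-)}\cong\Str(\LL)(A,-)$ of arity $X$ (where $A=FX/(\text{relations})$), and similarly $\chi_j(x,y_j)$ presenting $B_j$. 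The map $h_j\colon A\to B_j$ corresponds via Yoneda to an element of $(\chi_j)_A\rightarrowtail A^{X+Y_j}\to A^X$, hence (tracing through the presentation) each $B_j$ is presented by a formula of the form $\psi_j(x,y_j)\wedge\varphi(x)$ for a conjunction of atomic formulas $\psi_j$ — this is the step that forces the repetition of $\varphi$ on the right. One then reads off from the interpretation of disjunction and existential quantification that $K$ is $h$-injective iff the map $\sum_j\bigl((\exists y_j)(\psi_j\wedge\varphi)\bigr)_K\to\varphi_K$ lands in $\ce$ after factorization, i.e.\ iff $K\models(\forall x)(\varphi(x)\vdash\bigvee_j(\exists y_j)(\psi_j(x,y_j)\wedge\varphi(x)))$. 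Taking $\TT$ to be the collection of these sequents over all $h\in\ch$ yields $\ca\cong\Mod(\TT)$.

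The main obstacle is the identification in $(1)\Rightarrow(2)$ of an arbitrary cone morphism $h_j\colon A\to B_j$ between $\lambda$-presentable structures with the data of a conjunction of atomic formulas $\psi_j$ such that $B_j$ is presented by $\psi_j(x,y_j)\wedge\varphi(x)$: one must use Assumption~\ref{assumption} to guarantee that presentation formulas exist and have the right arity bounds, and check carefully that the subobject of $A^X$ picked out by $h_j$ via Yoneda is compatible with the presentation of $A$ so that the conjunction with $\varphi$ is genuinely forced (rather than merely convenient). The matching of $\ce$-membership of the induced map on hom-objects with the isomorphism condition defining satisfaction of the sequent is then a direct consequence of how disjunction is interpreted (the $(\ce,\cm)$-factorization in clause (5) of the interpretation definition), combined with the fact that all the relevant subobjects are computed inside $\varphi_K\cong\Str(\LL)(A,K)$.
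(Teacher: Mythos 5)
Your proposal is correct and follows essentially the same route as the paper: both directions go through presentation formulas from \cite[Corollary~5.4, Lemma~5.11]{RTe2}, with the cone morphisms $g_j\colon A\to B_j$ obtained from the conjunctions $\psi_j\wedge\varphi$ (the paper realizes $B_j$ as an explicit pushout, you equivalently take the structure presented by $\psi_j\wedge\varphi$ and apply Yoneda), and with satisfaction of the sequent matched to $\ce$-membership of $\sum_j\Str(\LL)(B_j,K)\to\Str(\LL)(A,K)$ via orthogonality of the factorization system. The ``main obstacle'' you flag in $(1)\Rightarrow(2)$ --- encoding $h_j$ itself into a conjunction of atomic formulas containing $\varphi$ as a conjunct --- is exactly what \cite[Lemma~5.11]{RTe2} supplies under Assumption~\ref{assumption}, which is how the paper handles it as well.
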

\begin{proof}
$(1)\Rightarrow(2)$. Assume \ref{assumption} and let $(h_j\colon A\to B_j)_{j\in J}$ be a cone of $\lambda$-presentable $\mathbb L$-structures; then we can consider $\pi^A(x)$, $\pi_j^B(y)$, and $\chi_j(x)$ as in \cite[5.11]{RTe2}. We shall prove that the sequent 
$$ 
\iota_h:= (\forall x)(\pi^A(x)\vdash \bigvee\limits_{j\in J}\chi_j(x) )
$$
is a cone-injectivity sequent for $h$.  Note that the other sequent is satisfied by any $\mathbb L$-structure $K$ using \cite[5.11]{RTe2}.
	
Let $K$ be any $\mathbb L$-structure; then $K$ satisfies $\iota_h$ if and only if  $$\bigvee\limits_{j\in J}(\chi_j)_K\cong \pi^A_K$$ as $\cm$-subobjects of $K^{Z_A}$, where $Z_A$ is from the proof of \cite[5.11]{RTe2} (since $K$ always satisfies the other sequent), if and only if the map 
$$
\sum\limits_{j\in J}\Str(\mathbb L)(B_j,K)\to\Str(\mathbb L)(A,K)
$$ 
is in $\ce$, if and only if $K$ is cone-injective with respect to $h$.

$(2)\Rightarrow(1)$. We adapt the proof of \cite[Theorem~7.6]{RTe2}. It is enough to prove that given any sequent
$$(\forall x)(  \varphi(x) \vdash \bigvee_{j\in J}(\exists y_j)(\psi_j(x,y_j)\wedge \varphi(x))),$$
where $\varphi$ and $\psi$ are conjunctions of atomic formulas of
$\mathbb L_{\lambda\lambda}$, there exists a family of morphisms $(g\colon A\to B_j)_{j\in J}$ between $\lambda$-presentable $\mathbb L$-structures for which: $K$ is $\ce$-injective with respect to the $(g_j)_j$ if and only if $K\models \varphi\vdash \bigvee_{j\in J}(\exists y_j)(\psi_j\wedge\varphi)$, for any $K\in\Str(\mathbb L)$.

By \cite[Corollary~5.4]{RTe2} there exists $\lambda$-presentable $\mathbb L$-structures $A$ and $C_j$ for which $\varphi(x)$ and $\psi_j(x,y_j)$ are presentation formulas for $A$ and $C_j$ respectively (for any $j\in J$); these come together with maps $e\colon F(X)\to A$ and $e_j'\colon F(X+Y_j)\to C$. For each $j\in J$, consider now the pushout $B_j$ of $e$ along $e_j'F(i_X)$, as depicted below.
\begin{center}
	\begin{tikzpicture}[baseline=(current  bounding  box.south), scale=2]
		
		\node (a0) at (0.1,0.9) {$F(X)$};
		\node (b0) at (1.2,0.9) {$A$};
		\node (c0) at (0.1,0) {$F(X+Y_j)$};
		\node (d0) at (1.2,0) {$C_j$};
		\node (1) at (2,0.45) {$B_j$};
		
		\path[font=\scriptsize]
		
		(a0) edge [->] node [left] {$F(i_X)\ $} (c0)
		(a0) edge [->] node [above] {$e$} (b0)
		(c0) edge [->] node [below] {$e_j'$} (d0)
		(b0) edge [->] node [above] {$\ g_j$} (1)
		(d0) edge [->] node [left] {} (1);
	\end{tikzpicture}	
\end{center} 
By homming into an $\mathbb L$-structure $K$, using the definition of presentation formula, and taking the $(\ce,\cm)$ factorizations of the arrows corresponding to $g_je F(i_1)$, we obtain the pullback diagram below.
\begin{center}
	\begin{tikzpicture}[baseline=(current  bounding  box.south), scale=2]
		
		\node (0) at (-2.5,0.7) {$\Str(\LL)(B_j,K)\cong (\psi_j\wedge\varphi)_K$};
		\node (a0) at (0,1.4) {$\Str(\LL)(A,K)\cong \varphi_K$};
		\node (b0) at (2,1.4) {$K^X$};
		\node (c0) at (0.1,0) {$\Str(\LL)(C_j,K)\cong (\psi_j)_K$};
		\node (d0) at (2,0) {$K^{X+Y_j}$};
		\node (1) at (0.4,0.7) {$(\exists y_j)(\psi_j\wedge\varphi)_K$};
		
		\path[font=\scriptsize]
		
		(0) edge [->] node [above] {$\Str(\LL)(g_j,K)\ \ \ \ \ \ \ \ $} (a0)
		(0) edge [->] node [below] {} (c0)
		(a0) edge [>->] node [above] {} (b0)
		(c0) edge [>->] node [below] {} (d0)
		(d0) edge [->] node [left] {} (b0)
		
		(0) edge [->>] node [left] {} (1)
		(1) edge [>->] node [left] {} (b0);
	\end{tikzpicture}	
\end{center} 
Now, we can focus on the top square and take coproducts over $j\in J$, as well as the join of the formulas in the middle:
\begin{center}
	\begin{tikzpicture}[baseline=(current  bounding  box.south), scale=2]
		
		\node (0) at (-2.5,0.7) {$\sum_{j\in J}\Str(\LL)(B_j,K)$};
		\node (a0) at (0,1.4) {$\Str(\LL)(A,K)\cong \varphi_K$};
		\node (b0) at (2,1.4) {$K^X$};
		\node (1) at (0.4,0.7) {$\bigvee_{j\in J}(\exists y_j)(\psi_j\wedge\varphi)_K$};
		
		\path[font=\scriptsize]
		
		(0) edge [->] node [above] {$(\Str(\LL)(g_j,K))_{j\in J}\hspace{40pt}$} (a0)
		(a0) edge [>->] node [above] {} (b0)
		
		(0) edge [->>] node [left] {} (1)
		(1) edge [>->] node [left] {} (b0);
	\end{tikzpicture}	
\end{center} 

Note that by orthogonality of the factorization system we already have an inclusion $\bigvee_{j\in J}(\exists y_j)(\psi_j\wedge\varphi)_K\subseteq \varphi_K$.

Thus, $K$ is $\ce$-injective to the $(g_j)_j$'s if and only if $(\Str(\LL)(g_j,K))_{j\in J}$ is in $\ce$, if and only if $\bigvee_{j\in J}(\exists y_j)(\psi_j\wedge\varphi)_K=\varphi_K$, if and only if $K\models \varphi\vdash \bigvee_{j\in J}(\exists y_j)(\psi_j\wedge\varphi)$.
\end{proof}

\section{Cone-reflectivity and closure properties} 

In this section we further compare $\cv$-categories of models of basic theories with the notion of {\em cone $\ce$-reflectivity}, that we introduce here.

\begin{defi}
	Given a $\cv$-category $\ck$ we say that a full subcategory $J\colon \ca\hookrightarrow \ck$ is {\em cone $\ce$-reflective} if for each $K\in\ck$ there exists a family of morphisms $\{q_i\colon K\to JA_i\}_{i\in I_K}$, with $A_i\in\ca$, such that the induced morphism
	\begin{center}
		
		\begin{tikzpicture}[baseline=(current  bounding  box.south), scale=2]
			
			\node (c) at (0,0) {$\sum_{i\in I_K} \ca(A_i,-)$};
			\node (d) at (1.8,0) {$\ck(K,J-)$};

			\path[font=\scriptsize]
			
			(c) edge [->>] node [above] {} (d);
		\end{tikzpicture}
	\end{center}
	lies in $\ce$. 
\end{defi}

Below, by an accessible $\cv$-category we mean what is called {\em conically accessible} in \cite{LT23}; these are $\cv$-categories freely generated by small $\cv$-categories under $\lambda$-filtered colimits, for some $\lambda$. First we prove that closure properties imply accessibility:

\begin{rem}
	Note that we could have also considered the other (stronger) notion of accessibility of~\cite{LT23} (first introduced in~\cite{BQR}) which involves $\lambda$-flat colimits rather than $\lambda$-filtered ones. All the results below hold with this other notion of accessibility (by also replacing $\lambda$-filtered colimits with $\lambda$-flat ones everywhere); this is because the $\cv$-categories of models of basic $\mathbb L_{\lambda\lambda}$-theories can be proved to be closed under $\lambda$-flat colimits as well. Nonetheless, for most of the examples we are interested in, the two notions of accessibility coincide by~\cite{LT22}, so it makes sense to consider the more traditional notion.
\end{rem}

\begin{propo}\label{closure->accessible}
	Let $\ck$ be a locally presentable $\cv$-category. Every subcategory of $\ck$ that is closed under $\lambda$-filtered colimits and $(\lambda,\ce)$-pure subobjects (for some $\lambda$) is accessible. 
\end{propo}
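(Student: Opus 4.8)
The plan is to produce, for a suitably large regular cardinal $\mu$, an essentially small set $\cg$ of $\mu$-presentable objects of $\ca$ such that every object of $\ca$ is a conical $\mu$-filtered colimit of objects of $\cg$, and then to invoke the recognition theorem for conically accessible $\cv$-categories from~\cite{LT23}. Writing $\ck_\mu$ for the full subcategory of $\mu$-presentable objects of $\ck$, the candidate set will simply be $\cg:=\ca\cap\ck_\mu$, which is essentially small because $\ck_\mu$ is.

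First I would normalise the cardinal. Since $\ck$ is locally presentable it is locally $\mu$-presentable for all sufficiently large regular $\mu$; closure under $\lambda$-filtered colimits entails closure under $\mu$-filtered colimits for every regular $\mu\geq\lambda$; and, because the defining lifting condition for $(\lambda,\ce)$-purity ranges only over $\lambda$-presentable test objects, a $(\mu,\ce)$-pure morphism is $(\lambda,\ce)$-pure whenever $\mu\geq\lambda$, so closure under $(\lambda,\ce)$-pure subobjects entails closure under $(\mu,\ce)$-pure subobjects. Hence we may freely enlarge $\lambda$ and assume from now on that $\ck$ is locally $\lambda$-presentable.

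The heart of the argument is the following enriched counterpart of the classical fact about pure subobjects: there is a regular cardinal $\mu\sgreat\lambda$ (sharply larger) such that every object $A$ of $\ck$ is a $\mu$-filtered colimit $A\cong\colim_s A_s$ of $(\lambda,\ce)$-pure subobjects $A_s\rightarrowtail A$ with each $A_s$ in $\ck_\mu$. I would obtain this from the theory of $(\lambda,\ce)$-pure morphisms developed in~\cite{RTe1} together with the usual argument: present $A$ as the canonical $\lambda$-filtered colimit of $\lambda$-presentable objects indexed by the comma category $\ck_\lambda\downarrow A$, note that this comma category is the $\mu$-filtered union of its $\mu$-small full subcategories $D_s$ that are closed under the $\lambda$-small colimits witnessing $(\lambda,\ce)$-purity, and check that for such a $D_s$ the object $A_s:=\colim_{D_s}$ is $\mu$-presentable and the comparison map $A_s\to A$ is $(\lambda,\ce)$-pure. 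Applying this with $A\in\ca$, each $A_s$ lies in $\ca$ by closure under $(\lambda,\ce)$-pure subobjects and in $\ck_\mu$ by construction, hence in $\cg$; and since $\ca$ is closed under $\mu$-filtered colimits, the inclusion $\ca\hookrightarrow\ck$ preserves them, so $A\cong\colim_s A_s$ holds in $\ca$ as a $\mu$-filtered colimit of objects of $\cg$. I expect this middle step to be the main obstacle: one must choose the sub-diagrams $D_s$ closed under exactly the colimits entering the definition of $(\lambda,\ce)$-purity, and one needs $(\lambda,\ce)$-purity to be compatible with those colimits in the factorization-enriched sense; this compatibility is what we import from~\cite{RTe1}.

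It then remains to assemble the pieces. Each $G\in\cg$ is $\mu$-presentable in $\ca$ because the inclusion $\ca\hookrightarrow\ck$ preserves $\mu$-filtered colimits and $G$ is $\mu$-presentable in $\ck$. Combining this with the fact, just established, that $\ca$ has $\mu$-filtered colimits and that every object of $\ca$ is a $\mu$-filtered colimit of the essentially small family $\cg$, the recognition theorem for conically accessible $\cv$-categories of~\cite{LT23} shows that $\ca$ is (conically) $\mu$-accessible; moreover the inclusion into $\ck$ preserves $\mu$-filtered colimits, so $\ca$ is accessibly embedded. Everything beyond the purity step is routine accessibility bookkeeping.
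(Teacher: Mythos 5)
Your argument is correct in outline but takes a genuinely different, and considerably longer, route than the paper. The paper's proof is a three-line reduction to the ordinary setting: by \cite[Corollary~3.10]{RTe1} ordinary pure morphisms are among the $(\lambda,\ce)$-pure ones, so closure under $(\lambda,\ce)$-pure subobjects already gives closure under ordinary pure subobjects; then \cite[Corollary~2.36]{AR} makes the underlying category $\ca_0$ accessible, and \cite[Corollary~3.23]{LT23} lifts this to conical accessibility of the $\cv$-category $\ca$. You instead propose to redo the classical decomposition theorem (every object is a $\mu$-filtered colimit of $\mu$-presentable pure subobjects) directly for $(\lambda,\ce)$-pure morphisms, by choosing subdiagrams $D_s$ of the canonical diagram closed under the data witnessing $(\lambda,\ce)$-purity. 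The step you correctly flag as the main obstacle is real: $(\lambda,\ce)$-purity is an approximate lifting condition mediated by $\ce$, not a plain lifting property, so verifying that the comparison maps $A_s\to A$ are $(\lambda,\ce)$-pure by hand is delicate and is not obviously available in \cite{RTe1} in that form. However, your key lemma is true for a cheaper reason, namely the same purity comparison the paper uses: the classical decomposition produces ordinary $\lambda$-pure subobjects, and these are automatically $(\lambda,\ce)$-pure, so you could drop the closed-subdiagram construction entirely. What your approach buys, if carried out, is an intrinsically enriched proof that does not pass through the underlying category; what the paper's approach buys is brevity and the avoidance of any new enriched decomposition theorem. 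The surrounding bookkeeping in your write-up (index raising, essential smallness of $\ca\cap\ck_\mu$, $\mu$-presentability of its objects in $\ca$, and the recognition theorem from \cite{LT23}) is all fine.
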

\begin{proof}
	Let $\ca$ be any such subcategory. By \cite[Corollary~3.10]{RTe1} $\ca$ is also closed under ordinary pure subobjects. It follows that $\ca_0$ is accessible by \cite[Corollary~2.36]{AR}, and hence $\ca$ is accessible as a $\cv$-category by \cite[Corollary~3.23]{LT23}.
\end{proof}

\begin{propo}\label{cone-refl}
	Let $\cg$ be an $\ce$-generator of $\cv_0$. Let $\ck$ be a locally presentable $\cv$-category and $J\colon \ca\hookrightarrow\ck$ be an accessible and accessibly embedded subcategory of $\ck$ which is closed under $\cg$-powers. Then $\ca$ is cone $\ce$-reflective in $\ck$.
\end{propo}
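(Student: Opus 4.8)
The plan is to reduce the claim to a known cone-reflectivity statement on the underlying ordinary categories, and then to promote it to the enriched setting using the fact that $\cg$ is an $\ce$-generator together with the closure of $\ca$ under $\cg$-powers. First I would recall that since $\ck$ is a locally presentable $\cv$-category, its underlying ordinary category $\ck_0$ is locally presentable, and since $\ca$ is accessible and accessibly embedded, $\ca_0$ is an accessible, accessibly embedded subcategory of $\ck_0$ (this is the content of the correspondence between conical accessibility and ordinary accessibility used already in Proposition~\ref{closure->accessible}, via \cite[Corollary~3.23]{LT23}). In this situation a standard fact from the theory of accessible categories (see \cite{AR}) tells us that $\ca_0$ is \emph{cone-reflective} in $\ck_0$ in the ordinary ($\Set$-based, surjection) sense: for each $K\in\ck$ there is a set of morphisms $\{q_i\colon K\to JA_i\}_{i\in I_K}$ with $A_i\in\ca$ such that every morphism $K\to JA$ with $A\in\ca$ factors through some $q_i$, i.e. the induced map $\sum_{i}\ca_0(A_i,A')\to\ck_0(K,JA')$ is surjective for all $A'\in\ca$.

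Next I would upgrade this from $\Set$-surjectivity to membership in $\ce$ at the level of hom-objects in $\cv$. Fix $K\in\ck$ and the family $\{q_i\}_{i\in I_K}$ obtained above. I want to show that the $\cv$-natural transformation
$$ \sum_{i\in I_K}\ca(A_i,-)\longrightarrow\ck(K,J-) $$
has each component in $\ce$. Since $\cg$ is an $\ce$-generator of $\cv_0$, by the standard characterization it suffices to check that precomposition with any $G\in\cg$ sends this map to a surjection of sets; equivalently, that
$$ \cv_0\Big(G,\sum_{i\in I_K}\ca(A_i,A')\Big)\longrightarrow\cv_0\big(G,\ck(K,JA')\big) $$
is surjective for every $A'\in\ca$ and $G\in\cg$. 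Using the cotensor (power) adjunction, $\cv_0(G,\ck(K,JA'))\cong\ck_0(K,(JA')^G)$ and, because $\ca$ is closed under $\cg$-powers and $J$ is (at least) a full $\cv$-embedding that preserves powers, $(JA')^G\cong J(A'^G)$ with $A'^G\in\ca$; similarly $\cv_0(G,\ca(A_i,A'))\cong\ca_0(A_i,A'^G)$. Under these identifications the displayed map becomes exactly the ordinary cone-reflectivity map $\sum_i\ca_0(A_i,A'^G)\to\ck_0(K,J(A'^G))$ for the object $A'^G\in\ca$, which is surjective by the previous paragraph. Hence each component of the original map lies in $\ce$, which is precisely the assertion that $\ca$ is cone $\ce$-reflective in $\ck$.

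The main obstacle I expect is bookkeeping around the power objects: one must be careful that the set $I_K$ and the family $\{q_i\}$ can be chosen \emph{uniformly in $K$} (not depending on the test object $A'^G$), which is exactly what the ordinary cone-reflectivity statement for accessible, accessibly embedded subcategories provides — the cone depends only on $K$, and then the factorization property holds against all objects of $\ca$. A secondary point to verify carefully is that the isomorphism $(JA')^G\cong J(A'^G)$ is compatible with the comparison maps, i.e. that the hom-object map really does transport to the ordinary cone-reflectivity map after applying $\cv_0(G,-)$; this is a routine naturality check using the enriched Yoneda lemma and the definition of powers, but it is where a sign-free but slightly tedious diagram chase is needed. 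Once these two points are in place, the reduction to \cite{AR} and the $\ce$-generator criterion close the argument.
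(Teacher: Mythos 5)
Your reduction breaks down at the step where you upgrade $\Set$-surjectivity to membership in $\ce$. You invoke a ``standard characterization'' according to which a morphism of $\cv$ lies in $\ce$ as soon as $\cv_0(G,-)$ sends it to a surjection for every $G\in\cg$. But that is not what ``$\ce$-generator'' means in this paper: the definition is a \emph{covering} condition (every object of $\cv_\lambda$ receives a map in $\ce$ from some member of $\cg$), not a \emph{detection} condition for $\ce$. No such detection principle is available in general, and it fails in the motivating examples: for $\cv=\Met$ with $\ce$ the dense maps and $\cg$ consisting of (finite) discrete spaces, surjectivity of all $\cv_0(G,f)$ is a genuinely different condition from density of $f$, and nothing in the hypotheses (which do not even require $\ce$ to contain the surjections) lets you pass from one to the other. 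More fundamentally, your strategy sources all the content from the \emph{ordinary} cone-reflectivity of $\ca_0$ in $\ck_0$, which can only ever produce surjections of hom-sets; the enriched conclusion ``lies in $\ce$'' has to come from an enriched input. (There is also a smaller gap: identifying $\cv_0(G,\sum_i\ca(A_i,A'))$ with $\sum_i\cv_0(G,\ca(A_i,A'))$ needs $G$ connected, which is not assumed here --- though this one is harmless since surjectivity of the composite through the smaller coproduct would suffice.)

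The paper's proof takes a different and essentially unavoidable route: it uses the virtual reflectivity of $\ca$ in $\ck$ (\cite[Corollary~4.22]{LT23}), i.e.\ that $\ck(K,J-)$ is a small weighted colimit of representables. Writing that colimit via coproducts, copowers and coequalizers, and using that regular epimorphisms lie in $\ce$ (as $\cm$ consists of monomorphisms) and that $\ce$ is closed under tensoring, one obtains a map $\sum_i X_i\cdot\ca(A_i,-)\to\ck(K,J-)$ already in $\ce$. Only then does the $\ce$-generator enter, in its actual covering sense: one precomposes with maps $G_i\to X_i$ in $\ce$, $G_i\in\cg$, to assume $X_i\in\cg$, and finally the closure of $\ca$ under $\cg$-powers absorbs the copowers into representables $\ca(X_i\pitchfork A_i,-)$. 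If you want to salvage your argument, you would need to replace the ordinary cone-reflectivity input by this enriched virtual reflectivity statement; the ordinary statement alone cannot yield the $\ce$-condition.
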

\begin{proof}
	By \cite[Corollary~4.22]{LT23} the $\cv$-category $\ca$ is virtually reflective in $\ck$. This means that for any $K\in\ck$ the $\cv$-functor $\ck(K,J-)\colon \ca\to\cv$ is a small colimit of representables. Therefore, since weighted colimits can be computed using coequalizers, coproducts, and copowers, and regular epimorphisms are in $\ce$, if follows that for any $K\in\ck$ there are families $\{X_i\}_{i\in I_K}\subseteq\cv$ and $\{A_i\}_{i\in I_K}\subseteq \ca$ together with a map
	\begin{center}
		
		\begin{tikzpicture}[baseline=(current  bounding  box.south), scale=2]
			
			\node (c) at (0,0) {$\sum_{i\in I_K} X_i\cdot\ca(A_i,-)$};
			\node (d) at (2,0) {$\ck(K,J-)$};

			\path[font=\scriptsize]
			
			(c) edge [->>] node [above] {$q_K$} (d);
		\end{tikzpicture}
	\end{center}
	that lies in $\ce$. Since $\cg$ is an $\ce$-generator and maps in $\ce$ are stable under coproducts and copowers, without loss of generality we can assume that $X_i\in\cg$ for any $i\in I_K$. 
	
	Note now that $q_K$ corresponds, under transposition and Yoneda, to a family of maps 
	$$ (q^i_K\colon K\to X_i\pitchfork JA_i)_{i\in I_K}. $$
	Moreover, since $J$ preserves $\cg$-powers, we have that $X_i\pitchfork JA_i\cong J(X_i\pitchfork A_i)$. Then, using again Yoneda, the family $q^i_K$ corresponds to a map $\tilde q_K$ as in the triangle below
	\begin{center}
		
		\begin{tikzpicture}[baseline=(current  bounding  box.south), scale=2]
			
			\node (b) at (0,0.5) {$\sum_{i\in I_K} X_i\cdot\ca(A_i,-)$};
			\node (c) at (0,-0.5) {$\sum_{i\in I_K} \ca(X_i\pitchfork A_i,-)$};
			\node (d) at (2.5,0) {$\ck(K,J-)$};

			\path[font=\scriptsize]
			
			(b) edge [->>] node [above] {$q_K$} (d)
			(b) edge [->] node [left] {$c$} (c)
			(c) edge [->>] node [below] {$\tilde q_K$} (d);
		\end{tikzpicture}
	\end{center}
	where $c$ is the comparison map. Since $q_K\in\ce$ then also $\tilde q_K\in\ce$ and the proof is complete.
\end{proof}

\begin{propo}\label{con-inj}
	Every accessible, accessibly embedded cone $\ce$-reflective subcategory in a locally presentable $\cv$-category $\ck$ is a $\ce$-cone-injectivity class.
\end{propo}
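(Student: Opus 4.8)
The strategy is to manufacture the set of cones $\ch$ out of the cone $\ce$-reflections of $\lambda$-presentable objects, and then verify the two inclusions $\ca\subseteq\Inj(\ch)$ and $\Inj(\ch)\subseteq\ca$, where $\Inj(\ch)$ denotes the full subcategory of $\ch$-injective objects of $\ck$.

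First I would fix $\lambda$ large enough that $\ck$ is locally $\lambda$-presentable, $\ca$ is $\lambda$-accessible and closed in $\ck$ under $\lambda$-filtered colimits, and $J$ preserves both $\lambda$-presentable objects and $\lambda$-filtered colimits (so in particular $\ca_\lambda=\ca\cap\ck_\lambda$). For each $\lambda$-presentable $P\in\ck$ choose a cone $\ce$-reflection $(q_i\colon P\to JA_i)_{i\in I_P}$. Since each $A_i$ is a $\lambda$-filtered colimit of $\lambda$-presentable objects of $\ca$ and $P$ is $\lambda$-presentable, $q_i$ factors as $P\xrightarrow{q_i'}JA_i'\to JA_i$ with $A_i'\in\ca_\lambda$; the refined cone $h_P:=(q_i'\colon P\to JA_i')_{i}$ then induces a morphism $\sum_i\ca(A_i',-)\to\ck(P,J-)$ through which the original (lying in $\ce$) one factors on the right, so by the cancellation property $gf\in\ce\Rightarrow g\in\ce$ — valid for the left class of an orthogonal factorization system with monic right class — it is again in $\ce$. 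Letting $P$ range over a skeleton of $\ck_\lambda$ produces a \emph{set} $\ch=\{h_P\}$ of cones between $\lambda$-presentable objects of $\ck$.

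For $\ca\subseteq\Inj(\ch)$: given $K\in\ca$ and $h_P\in\ch$, evaluate the morphism $\sum_i\ca(A_i',-)\to\ck(P,J-)$ of $\cv$-functors $\ca\to\cv$ at $K$; by full faithfulness of $J$ this is exactly the comparison map $\sum_i\ck(JA_i',K)\to\ck(P,K)$ induced by the cone, and since the former lies (pointwise) in $\ce$ so does the latter, i.e. $K$ is $h_P$-injective. For the reverse inclusion, take $K\in\Inj(\ch)$ and write $K\cong\colim K_c$ as the colimit of the canonical $\lambda$-filtered diagram $\ck_\lambda/K\to\ck$, with cocone $(u_c\colon K_c\to K)$. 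For each $c$ we have $h_{K_c}\in\ch$, and $h_{K_c}$-injectivity of $K$ says the comparison map $\sum_i\ck(JA^c_i,K)\to\ck(K_c,K)$ lies in $\ce$. Using this, together with the fact that $\ck(K_c,-)$ preserves the $\lambda$-filtered colimit $K\cong\colim K_{c'}$ and that $\cm$ is closed under $\lambda$-filtered colimits in $\cv^\to$ while $\ce={}^\perp\cm$ is closed under all colimits there, one argues that the full subdiagram of $\ck_\lambda/K$ on the arrows $K_c\to K$ that factor through an object of $\ca$ is cofinal; hence $K$ is a $\lambda$-filtered colimit of objects of $\ca$ and therefore lies in $\ca$, which is closed under $\lambda$-filtered colimits (and hence under the retracts implicit in the argument, idempotents splitting via $\lambda$-filtered colimits).

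The main obstacle is precisely this last cofinality step. In the $\Set$-based case the comparison map $\sum_j\ck(B_j,K)\to\ck(A,K)$ is literally surjective, so each $u_c$ factors through the reflection cone $h_{K_c}$ on the nose; in the enriched setting one only knows that this map lies in $\ce$, so the required factorizations must be extracted through the $(\ce,\cm)$-factorizations of the hom-objects using the closure properties of $(\ce,\cm)$ recorded in the Preliminaries. This is where the argument genuinely departs from the classical one, and it mirrors the corresponding step in the $\ce$-injectivity (regular) theorem of \cite{RTe2}.
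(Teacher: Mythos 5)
The first half of your argument --- refining the cone $\ce$-reflections of the $\lambda$-presentable objects so that their vertices land in $\ca_\lambda$, using the cancellation property of $\ce$ (valid because $\cm$ consists of monomorphisms), and the easy inclusion $\ca\subseteq\Inj_\ce(\ch)$ --- matches the paper's proof. The problem is the converse inclusion, and you have correctly located the difficulty but not resolved it. To show that $J\ca_\lambda/Y\hookrightarrow\ck_\lambda/Y$ is final you need an \emph{actual} factorization of each morphism $u\colon K_c\to Y$ through some $JA_i$; equivalently, you need the specific global element $u$ of $\ck(K_c,Y)$ to lift along the comparison map $\sum_i\ck(JA_i,Y)\to\ck(K_c,Y)$. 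Knowing only that this map lies in $\ce$ does not give you that: for $\cv=\Met$ with $\ce$ the dense maps, a map in $\ce$ need not be surjective on points, and the closure properties you invoke ($\cm$ closed under $\lambda$-filtered colimits in $\cv^\to$, $\ce$ closed under colimits there) concern the formation of new $(\ce,\cm)$-factorizations, not the lifting of a given element along a map in $\ce$. So the mechanism you propose for the cofinality step fails in exactly the enriched examples the paper is designed to cover, and your set of cones is too small to repair it.

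The paper closes this gap with an ingredient absent from your proposal: since $\ca$ is accessible and accessibly embedded in the locally presentable $\ck$, the underlying ordinary category $\ca_0$ is cone-reflective in $\ck_0$ in the classical sense, so each $K\in\ck_\lambda$ also admits a family $\{q_j\colon K\to JA_j\}_{j\in J_K}$ whose comparison map into $\ck(K,JA)$ is a genuine \emph{surjection} for $A\in\ca$. The cones actually placed in $\ch$ are the union cones indexed by $I_K\cup J_K$ (then refined into $\ca_\lambda$ as before); their comparison maps are simultaneously in $\ce$ --- so that $\ca\subseteq\Inj_\ce(\ch)$ still holds --- and surjective, and it is the surjectivity, not membership in $\ce$, that the paper uses to factor a given $K\to Y$ through some $JA_l$ on the nose and conclude finality. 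Without adjoining these ordinary reflection arrows, the converse inclusion cannot be completed along the lines you sketch.
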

\begin{proof}
	Let $\ca$ be a full subcategory of $\ck$ which is accessible, accessibly embedded, and cone $\ce$-reflective; denote by $J\colon\ca\hookrightarrow\ck$ the inclusion.
	
	Fix $\lambda$ such that $\ca$, $\ck$, and $J$ are $\lambda$-accessible and $J$ preserves $\lambda$-presentable objects. For any $K\in\ck_\lambda$ consider a family $\{q_i\colon K\to JA_i\}_{i\in I_K}$ expressing the cone $\ce$-reflectivity of $\ca$. Since $\ca_0$ is also accessible and accessibly embedded in $\ck_0$, it follows that it is cone reflective in the ordinary sense; thus for any $K\in\ck_\lambda$ we also have a family $\{q_j\colon K\to JA_j\}_{i\in J_K}$ expressing the cone-reflectivity of $\ca_0$.
	
	It follows that if we take the family $\{q_l\colon K\to JA_l\}_{l\in J_K\cup I_K}$ then for any $A\in\ca$ the map 
	$$ 
	q_K\colon \sum\limits_{l\in I_K\cup J_K} \ca(A_l,A)\longrightarrow\ck(K,A)
	$$
	is in $\ce$ and is a surjection, since we have the following commutative triangles
	\begin{center}
		\begin{tikzpicture}[baseline=(current  bounding  box.south), scale=2]
			
			\node (a) at (0,0.8) {$\sum_{i\in I_K} \ca(A_i,A)$};
			\node (b) at (2.2,0.8) {$\sum_{l\in I_K\cup J_K} \ca(A_l,A)$};
			\node (c) at (4.4,0.8) {$\sum_{j\in J_K} \ca(A_j,A)$};
			\node (d) at (2.2,0) {$\ck(K,JA)$};

			\path[font=\scriptsize]
			
			(a) edge [->] node [above] {} (b)
			(c) edge [->] node [left] {} (b)
			(a) edge [->] node [below] {$\ce$} (d)
			(b) edge [->] node [right] {$q_K$} (d)
			(c) edge [->] node [below] {$\tx{surj.}$} (d);
		\end{tikzpicture}
	\end{center}
	where the map on the left is in $\ce$ and the one on the right is a surjection.
	
	Without loss of generality we can assume that each $A_l$ lies in $\ca_\lambda$. Indeed, for any $l\in L_K:=I_K\cup J_K$ the map $q_l\colon K\to JA_l$ factors through some $q_l'\colon K\to JA_l'$, with $A_l'\in\ca_\lambda$ since this generates $\ca$ under $\lambda$-filtered colimits, $J$ preserves them, and $K$ is $\lambda$-presentable in $\ck$. Then we have a commutative diagram
	\begin{center}
		
		\begin{tikzpicture}[baseline=(current  bounding  box.south), scale=2]
			
			\node (b) at (0,0.45) {$\sum_{l\in L_K} \ca(A_l,-)$};
			\node (c) at (0,-0.45) {$\sum_{l\in L_K} \ca(A'_l,-)$};
			\node (d) at (2.3,0) {$\ck(K,J-)$};

			\path[font=\scriptsize]
			
			(b) edge [->>] node [above] {$q_K$} (d)
			(b) edge [->] node [left] {} (c)
			(c) edge [->>] node [below] {$q'_K$} (d);
		\end{tikzpicture}
	\end{center}
	where $q_X'$ is induced by the $q_l'$, and is in $\ce$ and a surjection since $q_X$ was.
	
	Therefore we assume $A_i\in\ca_\lambda$ for all $i\in I_K$. Now consider the set of cones 
	$$ \ch:=\{ (q_i\colon K\to JA_i)_{i\in I_K}\ | \ K\in\ck_\lambda \}; $$ 
	we shall prove that $\ca=\tx{Inj}_\ce(\ch)$.
	
	The inclusion $\ca\subseteq \tx{Inj}_\ce(\ch)$ is trivial. Consider then $Y\in\tx{Inj}_\ce(\ch)$; to conclude it is enough to prove that the inclusion of $J\ca_\lambda/Y$ into $\ck_\lambda/Y$ is final. Indeed, then $J\ca_\lambda/Y$ would be $\lambda$-filtered and would have colimit $Y$, showing that $Y\in\ca$.
	
	To prove that the inclusion is final it suffices to show that any morphism $K\to Y$, with $K\in\ck_\lambda$, factors through some $JA$ with $A\in\ca_\lambda$ (this is enough to imply finality since $\ck_\lambda/Y$ is $\lambda$-filtered). Now, by hypothesis on $Y$ we have that the induced map
	$$ \textstyle\sum_l \ck(JA_l,Y)\longrightarrow\ck(K,Y) $$ 
	is in particular a surjection; thus the desired factorization exists.
\end{proof}

\begin{coro}\label{char0}
Let $\cv_0$ have an $\ce$-generator $\cg$ made of $\ce$-stable $\cv$-connected objects, and assume that $\lambda$-elementary morphisms are $\lambda$-p.e.\ stable for $\LL$. Then every category $\Mod(\mathbb T)$
of models of a basic $\mathbb L_{\lambda\lambda}$-theory is a 
$(\mu,\ce)$-cone-injectivity class of $\Str(\mathbb L)$ for some $\mu\geq\lambda$.
\end{coro}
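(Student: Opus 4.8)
The plan is to chain together the results already established in this section with the closure properties of models of basic theories from Section~3. The strategy is to verify the hypotheses of Propositions~\ref{closure->accessible}, \ref{cone-refl}, and \ref{con-inj} in turn, applied to $\ca = \Mod(\mathbb T)$ viewed as a full subcategory of the locally presentable $\cv$-category $\ck = \Str(\mathbb L)$.

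First, by Proposition~\ref{stable1}, since $\mathbb T$ is a basic $\mathbb L_{\lambda\lambda}$-theory and $\lambda$-elementary morphisms are $\lambda$-p.e.\ stable, the subcategory $\Mod(\mathbb T)$ is closed in $\Str(\mathbb L)$ under $\lambda$-directed colimits, under $\lambda$-elementary (equivalently $(\lambda,\ce)$-pure) subobjects, and under powers by $\ce$-stable $\cv$-connected objects. By Proposition~\ref{closure->accessible}, the first two closure properties already imply that $\Mod(\mathbb T)$ is accessible; moreover the closure under $\lambda$-directed colimits shows that the inclusion into $\Str(\mathbb L)$ is accessibly embedded. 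So we may fix $\mu\geq\lambda$ with $\Mod(\mathbb T)$, $\Str(\mathbb L)$, and the inclusion $J$ all $\mu$-accessible, and with $J$ preserving $\mu$-presentable objects.

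Next, since $\cg$ is an $\ce$-generator of $\cv_0$ consisting of $\ce$-stable $\cv$-connected objects, and $\Mod(\mathbb T)$ is closed under powers by such objects (this is exactly the $\cg$-powers hypothesis), Proposition~\ref{cone-refl} applies and gives that $\Mod(\mathbb T)$ is cone $\ce$-reflective in $\Str(\mathbb L)$. Finally, Proposition~\ref{con-inj}, applied to the accessible, accessibly embedded, cone $\ce$-reflective subcategory $\Mod(\mathbb T)$ of the locally presentable $\cv$-category $\Str(\mathbb L)$, yields that $\Mod(\mathbb T)$ is a $(\mu,\ce)$-cone-injectivity class, as desired.

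The argument is essentially a bookkeeping composition of earlier results, so there is no single hard step; the only point requiring a little care is matching up the rank parameters. The theory is $\mathbb L_{\lambda\lambda}$, so the closure properties hold at level $\lambda$, but accessibility of $\Mod(\mathbb T)$, accessibility of the embedding, and preservation of presentable objects by $J$ may only be available at some larger regular cardinal $\mu$; one must then run Propositions~\ref{cone-refl} and \ref{con-inj} at that $\mu$, which is harmless since the $\cg$-powers closure and cone $\ce$-reflectivity are rank-independent. I would also remark that the $\ce$-generator $\cg$ in the statement lives in $\cv_0$ rather than necessarily in $\cv_\lambda$, which is exactly the form Proposition~\ref{cone-refl} requires, so no extra argument is needed there.
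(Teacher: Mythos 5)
Your proposal is correct and follows exactly the paper's own argument, which simply cites Propositions~\ref{stable1}, \ref{closure->accessible}, \ref{cone-refl} and \ref{con-inj} in that order; you have merely spelled out the bookkeeping (matching the $\cg$-powers hypothesis, passing from $\lambda$ to a suitable $\mu$) that the paper leaves implicit. The only point worth noting is that Proposition~\ref{stable1} gives closure under $\lambda$-elementary subobjects while Proposition~\ref{closure->accessible} asks for closure under $(\lambda,\ce)$-pure ones, so one needs that $(\lambda,\ce)$-pure morphisms are $\lambda$-elementary (as in Remark~\ref{coincide} and \cite[6.3]{RTe2}); the paper glosses over this in the same way you do.
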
 
\begin{proof}
Follows from \ref{closure->accessible}, \ref{con-inj}, \ref{cone-refl} and \ref{stable1}.
\end{proof}

We can collect all the results together in the following Theorem.

\begin{theo}\label{char}
Assume that $\cv_0$ has an $\ce$-generator consisting of $\ce$-stable $\cv$-connected objects, and that $\lambda_0$-elementary morphisms are $\lambda_0$-p.e.\ stable for $\LL$, for a given $\lambda_0$. Consider the following conditions for a full subcategory $\ca$ of $\Str(\mathbb L)$:
\begin{enumerate}
\item $\ca$ is isomorphic to $\Mod(\mathbb T)$ of models of a basic $\mathbb L$-theory $\mathbb T$;
\item $\ca$ is closed under $\lambda$-filtered colimits, powers by $\ce$-stable $\cv$-connected objects, and $(\lambda,\ce)$-pure subobjects, for some $\lambda$;
\item $\ca$ is accessible, accessibly embedded, and closed under powers by $\ce$-stable $\cv$-connected objects;
\item $\ca$ is a $(\lambda,\ce)$-cone-injectivity class of $\Str(\mathbb L)$, for some $\lambda$.
\end{enumerate}
Then $(1)\Rightarrow(2)\Rightarrow(3)\Rightarrow(4)$ always hold, and $(4)\Rightarrow(1)$ holds under Assumption~\ref{assumption}.
\end{theo}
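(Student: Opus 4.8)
The statement is proved by establishing the cycle $(1)\Rightarrow(2)\Rightarrow(3)\Rightarrow(4)$ and then closing it with $(4)\Rightarrow(1)$ under Assumption~\ref{assumption}; each implication is a direct application of a result proved above, the only genuine subtlety being the choice of the auxiliary cardinal. For $(1)\Rightarrow(2)$, given a basic $\mathbb L$-theory $\mathbb T$ I would first fix a regular cardinal $\lambda\geq\lambda_0$ large enough that $\mathbb T$ consists of sequents of $\mathbb L_{\lambda\lambda}$ and that $\lambda$-elementary morphisms are still $\lambda$-p.e.\ stable for $\mathbb L$ (in all our examples this holds for every sufficiently large $\lambda$ by Lemma~\ref{pos-ex}, Lemma~\ref{pos-ex-additive}, or Lemma~\ref{lambda-stable-categorically}). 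Then Proposition~\ref{stable1} gives that $\ca=\Mod(\mathbb T)$ is closed under $\lambda$-filtered colimits, under powers by $\ce$-stable $\cv$-connected objects, and under $\lambda$-elementary subobjects; since every $(\lambda,\ce)$-pure morphism of $\mathbb L$-structures is $\lambda$-elementary (this is the direction of \cite[6.6]{RTe2} that does not require Assumption~\ref{assumption}; cf.\ Remark~\ref{coincide}), closure under $\lambda$-elementary subobjects forces closure under $(\lambda,\ce)$-pure subobjects, which is exactly~(2).

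For $(2)\Rightarrow(3)$, recall that $\Str(\mathbb L)$ is a locally presentable $\cv$-category \cite{RTe2}; then Proposition~\ref{closure->accessible} shows that a subcategory closed under $\lambda$-filtered colimits and $(\lambda,\ce)$-pure subobjects is accessible, the inclusion $\ca\hookrightarrow\Str(\mathbb L)$ preserves $\lambda$-filtered colimits (so, after possibly enlarging $\lambda$ to make $\ca$ itself $\lambda$-accessible, $\ca$ is accessibly embedded), and the closure under powers by $\ce$-stable $\cv$-connected objects is carried over verbatim from~(2). For $(3)\Rightarrow(4)$, note that the $\ce$-generator $\cg$ of $\cv_0$ fixed in the hypothesis consists of $\ce$-stable $\cv$-connected objects, so $\ca$ is in particular closed under $\cg$-powers; Proposition~\ref{cone-refl} then makes $\ca$ cone $\ce$-reflective in $\Str(\mathbb L)$, and Proposition~\ref{con-inj} upgrades an accessible, accessibly embedded, cone $\ce$-reflective subcategory to a $(\lambda',\ce)$-cone-injectivity class for a suitable $\lambda'\geq\lambda$ (the composite $(1)\Rightarrow(4)$ is already recorded in Corollary~\ref{char0}).

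For $(4)\Rightarrow(1)$ under Assumption~\ref{assumption}, one invokes the implication $(1)\Rightarrow(2)$ of Proposition~\ref{mod<->inj}, which is valid under that assumption: it exhibits any $(\lambda,\ce)$-cone-injectivity class as $\Mod(\mathbb T)$ for an $\mathbb L_{\lambda\lambda}$-theory $\mathbb T$ whose axioms are the sequents $(\forall x)(\varphi(x)\vdash\bigvee_{j\in J}(\exists y_j)(\psi_j(x,y_j)\wedge\varphi(x)))$ with $\varphi,\psi_j$ conjunctions of atomic formulas. Such sequents are sequents of positive-existential formulas, so $\mathbb T$ is a basic theory and~(1) holds.

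The step I expect to need the most care is the cardinal bookkeeping inside $(1)\Rightarrow(2)$ (and the subsequent enlargements of $\lambda$): the hypothesis only guarantees $\lambda_0$-p.e.\ stability at the fixed $\lambda_0$, whereas to handle an arbitrary basic theory one needs $\lambda$-p.e.\ stability at a cardinal $\lambda$ large enough to express that theory. This is precisely what the phrase ``for some $\lambda$'' in conditions (2)--(4) accommodates, and in all the examples of interest $\lambda$-p.e.\ stability is available uniformly in $\lambda$ via Lemmas~\ref{pos-ex}, \ref{pos-ex-additive} and \ref{lambda-stable-categorically}. Beyond that, the argument is a straightforward chaining of the cited propositions.
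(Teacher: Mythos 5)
Your proof is correct and follows essentially the same route as the paper: $(1)\Rightarrow(2)$ via Proposition~\ref{stable1}, $(2)\Rightarrow(3)$ via Proposition~\ref{closure->accessible}, $(3)\Rightarrow(4)$ via Propositions~\ref{cone-refl} and~\ref{con-inj}, and $(4)\Rightarrow(1)$ via Proposition~\ref{mod<->inj}. Your extra attention to the cardinal bookkeeping (passing from the fixed $\lambda_0$ of the hypothesis to a $\lambda$ large enough to express $\mathbb T$) and to the comparison between $(\lambda,\ce)$-pure and $\lambda$-elementary morphisms addresses points the paper's proof leaves implicit, but does not change the argument.
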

\begin{proof}
The implication (1)$\Rightarrow$(2) is a consequence of Proposition~\ref{stable1}, (2)$\Rightarrow$(3) is given by Proposition~\ref{closure->accessible}, (3)$\Rightarrow$(4) by Propositions~\ref{cone-refl} and~\ref{con-inj} together, and finally (4)$\Rightarrow$(1) is given by Proposition~\ref{mod<->inj}.
\end{proof}

Note that the equivalence (3)$\Leftrightarrow$(2) for a fixed $\lambda$ is not valid even in the ordinary context, as shown in~\cite[Example~3.1]{RAB02}. 
The base $\Met$ satisfies the assumption of \ref{char} for $\ce$ = dense, but we will show that the implication (3)$\Rightarrow$(2), for a fixed $\lambda$, is not valid.

\begin{defi}
We say that a morphism $f:K\to L$ in a $\cv$-category $\ck$ is a
\textit{$(\lambda,\ce)$-pure quotient} if $\ck(A,K)\to\ck(A,L)$ is in $\ce$
for every $\lambda$-presentable object $A$ in $\ck$.

A class $\cl\subseteq\ck$ is \textit{closed under $(\lambda,\ce)$-pure quotients} if, for a $(\lambda,\ce)$-pure quotient $f\colon K\to L$, then $L\in\cl$ provided that $K\in\cl$. 
\end{defi}

If $\ce$ is made from epimorphisms then $(\lambda,\ce)$-pure quotients
are $\lambda$-pure quotients in the sense of \cite{AR2}.

\begin{lemma}\label{quot}
Every $(\lambda,\ce)$-cone-injectivity class of $\mathbb L$-structures
is closed under $(\lambda,\ce)$-pure quotients.
\end{lemma}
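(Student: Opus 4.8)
The plan is to unwind the definition of $(\lambda,\ce)$-cone-injectivity and exploit the fact that representables $\Str(\LL)(A,-)$, for $A$ a $\lambda$-presentable $\LL$-structure, send $(\lambda,\ce)$-pure quotients to maps in $\ce$ by definition, then argue that this property propagates through coproducts and the orthogonal factorization system. So let $\ca$ be a $(\lambda,\ce)$-cone-injectivity class determined by a set $\ch$ of cones between $\lambda$-presentable $\LL$-structures, let $f\colon K\to L$ be a $(\lambda,\ce)$-pure quotient, and assume $K\in\ca$; I want to show $L\in\ca$, i.e. that $L$ is $h$-injective for every $h=(h_j\colon A\to B_j)_{j\in J}$ in $\ch$.

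First I would fix such a cone $h$ and form the commutative square whose horizontal maps are induced by $\Str(\LL)(-,f)$: the top row is $\sum_{j}\Str(\LL)(B_j,K)\to\sum_j\Str(\LL)(B_j,L)$ and the bottom row is $\Str(\LL)(A,K)\to\Str(\LL)(A,L)$, with the vertical maps being the canonical maps induced by the $h_j$. Since $A$ and each $B_j$ are $\lambda$-presentable and $f$ is a $(\lambda,\ce)$-pure quotient, both horizontal maps lie in $\ce$; since $\ce$ is stable under coproducts, the top one is in $\ce$ (this is where coproduct-stability of $\ce$, which holds because $\ce$ is the left class of a factorization system, is used). By hypothesis $K\in\ca$ is $h$-injective, so the left vertical map is in $\ce$. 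The composite from $\sum_j\Str(\LL)(B_j,K)$ down-then-right equals across-then-down, and the across map (top) is in $\ce$, the down map on the left is in $\ce$; hence the composite is in $\ce$ (left classes of factorization systems are closed under composition). Therefore the composite right-then-down, namely $\sum_j\Str(\LL)(B_j,L)\to\Str(\LL)(A,L)$ precomposed with nothing — wait, I need the right vertical map to be in $\ce$, and I have that its composite with an $\ce$-map on the left (along the top) is in $\ce$.

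The key step is then the cancellation: I have $e\in\ce$ (the top row) and $m\circ e\in\ce$ where $m$ is the right vertical map $\sum_j\Str(\LL)(B_j,L)\to\Str(\LL)(A,L)$, wait — more precisely the square gives $m\circ e_{\mathrm{top}} = \Str(\LL)(A,f)\circ e_{\mathrm{left}}$, and the right-hand side is a composite of two $\ce$-maps hence in $\ce$; so $m\circ e_{\mathrm{top}}\in\ce$ with $e_{\mathrm{top}}\in\ce$. Now I invoke the standard fact that if $g\circ e\in\ce$ and $e\in\ce$ then $g\in\ce$: factor $g=m'\circ e'$ with $e'\in\ce$, $m'\in\cm$; then $g\circ e=m'\circ(e'\circ e)$ is an $(\ce,\cm)$-factorization of a map in $\ce$, so $m'$ is an isomorphism, hence $g=m'e'\in\ce$. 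Applying this with $g=m$ gives $m\in\ce$, i.e. $\sum_j\Str(\LL)(B_j,L)\to\Str(\LL)(A,L)$ is in $\ce$, which is exactly $h$-injectivity of $L$. Since $h\in\ch$ was arbitrary, $L\in\ca$.

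I expect the only subtle point is making sure the right class $\cm$ plays no role and that coproduct-stability of $\ce$ is legitimately available — both follow from $(\ce,\cm)$ being an (enriched) orthogonal factorization system as fixed in the preliminaries — and, if one wants to be careful, checking that the square above genuinely commutes, which is just functoriality of $\Str(\LL)(-,-)$ in both variables together with naturality of the coproduct injections. No deep input is needed; the argument is a diagram chase plus the cancellation lemma for $\ce$.
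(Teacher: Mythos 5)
Your proof is correct and is essentially identical to the paper's: both form the same commutative square of hom-objects, observe that three of its four maps lie in $\ce$ (using $\lambda$-presentability of $A$ and the $B_j$, closure of $\ce$ under coproducts, and $h$-injectivity of $K$), and conclude by the left-cancellation property of $\ce$. The paper merely states the cancellation step without spelling out the factorization argument you give.
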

\begin{proof}
Let $h=(h_j\colon A\to B_j)_{j\in J}$ be a cone between $\lambda$-presentable $\mathbb L$-structures, $K$ be $h$-injective, and $f:K\to L$ be a $(\lambda,\ce)$-pure quotient. Consider the commutative diagram
\begin{center}
			\begin{tikzpicture}[baseline=(current  bounding  box.south), scale=2]
				
				\node (0) at (0.5,0.8) {$\sum\ck(B_j,K)$};
				\node (a0) at (2.2,0.8) {$\ck(A,K) $};
				\node (b0) at (2.2,0) {$\ck(A,L) $};
				\node (d0) at (0.5,0) {$\sum\ck(B_j,L)$};
				
				\path[font=\scriptsize]
				
				(0) edge [->>] node [above] {} (a0)
				(a0) edge [->>] node [above] {} (b0)
				(0) edge [->>] node [left] {} (d0)
				(b0) edge [<-] node [below] {} (d0);
			\end{tikzpicture}	
		\end{center}
where the horizontal maps are induced by $h$ and the vertical maps 
by $f$.	Since all maps, except the lower horizontal one, are in $\ce$,
the lower horizontal map is in $\ce$ as well. Hence $L$ is 
$h$-injective.	
\end{proof}

\begin{exam}
Let $\cv=\Met$ and $\ce=$ dense. Consider the language $\mathbb L$
consisting of unary relation symbols $R_n$, $n=1,2,\dots,n,\dots$.
Let $1$ be a one-point metric space and $A_n$ a one-point metric space
where $(R_n)_{A_n}=A_n$ and $(R_n)_{A_m}=\emptyset$ for $n\neq m$.
Let $h_n:1\to A_n$, $n=1,2,\dots,n,\dots$ be a cone $h$. Let $\cl$
consist of objects injective to $h$ in an unenriched sense. This means
that $A\in\cl$ if and only if every $a\in A$ is in some relation $R_n$. 
Then $\cl$ is closed in $\Str(\mathbb L)$ under directed colimits and submodels. Since the factorization system (dense, closed isometry) 
satisfies \ref{assumption}, $(\lambda,\ce)$-pure subobjects are
$\lambda$-elementary (\cite[6.3]{RTe2}), hence submodels. Thus $\cl$
is closed under $(\lambda,\ce)$-pure subobjects. 

Finally, $\ce$-stable objects are precisely discrete objects. Indeed, for a metric space $X$, we take metric spaces $X_n$,
$n=1,2,\dots,n,\dots$, given by adding to $X$ new points $x_n$ with $d(x,x_n)=\frac{1}{n}$. The distances $d(x_n,y_n)$ are given by the triangle inequalities as $d(x_n,y_n)=d(x,y)+\frac{2}{n}$. Let
$X_\ast=\bigcup\limits_n X_n$. Then $e:X_\ast\to X_\ast\cup X$ is dense but
$e^X$ is dense only for $X$ discrete. Hence $1$ is the only $\ce$-stable
connected object and thus $\cl$ is closed under powers by $\ce$-stable
connected objects. Consequently, $\cl$ satisfies the properties from
\ref{stable1}.

However, $\cl$ is not closed under $(\lambda,\ce)$-pure quotients. It
suffices to take the metric space $A=\{0,1,\frac{1}{2},\dots,\frac{1}{n},\dots\}$ where
$(R_n)_A=\{\frac{1}{n}\}$. Then the inclusion $A\setminus\{0\}\to A$
is a $(\lambda,\ce)$-pure quotient, $A\setminus\{0\}\in\cl$ but $A\notin\cl$. Following \ref{quot}, $\cl$ is not an enriched cone
injectivity class.
\end{exam}

\begin{rem}
This example shows that (3) in \ref{char} does not imply that $\ca$ is closed under $(\lambda,\ce)$-pure quotients. Moreover, example 
\cite[3.1]{RAB02} is closed under $(\omega,\ce)$-pure quotients
for $\ce=$ surjections. Hence, even by adding to be closed under
$(\lambda,\ce)$-pure quotients to (3) of \ref{char}, we do not obtain
a characterization of $\lambda$-cone-injectivity classes for a fixed
$\lambda$.
\end{rem}

\section{Limit theories}\label{limit-section}

Here we introduce limit theories and their models; the objective is to characterize those full subcategories of $\Str(\LL)$ that arise as orthogonality classes. As we shall see below limit theories will not generally fall into the fragment of regular logic, so they require an ad-hoc definition; this is because ``being in $\cm$'' may not be expressible by a regular sentence.
 At the end of the section we treat the case where they can indeed be captured in the regular fragment

\begin{defi}\label{limit}
	Given conjunctions of atomic formulas $\varphi(x)$ and $\psi(x,y)$, we say that an $\mathbb L$-structure $A$ satisfies the sequent
	$$(\forall x)\left(  \varphi(x) \vdash (\exists!! y) (\psi(x,y)\wedge \varphi(x))\right),$$ and write
	$ A\models (\varphi\vdash \exists!!y(\psi\wedge\varphi))$, if we have:\begin{itemize}
		\item(existence) $A\models (\varphi\vdash \exists y(\psi\wedge \varphi))$;
		\item($\cm$-uniqueness) The diagonal map
		\begin{center}
			\begin{tikzpicture}[baseline=(current  bounding  box.south), scale=2]
				
				\node (0) at (0.5,0.8) {$(\psi\wedge\varphi)_A$};
				\node (b0) at (2.2,0) {$A^X$};
				\node (d0) at (0.5,0) {$A^{X+Y}$};
				
				\path[font=\scriptsize]
				
				(0) edge [->] node [above] {} (b0)
				(0) edge [>->] node [left] {} (d0)
				(b0) edge [<-] node [below] {$A^{i_X}$} (d0);
			\end{tikzpicture}	
		\end{center}
		lies in $\cm$.
	\end{itemize}
	
\end{defi}

We define limit theories as follows:

\begin{defi}
	A {\em limit theory} $\mathbb T$ is a set of sequents of the form
	$$(\forall x)(  \varphi(x) \vdash (\exists!!y)(\psi(x,y)\wedge\varphi(x))$$
	where $\varphi$ and $\psi$ are conjunctions of atomic formulas.
\end{defi}

\begin{rem}
	By taking $\varphi:= \top$ in the formula above, we are allowed to consider axioms of the form 
	$$ (\forall x)(\exists!! y) \psi(x,y). $$
	An $\LL$-structure satisfies this if and only if the composite 
	$$ \psi_A\rightarrowtail A^{X+Y}\xrightarrow{\pi_1} A^X $$
	is an isomorphism. 
\end{rem}

We can reinterpret the definition in a more diagrammatic sense as follows:

\begin{lemma}\label{pullback-validity}
	An $\LL$-structure $A$ is such that $ A\models (\varphi\vdash \exists!!y(\psi\wedge \varphi))$ if and only if there exists an arrow $t\colon\varphi_A\to \psi_A$ making the diagram
	\begin{center}
		\begin{tikzpicture}[baseline=(current  bounding  box.south), scale=2]
			
			\node (0) at (0,0.8) {$\varphi_A$};
			\node (a0) at (2,0.8) {$\varphi_A$};
			\node (b0) at (2,0) {$A^X$};
			\node (c0) at (0,0) {$\psi_A$};
			\node (d0) at (1,0) {$A^{X+Y}$};
			
			\path[font=\scriptsize]
			
			(0) edge [->] node [above] {$1$} (a0)
			(a0) edge [>->] node [above] {} (b0)
			(0) edge [dashed,->] node [left] {$t$} (c0)
			(b0) edge [<-] node [below] {$A^{i_X}$} (d0)
			(c0) edge [>->] node [below] {} (d0);
		\end{tikzpicture}	
	\end{center}
	a pullback, where $i_X\colon X\to X+Y$ is the first component inclusion.
\end{lemma}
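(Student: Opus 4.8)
The plan is to reduce both sides of the claimed equivalence to the single statement that a canonical comparison morphism is an isomorphism. Write $\iota_\varphi\colon\varphi_A\rightarrowtail A^X$ and $\iota_\psi\colon\psi_A\rightarrowtail A^{X+Y}$ for the interpretation subobjects, and let $d:=A^{i_X}\circ\iota_\psi\colon\psi_A\to A^X$ be the diagonal map featuring in Definition~\ref{limit}. The key preliminary observation I would record is that $(\psi\wedge\varphi)_A$ is, canonically, a pullback: by construction it is the intersection inside $A^{X+Y}$ of $\psi_A$ with the reindexing $(A^{i_X})^{*}\varphi_A$ of $\varphi_A$ along $A^{i_X}$, and pasting this pullback with the pullback defining $(A^{i_X})^{*}\varphi_A$ identifies $(\psi\wedge\varphi)_A$ with $\psi_A\times_{A^X}\varphi_A$, the pullback of the cospan $\varphi_A\xrightarrow{\iota_\varphi}A^X\xleftarrow{d}\psi_A$. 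Denote its projections $p\colon(\psi\wedge\varphi)_A\to\psi_A$ and $q\colon(\psi\wedge\varphi)_A\to\varphi_A$; then the composite $\delta:=\iota_\varphi\circ q=d\circ p$ is exactly the diagonal map $(\psi\wedge\varphi)_A\rightarrowtail A^{X+Y}\xrightarrow{A^{i_X}}A^X$ appearing in Definition~\ref{limit}.

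Next I would translate the two clauses of Definition~\ref{limit} into conditions on $q$. Since $(\exists y(\psi\wedge\varphi))_A$ is the $\cm$-part of the $(\ce,\cm)$-factorization of $\delta=\iota_\varphi\circ q$ and $\iota_\varphi\in\cm$, factoring $q$ as $q=m'\circ e'$ exhibits $(\iota_\varphi m')\circ e'$ as an $(\ce,\cm)$-factorization of $\delta$; hence $(\exists y(\psi\wedge\varphi))_A$ always sits inside $\varphi_A$, with equality precisely when $q\in\ce$, so the existence clause $A\models(\varphi\vdash\exists y(\psi\wedge\varphi))$ holds iff $q\in\ce$. For $\cm$-uniqueness, the diagonal is $\delta=\iota_\varphi\circ q$ with $\iota_\varphi\in\cm$, so by the cancellation property of $\cm$ in a factorization system (if $g\circ f\in\cm$ and $g\in\cm$, then $f\in\cm$) one has $\delta\in\cm$ iff $q\in\cm$. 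Combining the two, $A\models(\varphi\vdash\exists!!y(\psi\wedge\varphi))$ iff $q$ lies in $\ce\cap\cm$, i.e. iff $q$ is an isomorphism.

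Finally I would match this with the pullback clause of the statement. The square there is a pullback for a given $t$ precisely when the induced comparison $c\colon\varphi_A\to\psi_A\times_{A^X}\varphi_A=(\psi\wedge\varphi)_A$, characterised by $q\circ c=1_{\varphi_A}$ and $p\circ c=t$, is an isomorphism; since $q\circ c=1$ forces $q=c^{-1}$, such a $t$ exists iff $q$ is an isomorphism, in which case one takes $t:=p\circ q^{-1}$ (and $c:=q^{-1}$), and conversely a $t$ making the square a pullback exhibits $q$ as the inverse of the comparison. Together with the previous paragraph this proves the lemma. The argument is mostly bookkeeping; the only points needing care are the pullback-pasting identification of $(\psi\wedge\varphi)_A$ with $\psi_A\times_{A^X}\varphi_A$ (ensuring the reindexing of $\varphi$ along $i_X$ is the intended one) and the cancellation property for $\cm$, both of which are routine from the axioms of an orthogonal factorization system whose right class consists of monomorphisms.
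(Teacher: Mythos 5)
Your proof is correct and follows essentially the same route as the paper's: both identify $(\psi\wedge\varphi)_A$ with the pullback of the cospan $\varphi_A\rightarrowtail A^X\leftarrow\psi_A$ and reduce the two clauses of Definition~\ref{limit} to the projection $(\psi\wedge\varphi)_A\to\varphi_A$ lying in $\ce$ and in $\cm$ respectively, hence being invertible, which is exactly the pullback condition. Your write-up just makes explicit the pullback-pasting identification and the $\cm$-cancellation step that the paper leaves implicit.
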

\begin{proof}
	Consider the following diagram
	\begin{center}
		\begin{tikzpicture}[baseline=(current  bounding  box.south), scale=2]
			
			\node (0) at (0,1) {$(\psi\wedge\varphi)_A$};
			\node (a0) at (3,1) {$\varphi_A$};
			\node (b0) at (3,0) {$A^X$};
			\node (c0) at (0,0) {$\psi_A$};
			\node (d0) at (1.5,0) {$A^{X+Y}$};
			\node (e0) at (1.5,0.5) {$(\exists y(\psi\wedge\varphi))_A$};
			
			\path[font=\scriptsize]
			
			(0) edge [->] node [above] {$s$} (a0)
			(a0) edge [>->] node [above] {} (b0)
			(0) edge [->] node [left] {$t$} (c0)
			(b0) edge [<-] node [below] {$A^{i_X}$} (d0)
			(c0) edge [>->] node [below] {} (d0)
			(0) edge [->>] node [below] {$q$} (e0)
			(e0) edge [>->] node [above] {} (b0)
			(e0) edge [>->] node [below] {$m$} (a0);
		\end{tikzpicture}	
	\end{center}
	whose delimiting square is a pullback by definition, and where we noted that 
$$
(\exists y(\psi\wedge \varphi))_A\subseteq \varphi_A
$$ 
as $\cm$-subobjects of $A^X$.
	
	Now, if $ A\models (\varphi\vdash \exists!!\psi)$ then we also have $\varphi_A\subseteq (\exists y(\psi\wedge \varphi))_A$ so that $m$ is an isomorphism. Moreover, the diagonal of the square is in $\cm$ by definition, thus $q$ is an isomorphism too (being in $\ce$), and thus so is $s\colon (\psi\wedge\varphi)_A\to \varphi_A$. Therefore $(\psi\wedge\varphi)_A\cong\varphi_A$ and the statement follows.
	
	Conversely, if $\varphi_A$ is a pullback as in the statement then the map $s$ in the diagram above is an isomorphism, and so is $m$. If follows that the diagonal of the square is in $\cm$ and that $\varphi_A\subseteq (\exists y(\psi\wedge \varphi))_A$; thus $A\models (\varphi\vdash \exists!!y\psi)$.
\end{proof}

The result above shows in particular that satisfaction of a limit sentence does {\em not} depend on the chosen factorization system on $\cv$; for this reason in the following example it's not really necessary to specify it.

\begin{exam}
	Consider $\cv=\Met$. A metric space $M$ is said to have the {\em unique midpoint property} if for any $x,y\in M$ there exists a unique $z\in M$ with $d(x,z)=d(y,z)=d(x,y)/2$. This can be expressed by a limit theory $\TT$, over the empty language, with axioms
	$$ (\forall (x,y):2_d)(\exists!! \bar z:3_{d/2})\  (\pi_1(\bar z)=x )\wedge (\pi_3(\bar z)=y). $$
	for any $d\in\mathbb Q^+$, where $2_d$ has two elements of distance $d$, while $3_{d/2}$ has three points $(x,z,y)$ with $d(x,z)=d(y,z)=d/2$ and $d(x,y)=d$. 
\end{exam}

\begin{exam}
	For any $\cv$, we describe a single-sorted limit theory for categories internal to $\cv$. Consider the language $\LL$ with:\begin{itemize}
		\item two $(I,I)$-ary function symbols $\tx{s},\tx{t}$ --- to think of as the source and the target;
		\item a $I$-ary relation symbol $\tx{Obj}$;
		\item a $(I+I+I)$-ary relation symbol $\tx{Comp}$.
	\end{itemize}
	The idea is that the underlying object of an $\LL$-structure $A$ is the object of morphisms of an internal category, $\tx{Obj}_A$ is the object of objects (seen as a the subobject of $A$ given by the identity morphisms), $s_A,t_A$ are the source and target maps, and $\tx{Comp}_A$ is the object of triples $(f,g,h)$ where $f$ and $g$ are composable and $g\circ f=h$.\\
	With this in mind, we define the limit theory $\TT$ to consist of the following axioms:\begin{enumerate}
		\item $ (\forall x)\ \tx{Obj}(s(x))\wedge \tx{Obj}(t(x)); $
		\item $ (\forall x,y,z)\ \tx{Comp}(x,y,z)\vdash t(x)=s(y); $
		\item $ (\forall x,y)\ t(x)=s(y) \vdash (\exists!! z) \tx{Comp}(x,y,z); $
		\item $ (\forall x)\ \tx{Comp}(s(x),x,x)\wedge \tx{Comp}(x,t(x),x); $
		\item $ (\forall x,y,z,a,b,c)\ \tx{Comp}(x,y,a)\wedge\tx{Comp}(y,z,b)\wedge\tx{Comp}(a,z,c) \vdash \tx{Comp}(x,b,c). $
	\end{enumerate}	
	Given an $\LL$-structure $A$, axiom (1) is saying that $s_A,t_A\colon A\to A$ factor through the inclusion $\tx{Obj}_A\rightarrowtail A$ to give maps $s'_A,t'_A\colon A\to \tx{Obj}_A$. Then, axiom (2) and (3) together show that $\tx{Comp}_A$ is isomorphic to the pullback of $s'_A$ along $t'_A$. Axioms (4) and (5) express the identity and associativity rules respectively. It is now easy to conclude that $\Mod(\TT)=\Cat(\cv)$ is the $\cv$-category of categories internal to $\cv$.
\end{exam}

\begin{exam}
	Consider $\cv=\Cat$ and take the theory $\TT$ for categories with limits of shape $C\in\Cat$ as in \cite[Example~4.18]{RTe2}. The theory $\TT$ can be modified into a limit theory $\TT'$ by replacing $\psi(x,z)$ with
	$$
	\psi'(x,z):=  (\exists!! w:2*C)\ ( j_0(w)=x \wedge j_1(w)=z)
	$$
	and the axiom $\beta$ with 
	$$ \beta':= (\forall y:C) (\exists!! x:0*C)\ R(x)\wedge (k(x)=y). $$
	A model of $\TT'$ is a category with unique representatives of $C$-limits (that is, any diagram of shape $C$ has a unique limiting cone).
\end{exam}

We now proceed with the theory. Recall that, given a morphism $h:A\to B$ in a $\cv$-category $\ck$, an object $K$ is called $h$-orthogonal if the map
$$ \ck(h,K):\ck(B,K)\to\ck(A,K) $$
is an isomorphism (\cite{K}). Clearly, if $K$ is $h$-orthogonal, it is $h$-injective. 

\begin{defi}
	Given a morphism $h:A\to B$ between $\lambda$-presentable $\mathbb L$-structure $A$ and $B$, we say that an $X$-ary sequent $$\tau_h(x):= (\forall x)(  \varphi(x) \vdash (\exists!!y)(\psi(x,y)\wedge\varphi(x))),$$ with $\varphi$ and $\psi$ conjunctions of atomic formulas in $\mathbb L_{\lambda\lambda}$, is an \textit{orthogonality sequent} of $h$ if an $\mathbb L$-structure $K$ is $h$-orthogonal if and only if $K\models\tau_h$. 
\end{defi}

\begin{propo}\label{ort}
	Under Assumption~\ref{assumption}, every morphism between $\lambda$-presentable $\mathbb L$-structures has an orthogonality sequent.
\end{propo}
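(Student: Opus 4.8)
The plan is to extract an orthogonality sequent for $h\colon A\to B$ directly from presentation formulas for $A$ and for $B$, the latter presented relative to $h$. Applying \cite[Corollary~5.4]{RTe2} and the one-morphism case of the construction in \cite[5.11]{RTe2} (i.e. the cone-version applied to $\{h\}$), I would obtain: a conjunction of atomic $\mathbb L_{\lambda\lambda}$-formulas $\varphi(x)$, of arity $X\in\cv_\lambda$, that is a presentation formula for $A$; an arity $Y\in\cv_\lambda$; and a conjunction of atomic $\mathbb L_{\lambda\lambda}$-formulas $\psi(x,y)$, of arity $X+Y$, such that $\psi(x,y)\wedge\varphi(x)$ (with $\varphi$ reindexed to be $X+Y$-ary) is a presentation formula for $B$ and such that $h$ is, up to isomorphism over $A$, the morphism $A\to B$ induced by the coproduct inclusion $i_X\colon X\to X+Y$. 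The candidate orthogonality sequent is then
$$
\tau_h(x):=(\forall x)\bigl(\varphi(x)\vdash(\exists!!y)(\psi(x,y)\wedge\varphi(x))\bigr),
$$
which has exactly the shape required in the definition, since $\varphi$ and $\psi$ are conjunctions of atomic $\mathbb L_{\lambda\lambda}$-formulas.

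Next I would identify $\Str(\mathbb L)(h,K)$ with a concrete map. By the defining property of presentation formulas there are $\cv$-natural isomorphisms $\Str(\mathbb L)(A,K)\cong\varphi_K$ and $\Str(\mathbb L)(B,K)\cong(\psi\wedge\varphi)_K$, and since $h$ is induced by $i_X$, naturality forces $\Str(\mathbb L)(h,K)$ to become, under these isomorphisms, the map $g_K\colon(\psi\wedge\varphi)_K\to\varphi_K$ obtained by corestricting $(\psi\wedge\varphi)_K\rightarrowtail K^{X+Y}\xrightarrow{K^{i_X}}K^X$ along $\varphi_K\rightarrowtail K^X$; the corestriction exists because $\psi\wedge\varphi\vdash\varphi$, hence $(\exists y)(\psi\wedge\varphi)\vdash\varphi$. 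Thus $K$ is $h$-orthogonal if and only if $g_K$ is an isomorphism.

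It then remains to show $g_K$ is an isomorphism if and only if $K\models\tau_h$, and this I would read off from the $(\ce,\cm)$-factorization $(\psi\wedge\varphi)_K\twoheadrightarrow(\exists y(\psi\wedge\varphi))_K\rightarrowtail K^X$ through which $g_K$ factors, followed by the $\cm$-subobject inclusion $(\exists y(\psi\wedge\varphi))_K\rightarrowtail\varphi_K$. The existence clause of $\tau_h$ says precisely that this last inclusion is an isomorphism (using that $(\exists y(\psi\wedge\varphi))_K\subseteq\varphi_K$ always), while the $\cm$-uniqueness clause says exactly that the composite $(\psi\wedge\varphi)_K\rightarrowtail K^{X+Y}\xrightarrow{K^{i_X}}K^X$ lies in $\cm$, equivalently, by uniqueness of $(\ce,\cm)$-factorizations, that the $\ce$-part $(\psi\wedge\varphi)_K\twoheadrightarrow(\exists y(\psi\wedge\varphi))_K$ is invertible. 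Hence $K\models\tau_h$ iff both maps composing $g_K$ are isomorphisms, iff $g_K$ is an isomorphism; alternatively this is just Lemma~\ref{pullback-validity}, whose pullback has corner $(\psi\wedge\varphi)_K$ and projection $g_K$ onto $\varphi_K$. Combining the three steps gives $K$ is $h$-orthogonal iff $K\models\tau_h$, so $\tau_h$ is an orthogonality sequent.

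The only genuinely substantial point is the first step: extracting from \cite{RTe2} that any $\lambda$-presentable $B$ receiving a morphism from a $\lambda$-presentable $A$ admits a presentation by $\varphi(x)\wedge\psi(x,y)$ compatible with that morphism (new generators $Y$, new relations $\psi$), with all arities in $\cv_\lambda$. This is where Assumption~\ref{assumption} is used, ensuring that $\lambda$-presentable $\mathbb L$-structures and their presentation formulas behave as in the relational, resp. well-arited, cases of \cite[5.11]{RTe2}; everything after it is bookkeeping with the orthogonal factorization system.
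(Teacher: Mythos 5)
Your proposal is correct and follows essentially the same route as the paper: the paper realizes your ``relative presentation'' of $B$ over $A$ concretely as $\chi'(x,y)=\pi^A(x)\wedge\pi^B(y)\wedge(\tau(y)=\eps(x))$ from \cite[5.11]{RTe2} (so your $\psi(x,y)$ is $\pi^B(y)\wedge(\tau(y)=\eps(x))$), takes the sequent $(\forall x)(\pi^A(x)\vdash(\exists!!y)\chi'(x,y))$, and verifies the equivalence with $h$-orthogonality via the same pullback/factorization analysis that you carry out through Lemma~\ref{pullback-validity}. The point you correctly isolate as the only substantial one is exactly what \cite[5.11]{RTe2} supplies under Assumption~\ref{assumption}.
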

\begin{proof}
	Let $h\colon A\to B$ be a morphism between $\lambda$-presentable $\mathbb L$-structures; then we can consider $\pi^A(x)$, $\pi^B(y)$, and $\chi'(x,y)= (\pi^A(x)\wedge\pi^B(y) \wedge (\tau(y)=\eps(x)))$ as in Lemma~\cite[5.11]{RTe2}. We shall prove that the sequent 
	$$ \tau_h:= (\forall x)(\pi^A(x)\vdash (\exists!! y)(\chi'(x,y)) )$$
	is an orthogonality sequent for $h$. 
	
	Let $K$ be any $\mathbb L$-structure; we can rewrite part of the diagram from the proof of \cite[5.11]{RTe2} as the solid diagram below.
	\begin{center}
		\begin{tikzpicture}[baseline=(current  bounding  box.south), scale=2]
			
			\node (a0) at (-0.3,2.1) {$\Str(\mathbb L)(B,K)\cong\pi^B_K$};
			\node (c0) at (-0.3,-0.8) {$\Str(\mathbb L)(A,K)\cong\pi^A_K$};
			
			\node (b0) at (3.6,2.1) {$K^{Z_B}$};
			\node (d0) at (3.6,0.9) {$K^{X}$};
			\node (e0) at (2.5,-0.1) {$K^{Z_A}$};
			
			\node (f0) at (2.5,1.1) {$K^{Z_A+Z_B}$};
			\node (g0) at (0.8,1.1) {$\chi'_K$};
			
			\node (l0) at (1.5,0.15) {(a)};

			\path[font=\scriptsize]
			
			(a0) edge [>->] node [above] {} (b0)
			(a0) edge [bend right,->] node [left] {$\pi^h_K:=\Str(\mathbb L)(h,K)$} (c0)
			(b0) edge [->] node [right] {$\tau_K$} (d0)
			(e0) edge [->] node [below] {$\ \ \ \ \ \ \ \quad K^e=\eps_K$} (d0)
			(c0) edge [bend right=20, >->] node [above] {$m$} (e0)
			
			(f0) edge [->] node [above] {} (b0)
			(f0) edge [->] node [above] {} (e0)
			
			(g0) edge [>->] node [above] {} (f0)
			(g0) edge [->] node [left] {$s\ \ $} (a0)
			(g0) edge [] node [right] {$\ \cong$} (a0)

			(g0) edge [bend left,dashed, <-] node [right] {$t$} (c0)
			(g0) edge [->] node [left] {$g'$} (c0);
		\end{tikzpicture}	
	\end{center}  
	where $g'\colon (\pi^B\wedge (\tau=\epsilon))_K\to \pi^A_K$ exists by the lemma. Then $K$ is orthogonal with respect to $h$ if and only if $\Str(\mathbb L)(h,K)$ is an isomorphism, if and only if $g'$ is an isomorphism, if and only if there exists a unique $t\colon \pi^A_K\to \chi'_K$ (namely, the inverse of $g'$) making the square (a) commute, if and only if $K$ satisfies $\tau_h$ (since by \cite[5.11]{RTe2} the square (a) with $t$ in place of $g'$ is a pullback).
\end{proof}

We now compare $\cv$-categories of models of limit theories with orthogonality classes. Recall that an object a {$\lambda$-orthogonality class} in a $\cv$-category $\ck$ is a full subcategory spanned by the objects orthogonal with respect to a set of morphisms with $\lambda$-presentable domain and codomain.

\begin{theo}\label{orth-limitT}
	Consider the following conditions for a full subcategory $\ck$ of $\Str(\LL)$:\begin{enumerate}
		\item $\ck\cong\Mod(\TT)$ for a limit $\mathbb L_{\lambda\lambda}$-theory $\TT$;
		\item $\ck$ is a $\lambda$-orthogonality class in $\Str(\LL)$.
	\end{enumerate}
	Then $(1)\Rightarrow (2)$ always holds, and $(2)\Rightarrow (1)$ holds under Assumption~\ref{assumption}.
\end{theo}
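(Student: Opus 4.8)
The plan is to establish the two implications separately. The implication $(2)\Rightarrow(1)$ is essentially a restatement of Proposition~\ref{ort}, while $(1)\Rightarrow(2)$ rests on the same pushout construction used in the proof of Proposition~\ref{mod<->inj}, combined with Lemma~\ref{pullback-validity}.

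For $(1)\Rightarrow(2)$ I would treat each axiom $(\forall x)(\varphi(x)\vdash(\exists!!y)(\psi(x,y)\wedge\varphi(x)))$ of $\TT$ in turn and produce a single morphism $h$ between $\lambda$-presentable $\LL$-structures whose orthogonality sequent is precisely that axiom. By \cite[Corollary~5.4]{RTe2}, choose $\lambda$-presentable $\LL$-structures $A$ and $C$ for which $\varphi(x)$ and $\psi(x,y)$ are presentation formulas, with associated maps $e\colon F(X)\to A$ and $e'\colon F(X+Y)\to C$; note that $F(X)$ and $F(X+Y)$ are $\lambda$-presentable since $X,Y\in\cv_\lambda$. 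Let $B$ be the pushout of $e$ along $e'\circ F(i_X)$, with coprojection $h\colon A\to B$; being a finite colimit of $\lambda$-presentable objects, $B$ is again $\lambda$-presentable. Applying $\Str(\LL)(-,K)$ to this pushout and unwinding the presentation formulas via Yoneda --- exactly as in the proof of Proposition~\ref{mod<->inj}, but with no disjunction present, so that no cone and no $(\ce,\cm)$-factorization intervene --- one identifies $\Str(\LL)(B,K)$ with the pullback of $\varphi_K\rightarrowtail K^X$ along $\psi_K\rightarrowtail K^{X+Y}\xrightarrow{K^{i_X}}K^X$, and $\Str(\LL)(h,K)$ with the corresponding projection onto $\varphi_K\cong\Str(\LL)(A,K)$. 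Hence $K$ is $h$-orthogonal if and only if this projection is invertible, which by Lemma~\ref{pullback-validity} holds if and only if $K\models(\varphi\vdash\exists!!y(\psi\wedge\varphi))$. Letting $h$ range over the morphisms obtained from all axioms of $\TT$ exhibits $\Mod(\TT)$ as a $\lambda$-orthogonality class in $\Str(\LL)$.

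For $(2)\Rightarrow(1)$, assume Assumption~\ref{assumption} and let $\ck$ be the full subcategory of $\Str(\LL)$ of objects orthogonal to a set $\ch$ of morphisms between $\lambda$-presentable $\LL$-structures. By Proposition~\ref{ort}, each $h\in\ch$ admits an orthogonality sequent $\tau_h$, and by its construction $\tau_h$ has the shape $(\forall x)(\varphi(x)\vdash(\exists!!y)(\psi(x,y)\wedge\varphi(x)))$ with $\varphi,\psi$ conjunctions of atomic formulas of $\mathbb L_{\lambda\lambda}$; that is, $\tau_h$ is a limit sequent of the required fragment. Then $\TT:=\{\tau_h\mid h\in\ch\}$ is a limit $\mathbb L_{\lambda\lambda}$-theory and, by the defining property of orthogonality sequents, $\Mod(\TT)=\ck$.

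I do not expect a genuine obstacle here: the content of $(2)\Rightarrow(1)$ is already packaged in Proposition~\ref{ort}, and $(1)\Rightarrow(2)$ only requires the bookkeeping identification of $\Str(\LL)(B,K)$ with the pullback appearing in Lemma~\ref{pullback-validity}, together with the routine check that the pushout $B$ remains $\lambda$-presentable so that the correspondence respects the arity bound $\lambda$. The one mildly delicate point is ensuring that the orthogonality sequents produced by Proposition~\ref{ort} genuinely lie in $\mathbb L_{\lambda\lambda}$, but this is immediate from that construction since the structures $A$ and $B$ involved are $\lambda$-presentable.
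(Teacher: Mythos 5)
Your proposal is correct and follows essentially the same route as the paper: $(2)\Rightarrow(1)$ is exactly the application of Proposition~\ref{ort}, and $(1)\Rightarrow(2)$ uses the same pushout of $e\colon F(X)\to A$ along $e'\circ F(i_X)$, the identification of $\Str(\LL)(B,K)$ with the relevant pullback via presentation formulas, and Lemma~\ref{pullback-validity}. Your extra remarks on the $\lambda$-presentability of the pushout and on the sequents lying in $\mathbb L_{\lambda\lambda}$ are correct bookkeeping that the paper leaves implicit.
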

\begin{proof}
	$(2)\Rightarrow (1)$ follows from Proposition~\ref{ort} above. For $(1)\Rightarrow(2)$ it is enough to prove that given any sequent
	$$(\forall x)(  \varphi(x) \vdash (\exists!!y)(\psi(x,y)\wedge\varphi(x))),$$
	where $\varphi$ and $\psi$ are conjunctions of atomic formulas, there exists a morphism $g\colon A\to B$ between $\lambda$-presentable $\mathbb L$-structures for which: $K$ is orthogonal with respect to $g$ if and only if $K\models \varphi\vdash \exists!!(\psi\wedge\varphi)$, for any $K\in\Str(\mathbb L)$.
	
	By \cite[Corollary~5.4]{RTe2} there exists $\lambda$-presentable $\mathbb L$-structures $A$ and $C$ for which $\varphi$ and $\psi$ are presentation formulas for $A$ and $C$ respectively; these come together with maps $e\colon F(X)\to A$ and $e'\colon F(X+Y)\to C$. Consider now the pushout $B$ (from the proof of \cite[7.6]{RTe2})
	\begin{center}
		\begin{tikzpicture}[baseline=(current  bounding  box.south), scale=2]

			\node (a0) at (0.1,0.9) {$FX$};
			\node (b0) at (1.2,0.9) {$A$};
			\node (c0) at (0.1,0) {$F(X+Y)$};
			\node (d0) at (1.2,0) {$C$};
			\node (1) at (2,0.45) {$B$};
			
			\path[font=\scriptsize]

			(a0) edge [->] node [left] {$F(i)$} (c0)
			(a0) edge [->] node [above] {$e$} (b0)
			(c0) edge [->] node [below] {$e'$} (d0)
			(b0) edge [->] node [above] {$\ g$} (1)
			(d0) edge [->] node [below] {$\ h$} (1);
		\end{tikzpicture}	
	\end{center} 
	By homming into an $\mathbb L$-structure $K$ and using the definition of presentation formula, we obtain the pullback diagram below.
	\begin{center}
		\begin{tikzpicture}[baseline=(current  bounding  box.south), scale=2]
			
			\node (0) at (-2.4,0.5) {$\Str(\LL)(B,K)\cong(\psi\wedge\varphi)_K$};
			\node (a0) at (0.1,1) {$\Str(\LL)(A,K)\cong \varphi_K$};
			\node (b0) at (1.9,1) {$K^X$};
			\node (c0) at (0.1,0) {$\Str(\LL)(C,K)\cong\psi_K$};
			\node (d0) at (1.9,0) {$K^{X+Y}$};
			
			\path[font=\scriptsize]
			
			(0) edge [->] node [above] {$\Str(\LL)(g,K)\ \ \ \ \ \ \ \ \ $} (a0)
			(0) edge [>->] node [below] {$\Str(\LL)(h,K)\ \ \ \ \ \ \ \ \ $} (c0)
			(a0) edge [>->] node [above] {} (b0)
			(c0) edge [>->] node [below] {} (d0)
			(d0) edge [->] node [right] {$K^i$} (b0)
			(a0) edge [dashed,->] node [right] {$t$} (c0);
		\end{tikzpicture}	
	\end{center} 
	Now, $K$ is orthogonal with respect to $g$ if and only if $\Str(\LL)(g,K)$ is an isomorphism, if and only if $\varphi_K\cong \Str(\LL)(B,K)$ is itself the pullback of the cospan 
	$$\varphi_K\rightarrow K^X\leftarrow \psi_K,$$ 
	if and only if $K$ satisfies the limit sequent.
\end{proof}

\begin{coro}
	For any limit $\mathbb L_{\lambda\lambda}$-theory $\TT$, the $\cv$-category $\Mod(\TT)$ is locally $\lambda$-presentable.
\end{coro}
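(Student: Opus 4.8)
The plan is to read this off from Theorem~\ref{orth-limitT} together with the enriched theory of orthogonal reflections. First, since $\TT$ is an $\mathbb L_{\lambda\lambda}$-theory the language $\mathbb L$ is $\lambda$-ary, so the $\cv$-category $\Str(\mathbb L)$ is locally $\lambda$-presentable (this is implicit throughout \cite{RTe2}, e.g. it underlies \cite[Corollary~5.4]{RTe2}). By the implication $(1)\Rightarrow(2)$ of Theorem~\ref{orth-limitT}, $\Mod(\TT)$ is a $\lambda$-orthogonality class in $\Str(\mathbb L)$; inspecting that proof, each of the (set-many) axioms of $\TT$ gives rise to a single morphism $g\colon A\to B$ between $\lambda$-presentable $\mathbb L$-structures, so $\Mod(\TT)$ is exactly the full subcategory of objects orthogonal to the resulting \emph{set} $\cs$ of such morphisms.

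It then suffices to invoke the fact that, inside a locally $\lambda$-presentable $\cv$-category, the orthogonality class of a set of morphisms with $\lambda$-presentable domains and codomains is reflective, closed under $\lambda$-filtered colimits, and again locally $\lambda$-presentable; this is the enriched orthogonal-reflection theorem, see Kelly~\cite{K} and~\cite{Kel82}. Concretely one checks the two ingredients by hand: closure under $\lambda$-filtered colimits holds because for $A,B\in\Str(\mathbb L)_\lambda$ the representables $\Str(\mathbb L)(A,-)$ and $\Str(\mathbb L)(B,-)$ preserve $\lambda$-filtered colimits, hence so does $\Str(\mathbb L)(g,-)$, and a $\lambda$-filtered colimit of isomorphisms in $\cv$ is an isomorphism; reflectivity follows from the (enriched) small-object argument applied to $\cs$ in the cocomplete $\cv$-category $\Str(\mathbb L)$, with the resulting reflector preserving $\lambda$-filtered colimits. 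A reflective, accessibly embedded full subcategory of a locally $\lambda$-presentable $\cv$-category is itself locally $\lambda$-presentable, which gives the claim.

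I do not anticipate a real obstacle; the only delicate point is the bookkeeping in the first paragraph --- confirming that the morphisms extracted from the proof of Theorem~\ref{orth-limitT} genuinely lie between $\lambda$-presentable $\mathbb L$-structures (which they do, using \cite[Corollary~5.4]{RTe2} and the fact that $\lambda$-presentable objects are closed under the finite colimit, namely the pushout, used there to build $B$), so that the same $\lambda$ can be used throughout and the cited enriched machinery applies verbatim. If one prefers to avoid quoting the orthogonal-reflection theorem, the same conclusion follows by combining the explicit closure properties just listed with \cite[Corollary~3.23]{LT23} as in the proof of Proposition~\ref{closure->accessible}, together with the reflectivity that completeness of $\Mod(\TT)$ provides.
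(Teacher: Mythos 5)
Your proposal is correct and follows essentially the same route as the paper: the paper's proof is exactly ``apply Theorem~\ref{orth-limitT} to realize $\Mod(\TT)$ as a $\lambda$-orthogonality class in the locally $\lambda$-presentable $\cv$-category $\Str(\mathbb L)$, then quote the enriched reflection theorem for orthogonality classes'' --- the only difference being that the paper cites \cite[Theorem~3.19]{Te} for that second step, where you instead cite Kelly and sketch the small-object/closure argument by hand.
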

\begin{proof}
	Follows from Theorem~\ref{orth-limitT} above and \cite[Theorem~3.19]{Te}.
\end{proof}

Recall from \cite{RTe2} the following notation that generalizes to this setting the ordinary way of interpreting unique existential quantification.

\begin{nota}
	We say that an $\LL$-structure $A$ satisfies a sequent of the form
	$$
	(\forall x)(  \varphi(x) \vdash (\exists !y)\psi(x,y)),
	$$
	if it satisfies
	$$
	(\forall x)(  \varphi(x) \vdash (\exists y)\psi(x,y)),
	$$
	and
	$$
	(\forall x)(\forall y)(\forall y')(  \varphi(x) \wedge \psi(x,y)\wedge\psi(x,y')\ \vdash \ y=y').
	$$
\end{nota}

The enriched unique existential quantification $\exists!!$ is stronger than the one just introduced:

\begin{lemma}\label{unique}
	If $A\models (\varphi\vdash \exists!!(\psi\wedge\varphi))$ then also $ A\models (\varphi\vdash \exists!(\psi\wedge\varphi))$.
\end{lemma}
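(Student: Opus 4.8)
The plan is to observe that the existence clause of $A\models(\varphi\vdash\exists!(\psi\wedge\varphi))$, namely $A\models(\varphi\vdash\exists y(\psi\wedge\varphi))$, is literally the existence clause of the hypothesis, so the whole statement reduces to checking the uniqueness sequent
$$(\forall x)(\forall y)(\forall y')\bigl(\varphi(x)\wedge(\psi(x,y)\wedge\varphi(x))\wedge(\psi(x,y')\wedge\varphi(x))\ \vdash\ y=y'\bigr);$$
writing $\chi$ for its antecedent, this amounts to the containment $\chi_A\subseteq(y=y')_A$ of $\cm$-subobjects of $A^{X+Y+Y}$. The only input required is that every map in $\cm$ is a monomorphism, so that the hypothesised diagonal $d:=\bigl((\psi\wedge\varphi)_A\rightarrowtail A^{X+Y}\xrightarrow{A^{i_X}}A^X\bigr)$, being in $\cm$, is monic.

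First I would set up notation: let $m\colon M:=(\psi\wedge\varphi)_A\rightarrowtail A^{X+Y}$ be the given $\cm$-subobject, so $d=A^{i_X}m$; using $A^{X+Y+Y}\cong A^X\times A^Y\times A^Y$ let $\pi_0,\pi_1,\pi_2$ be the projections, and put $q_1:=\langle\pi_0,\pi_1\rangle$ and $q_2:=\langle\pi_0,\pi_2\rangle\colon A^{X+Y+Y}\to A^{X+Y}$. Since the conjunct $\psi(x,y)\wedge\varphi(x)$ already implies $\varphi(x)$ (and likewise for $y'$), the interpretation of $\chi$ is the intersection $\chi_A\cong q_1^*M\cap q_2^*M$ in $A^{X+Y+Y}$; in particular, writing $j\colon\chi_A\rightarrowtail A^{X+Y+Y}$ for the inclusion, $\chi_A$ comes with two maps $u_1,u_2\colon\chi_A\to M$ satisfying $mu_1=q_1 j$ and $mu_2=q_2 j$.

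Next I would prove $u_1=u_2$: since $A^{i_X}q_1=\pi_0=A^{i_X}q_2$, both $du_1$ and $du_2$ equal $\pi_0 j$, and as $d$ is monic this forces $u_1=u_2$. Then $q_1 j=mu_1=mu_2=q_2 j$, and comparing the two $A^Y$-components of this equation (the $A^X$-components already agree) gives $\pi_1 j=\pi_2 j$; that is, $j$ equalizes the two projections $A^{X+Y+Y}\rightrightarrows A^Y$ corresponding to the variables $y$ and $y'$. Since $(y=y')_A$ is by definition the $\cm$-part of the $(\ce,\cm)$-factorization of that equalizer, $j$ factors through $(y=y')_A\rightarrowtail A^{X+Y+Y}$, which is exactly the required containment $\chi_A\subseteq(y=y')_A$. (Alternatively, this can be extracted from Lemma~\ref{pullback-validity}, which already yields $(\psi\wedge\varphi)_A\cong\varphi_A$ along $A^{i_X}$.)

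I do not anticipate any real difficulty beyond the coproduct-of-arities bookkeeping: one must check that the repeated copies of $\varphi$ in $\chi$ are redundant and that $A^{i_X}q_i=\pi_0$ and $q_i=\langle\pi_0,\pi_i\rangle$ behave as claimed under $A^{X+Y+Y}\cong A^X\times A^Y\times A^Y$. The argument uses neither Assumption~\ref{assumption} nor any hypothesis on $\ce$ --- only that $\cm$ consists of monomorphisms --- which reflects the fact that $\exists!!$ strengthens $\exists!$ precisely by upgrading ``the diagonal is monic'' to ``the diagonal lies in $\cm$''.
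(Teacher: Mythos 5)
Your proof is correct, and it rests on the same underlying fact as the paper's --- that the $\cm$-uniqueness clause makes the diagonal $(\psi\wedge\varphi)_A\to A^X$ a monomorphism --- but the execution is genuinely different and, if anything, cleaner. The paper forms the pullback $\psi'_A$ of $\varphi_A\to A^X\leftarrow\psi_A$, asserts (as ``a close inspection'') that the uniqueness sequent holds iff $\psi'_A\to\varphi_A$ is monic, and then invokes Lemma~\ref{pullback-validity} to get that this map is an isomorphism; so it uses the full strength of $\exists!!$. You instead verify the containment $\chi_A\subseteq(y=y')_A$ directly in $A^{X+Y+Y}$, using only the $\cm$-uniqueness clause (the diagonal lies in $\cm$, hence is monic) and not the existence clause or Lemma~\ref{pullback-validity} at all; this both fills in the step the paper leaves implicit and isolates exactly which half of the hypothesis drives the uniqueness conclusion. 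One small caveat: your identification $\chi_A\cong q_1^*M\cap q_2^*M$ is really a weakening/substitution statement, and since $\ce$ is not assumed pullback-stable the interpretation of a weakened atomic formula need only be \emph{contained in} the pullback of its interpretation (the image of a pulled-back equalizer factors through the pullback of the image, by orthogonality); but the containment $\chi_A\subseteq q_i^*M$ is all your argument uses, so nothing breaks --- you should just state it as a containment rather than an isomorphism.
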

\begin{proof}
	Consider the solid part of the diagram below.
	\begin{center}
		\begin{tikzpicture}[baseline=(current  bounding  box.south), scale=2]
			
			\node (-1) at (-1.1,1.2) {$\psi'_A$};
			\node (a0) at (-0.1,0.6) {$\psi_A$};
			\node (c0) at (2,0.6) {$ A^{X+Y} $};
			\node (c1) at (2,1.8) {$ A^X$};
			\node (d0) at (1,1.2) {$(\exists y)(\psi\wedge\varphi)_A$};
			\node (0) at (-0.1,1.8) {$\varphi_A$};
			
			\path[font=\scriptsize]
			
			(a0) edge [>->] node [below] {} (c0)
			(c0) edge [->] node [right] {$A^i$} (c1)
			(a0) edge [->>] node [below] {$\ q$} (d0)
			(d0) edge [>->] node [above] {} (c1)
			(0) edge [>->] node [below] {} (c1)
			(0) edge [dashed,->] node [left] {$t$} (a0)
			(0) edge [dashed,->] node [above] {$\ s$} (d0)
			(-1) edge [->] node [above] {$m\ $} (0)
			(-1) edge [>->] node [below] {$n\ $} (a0);
		\end{tikzpicture}	
	\end{center}
	where $\psi'_A$ is the pullback of the cospan $\varphi_A\rightarrow A^X\leftarrow \psi_A$. A close inspection on the validity of
	$$(\forall x)(\forall y)(\forall y') \big(\varphi(x)\wedge\psi(x,y)\wedge\psi(x,y')\big) \vdash (y=y')$$
	says that $A$ satisfies it if and only if $m\colon\psi'_A\to\varphi_A$ is a monomorphism.
	
	Now, if $A$ satisfies the sequent $(\varphi\vdash \exists!!\psi)$ them $m$ is an isomorphism by Lemma~\ref{pullback-validity}; thus we can conclude.
\end{proof}


In the following situation limit theories can be captured under regular logic:

\begin{propo}
	Assume that $(\ce,\cm)$=(strong epimorphism, monomorphism) and that strong epimorphisms are pullback stable (this is equivalent to $\cv$ being regular). Then an $\LL$-structure $K$ satisfies the limit sequent 
	$$(\forall x)(  \varphi(x) \vdash (\exists!! y) (\psi(x,y)\wedge\varphi(x)))$$
	if and only if it satisfies $(\forall x)(  \varphi(x) \vdash (\exists! y) (\psi(x,y)\wedge\varphi(x)))$. That is, if and only if $K$ satisfies
	$$(\forall x)(  \varphi(x) \vdash (\exists y)(\psi(x,y)\wedge\varphi(x))),$$
	$$(\forall x)(\forall y)(\forall y') \big(\varphi(x)\wedge\psi(x,y)\wedge\psi(x,y')\big) \vdash (y=y').$$
\end{propo}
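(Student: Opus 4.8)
The plan is to collapse the whole statement to a single question about one comparison morphism. By Lemma~\ref{pullback-validity} (and its proof) $K\models(\varphi\vdash \exists!!y(\psi\wedge\varphi))$ is equivalent to asking that the canonical morphism $m\colon(\psi\wedge\varphi)_K\to\varphi_K$ out of the pullback of $\varphi_K\rightarrowtail K^X$ along $\psi_K\rightarrowtail K^{X+Y}\xrightarrow{K^{i_X}}K^X$ is an isomorphism. So it suffices to show that $K\models(\varphi\vdash \exists!y(\psi\wedge\varphi))$ if and only if this same $m$ is an isomorphism, and then to use that for any orthogonal factorization system $\ce\cap\cm$ consists exactly of the isomorphisms.

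First I would treat the existence clause. Since $\varphi_K\rightarrowtail K^X$ lies in $\cm$ and $\cm$ is closed under composition, the $(\ce,\cm)$-factorization of the composite $(\psi\wedge\varphi)_K\xrightarrow{m}\varphi_K\rightarrowtail K^X$ has its $\cm$-part equal to the image of $m$; hence $(\exists y(\psi\wedge\varphi))_K$ is exactly that image (this is essentially the epimorphism $q$ appearing in the proof of Lemma~\ref{unique}), and $K\models(\varphi\vdash\exists y(\psi\wedge\varphi))$ holds precisely when the image of $m$ is all of $\varphi_K$, i.e.\ when $m$ is a strong epimorphism. Next I would treat the uniqueness equation: as computed in the proof of Lemma~\ref{unique}, the interpretation of $\varphi(x)\wedge\psi(x,y)\wedge\psi(x,y')$ in $K$ is the kernel pair of $m$ (two elements of $(\psi\wedge\varphi)_K$ with the same image in $\varphi_K\rightarrowtail K^X$), so the sequent $(\forall x)(\forall y)(\forall y')\big(\varphi(x)\wedge\psi(x,y)\wedge\psi(x,y')\vdash y=y'\big)$ asks precisely that this kernel pair be the diagonal, equivalently that $m$ be a monomorphism. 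Combining, $K\models(\varphi\vdash\exists!y(\psi\wedge\varphi))$ iff $m$ is simultaneously a strong epimorphism and a monomorphism, i.e.\ $m\in\ce\cap\cm=\Iso$; so this is equivalent to $m$ being an isomorphism, hence to $K\models(\varphi\vdash\exists!!y(\psi\wedge\varphi))$. (One direction, $\exists!!\Rightarrow\exists!$, is already Lemma~\ref{unique}, and falls out of the same analysis.)

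The only delicate points are the two identifications in the middle step --- that the image of $m$ computes $\exists y(\psi\wedge\varphi)$ and that the kernel pair of $m$ computes the triple conjunction $\varphi(x)\wedge\psi(x,y)\wedge\psi(x,y')$ --- but both are essentially carried out inside the proofs of Lemmas~\ref{pullback-validity} and~\ref{unique}, so the real work here is bookkeeping around the relevant pullback and factorization squares. I expect the main thing to watch is the precise role of the standing hypotheses: for this satisfaction statement one really only uses $\cm=\Mono$ (so that ``$\cm$-uniqueness'' of the diagonal is just monicity of $m$) together with $\ce\cap\cm=\Iso$; pullback-stability of the strong epimorphisms is what additionally guarantees that the rewritten sequent $(\forall x)(\varphi\vdash\exists!y(\psi\wedge\varphi))$ genuinely lives in the enriched regular fragment (existential quantification passing along conjunctions), which is presumably why the result is recorded under the regularity assumption rather than in this bare form.
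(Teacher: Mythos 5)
Your proof is correct, and it reaches the conclusion by a genuinely different route in the crucial direction. Writing $m\colon(\psi\wedge\varphi)_K\to\varphi_K$ for the pullback projection, both you and the paper reduce everything to showing $m$ is an isomorphism (via Lemma~\ref{pullback-validity}) and both extract ``$m$ is a monomorphism'' from the uniqueness sequent as in Lemma~\ref{unique}. The difference is in how epimorphy is obtained: the paper takes the inclusion $s\colon\varphi_K\to(\exists y(\psi\wedge\varphi))_K$ provided by the existence clause and exhibits $m$ as a pullback of the $\ce$-part $q$ of the image factorization along $s$, so that $m\in\ce$ \emph{by pullback-stability of strong epimorphisms}. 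You instead observe that, since $\varphi_K\rightarrowtail K^X$ is in $\cm$, the interpretation $(\exists y(\psi\wedge\varphi))_K$ is literally the $\cm$-image of $m$ inside $\varphi_K$; the existence clause then says this image is all of $\varphi_K$, i.e.\ $m\in\ce$ outright, with no stability needed. Your version is cleaner, gives a symmetric ``existence $\Leftrightarrow$ $m\in\ce$, uniqueness $\Leftrightarrow$ $m$ mono, $\ce\cap\cm=$ iso'' decomposition, and shows the equivalence really only uses $(\ce,\cm)=(\text{strong epi},\text{mono})$. One small correction to your closing remark: the pullback-stability hypothesis is not there to make the rewritten sequents regular --- they are syntactically regular sequents (positive-primitive on both sides) in any case --- it is simply what the paper's own converse argument consumes; your argument dispenses with it.
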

\begin{proof}
	Necessity follows from \ref{unique}.
	
	Conversely consider the diagram in the proof of \ref{unique} and assume that $m$ is a monomorphism and that $s\colon\varphi_K\to (\exists y)(\psi\wedge\varphi)_K$ exists. Then $\psi'_K$ is the pullback of $s$ along $q$; so that $m$ is also a strong epimorphism. Thus $m$ is an isomorphism and $K$ satisfies the limit sequent by Lemma~\ref{pullback-validity}.
\end{proof}

\begin{propo}
	Suppose that the assumptions of \cite[5.8]{RTe1} hold. Then every $\lambda$-orthogonality class is a $\lambda$-injectivity class in $\Str(\mathbb L)$. In particular every $\cv$-category of models of a limit theory is the $\cv$-category of models of a regular theory.
\end{propo}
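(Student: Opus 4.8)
The strategy is to reduce the first assertion to the general categorical fact \cite[5.8]{RTe1}, applied to the locally presentable $\cv$-category $\Str(\mathbb L)$, and then to combine the resulting statement with Theorem~\ref{orth-limitT} and with \cite[Theorem~7.6]{RTe2} in order to obtain the consequence about regular theories.

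For the first assertion, the mechanism underlying \cite[5.8]{RTe1} is the classical trade of orthogonality for injectivity via codiagonals. Given $h\colon A\to B$ between $\lambda$-presentable $\mathbb L$-structures, form the pushout $P:=B+_AB$ in $\Str(\mathbb L)$, together with its codiagonal $\nabla_h\colon P\to B$; since $P$ is a finite colimit of $\lambda$-presentable objects it is again $\lambda$-presentable, so $\nabla_h$ too is a morphism between $\lambda$-presentable structures. Each representable $\Str(\mathbb L)(-,K)$ is continuous, hence sends the pushout square to the pullback square realizing $\Str(\mathbb L)(P,K)\cong\Str(\mathbb L)(B,K)\times_{\Str(\mathbb L)(A,K)}\Str(\mathbb L)(B,K)$, and under this identification $\Str(\mathbb L)(\nabla_h,K)$ is precisely the kernel-pair diagonal of $\Str(\mathbb L)(h,K)$, in particular a split monomorphism. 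Since the isomorphisms of $(\ce,\cm)$ are exactly $\ce\cap\cm$, and since under the hypotheses of \cite[5.8]{RTe1} a split monomorphism lying in $\ce$ is invertible, one gets that $K$ is $h$-orthogonal, i.e.\ $\Str(\mathbb L)(h,K)$ is an isomorphism, if and only if both $\Str(\mathbb L)(h,K)$ and $\Str(\mathbb L)(\nabla_h,K)$ lie in $\ce$, i.e.\ if and only if $K$ is injective to $h$ and to $\nabla_h$. Thus the $\lambda$-orthogonality class determined by a set $\{h_i\}_{i\in I}$ of such morphisms coincides with the $(\lambda,\ce)$-injectivity class determined by $\{h_i\}_{i\in I}\cup\{\nabla_{h_i}\}_{i\in I}$; this applies since $\Str(\mathbb L)$ is a locally presentable $\cv$-category equipped with $(\ce,\cm)$ (see \cite{RTe2}), and it is the first assertion.

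For the ``in particular'', let $\ck\cong\Mod(\TT)$ with $\TT$ a limit $\mathbb L_{\lambda\lambda}$-theory. By the implication $(1)\Rightarrow(2)$ of Theorem~\ref{orth-limitT}, which holds unconditionally, $\ck$ is a $\lambda$-orthogonality class in $\Str(\mathbb L)$; by the first part it is therefore a $(\lambda,\ce)$-injectivity class in $\Str(\mathbb L)$; and by \cite[Theorem~7.6]{RTe2} every $(\lambda,\ce)$-injectivity class of $\mathbb L$-structures is $\Mod(\TT')$ for a regular $\mathbb L_{\lambda\lambda}$-theory $\TT'$. Composing these identifications yields $\ck\cong\Mod(\TT')$, as required.

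The part I expect to cost the most care is organizational rather than conceptual. First, one must check that the codiagonal construction is compatible with the relational structure of $\Str(\mathbb L)$ — that pushouts there preserve $\lambda$-presentability and that $\Str(\mathbb L)(\nabla_h,-)$ is genuinely the kernel-pair diagonal of $\Str(\mathbb L)(h,-)$ — which follows from continuity of the representables and from colimits in $\Str(\mathbb L)$ being computed out of colimits in $\cv$, but deserves to be spelled out. Second, and more substantively, one has to make sure that the hypotheses in force are exactly those of \cite[5.8]{RTe1}, in particular whatever condition on $(\ce,\cm)$ guarantees that a split-monic map in $\ce$ is an isomorphism, since this is the one step that truly uses the shape of the factorization system. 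Finally, the last link of the chain leans on \cite[Theorem~7.6]{RTe2}; if that theorem is invoked under Assumption~\ref{assumption}, then the same hypothesis should be assumed here, or shown to follow from those of \cite[5.8]{RTe1}.
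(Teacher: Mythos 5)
Your proposal follows the same route as the paper: the paper's proof of the first assertion is literally the citation of \cite[Corollary~5.8]{RTe1}, and the second assertion is obtained exactly as you describe, by chaining Theorem~\ref{orth-limitT} (direction $(1)\Rightarrow(2)$, which holds unconditionally) with \cite[Theorem~7.6]{RTe2}. Your closing remark that the last link imports Assumption~\ref{assumption} is a fair observation about the bookkeeping.

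The one point to flag is in your reconstruction of the codiagonal argument behind \cite[5.8]{RTe1}, which the paper does not reproduce. The condition you isolate --- that a split monomorphism in $\ce$ is invertible --- only disposes of $\Str(\mathbb L)(\nabla_h,K)$. Once that map is known to be an isomorphism, what you have established about $\Str(\mathbb L)(h,K)$ is that it is a \emph{monomorphism} lying in $\ce$, and concluding that it is invertible requires the strictly stronger condition $\ce\cap\Mono=\Iso$. This is not automatic for the factorization systems used in this paper: for $(\ce,\cm)=(\text{dense},\text{closed isometry})$ on $\Met$, a dense isometric inclusion such as $\mathbb Q\hookrightarrow\mathbb R$ is a monomorphism in $\ce$ that is not invertible, so the backward implication of your equivalence does not close as written. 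The hypotheses of \cite[5.8]{RTe1} must be supplying exactly this stronger condition (compare the preceding proposition in this section, which works under $(\ce,\cm)=(\text{strong epi},\text{mono})$); since the statement simply assumes those hypotheses, your argument is salvageable, but the condition you name is not the one actually doing the work in the final step.
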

\begin{proof}
	The first part follows from \cite[Corollary~5.8]{RTe1}. The second by~\cite[Theorem~7.6]{RTe2} and Theorem~\ref{orth-limitT} above.
\end{proof}

\end{document}